\newcommand\N{\mathbb N}
\newcommand\R{\mathbb R}
\newcommand\G{\mathscr G}
\newcommand\PP{\mathbb P}
\newcommand\E{\mathbb E}
\newcommand\T{\mathbb T}
\newcommand\F{\mathscr F}
\newcommand\X{\mathbf X}
\newcommand\Ind{\boldsymbol 1}
\DeclarePairedDelimiter{\ceil}{\lceil}{\rceil}
\DeclarePairedDelimiter{\floor}{\lfloor}{\rfloor}
\DeclarePairedDelimiterX{\infdivx}[2]{(}{)}{%
  #1\;\delimsize|\delimsize|\;#2%
}
\newcommand{\kld}[2]{\ensuremath{\mathbf{d}\infdivx{#1}{#2}}\xspace}
\newcommand{\PR}[1]{\PP\left(#1\right)}
\newcommand{\cPR}[2]{\PP\left(#1 \,\middle|\, #2\right)}
\newcommand{\EX}[1]{\E\left[#1\right]}
\newcommand{\cEX}[2]{\E\left[#1 \,\middle|\, #2 \right]}
\DeclareMathOperator*{\esssup}{ess\,sup}
\DeclareMathOperator*{\essinf}{ess\,inf}
\DeclarePairedDelimiterX\Ai[1]{\text{Ai}(}{)}{#1}
\theoremstyle{plain}
\newtheorem{theorem}{Theorem}[section]
\newtheorem*{theorem*}{Theorem}
\newtheorem{lemma}[theorem]{Lemma}
\newtheorem*{lemma*}{Lemma}
\newtheorem{corollary}[theorem]{Corollary}
\newtheorem*{corollary*}{Corollary}
\newtheorem{proposition}[theorem]{Proposition}
\newtheorem*{proposition*}{Proposition}
\newtheorem{fact}[theorem]{Fact}
\newtheorem*{fact*}{Fact}
\newtheorem{problem}[theorem]{Problem}
\newtheorem*{problem*}{Problem}
\theoremstyle{definition}
\newtheorem{definition}[theorem]{Definition}
\newtheorem{definition*}[theorem]{Definition}
\theoremstyle{remark}
\newtheorem{remark}[theorem]{Remark}
\newtheorem*{remark*}{Remark}
\numberwithin{equation}{section}
\begin{document}

\title{Sampling from the Gibbs measure of the continuous random energy model and the hardness threshold}
\author{
Fu-Hsuan~Ho\thanks{
  Institut de Math\'{e}matiques de Toulouse, CNRS UMR5219.
  \textit{Postal address:} Institut de Math\'{e}matiques de Toulouse,
  Universit\'{e} Toulouse 3 Paul Sabatier,
  118 Route de Narbonne, 31062 Toulouse Cedex 9, France.
  \textit{Email:} \texttt{fu-hsuan.ho AT math.univ-toulouse.fr}
  }
}
\date{\today}
\maketitle
\begin{abstract}
The continuous random energy model (CREM) is a toy model of disordered systems introduced by Bovier and Kurkova in 2004 based on previous work by Derrida and Spohn in the 80s. In a recent paper by Addario-Berry and Maillard, they raised the following question: what is the threshold $\beta_G$, at which sampling approximately the Gibbs measure at any inverse temperature $\beta>\beta_G$ becomes algorithmically hard? Here, sampling approximately means that the Kullback--Leibler divergence from the output law of the algorithm to the Gibbs measure is of order $o(N)$ with probability approaching $1$, as $N\rightarrow\infty$, and algorithmically hard means that the running time, the numbers of vertices queries by the algorithms, is beyond of polynomial order.

The present work shows that when the covariance function $A$ of the CREM is concave, for all $\beta>0$, a recursive sampling algorithm on a renormalized tree approximates the Gibbs measure with running time of order $O(N^{1+\varepsilon})$. For $A$ non-concave, the present work exhibits a threshold $\beta_G<\infty$ such that the following hardness transition occurs: a) For every $\beta\leq \beta_G$, the recursive sampling algorithm approximates the Gibbs measure with running time of order $O(N^{1+\varepsilon})$. b) For every $\beta>\beta_G$, a hardness result is established for a large class of algorithms. Namely, for any algorithm from this class that samples the Gibbs measure approximately, there exists $z>0$ such that the running time of this algorithm is at least $e^{zN}$ with probability approaching $1$. In other words, it is impossible to sample approximately in polynomial-time the Gibbs measure in this regime.
  
Additionally, we provide a lower bound of the free energy of the CREM that could hold its own value.

\paragraph{Keywords:} algorithmic hardness; continuous random energy model; Gaussian process; Gibbs measure; Kullback--Leibler divergence; spin glass.
 
\paragraph{MSC2020 subject classifications:} 68Q17, 82D30, 60K35, 60J80. 
\end{abstract}

\section{Introduction} \label{sec:intro}

The continuous random energy model (CREM) is a toy model of a disordered system in statistical physics, i.e.~a model where the Hamiltonian -- the function that assigns energies to the states of the system -- is itself random. The CREM was introduced by Bovier and Kurkova~\cite{CREM04} based on previous work by Derrida and Spohn~\cite{DerridaSpohn}. 
Mathematically, the model is defined as follows. For a given integer $N\in\N$, the CREM is a centered Gaussian process $\X = (X_u)_{u\in\T_N}$ indexed by the binary tree $\T_N$ of depth $N$ with covariance function
\[\EX{X_vX_w} = N\cdot A\left(\frac{\abs{v\wedge w}}{N}\right), \quad \forall v,w\in \T_N.\]
Here, $\abs{v\wedge w}$ is the depth of the most recent common ancestor of $v$ and $w$, and the function $A$ is assumed to be a non-decreasing function defined on an interval $[0,1]$ such that $A(0)=0$ and $A(1)=1$. An essential quantity of this model is the Gibbs measure, which is a probability measure defined on the set of leaves $\partial\T_N$ where the weight of $v\in\partial\T_N$ is proportional to $e^{\beta X_v}$.

The present work consider the sampling problem of the Gibbs measure. We say that a (randomized) algorithm approximates the Gibbs measure if the Kullback--Leibler divergence from the output law of this algorithm to the Gibbs measure is of order $o(N)$ with probability approaching $1$.
The present work considers a recursive sampling algorithm that is similar to the one appearing in \cite{AB&M20} and \cite{KL_ejp22}. We shows that when the covariance function $A$ of the CREM is concave, for all $\beta>0$, the recursive sampling algorithm approximates the Gibbs measure with running time of order $O(N^{1+\varepsilon})$. Moreover, when $A$ is non-concave, we identify a threshold $\beta_G<\infty$ such that the following hardness transition occurs: a) For every $\beta\leq \beta_G$, the recursive sampling algorithm approximates the Gibbs measure with running time of order $O(N^{1+\varepsilon})$.
b) For every $\beta>\beta_G$, we prove a hardness result for a generic class of algorithms. Namely, there exists $\gamma>0$ such that for any algorithm in this class that approximates the Gibbs measure, the running time of this algorithm is at least $e^{\gamma N}$ with probability approaching $1$. 

\subsection{Definitions and notation} \label{sec:def}

Throughout this paper, we denote by $\mathbb{N}=\{1,2,\cdots\}$ the set of positive integer. For each pair of integers $n$ and $m$ such that $n\leq m$, we denote by $\llbracket n,m\rrbracket$ the set of integers between $n$ and $m$.

\paragraph{Binary tree.} 
Fixing $N\in\N$, we denote by $\T_N=\{\varnothing\}\cup\bigcup_{n=1}^N\{0,1\}^n$ the binary tree rooted at $\varnothing$. The depth of a vertex $v\in\T_N$ is denoted by $\abs{v}$. For any $v,w\in\T_N$, we write $v\leq w$ if $v$ is a prefix of $w$ and write $v<w$ if $v$ is a prefix of $w$ strictly shorter than $w$. In the following, for any $v\in\T_N$, we refer to any vertex $w$ with $w\leq v$ as an ancestor of $v$. For any $v\in\T_N$ and $n\in \llbracket 0,\abs{v}\rrbracket$, define $v[n]$ to be the ancestor of $v$ of depth $n$. 
For all $v,w\in \T_N$, we denote by $v\wedge w$ the most recent common ancestor of $v$ and $w$.
We denote by $\partial\T_N$ the set of leaves of $\T_N$, and for any $v\in\T_N$, let $\T^v_n$ be the subtree of $\T_N$ rooted at $v$ with depth $n$.

\paragraph{Continuous random energy model.} Let $A$ be a non-decreasing function defined on an interval $[0,1]$ such that $A(0)=0$ and $A(1)=1$. For the sake of this paper, we assume that there exists a bounded Riemann integrable function $a$ such that $A$ for all $t\in [0,1]$,
\begin{align*}
A(t) = \int_0^t a(s) \dd{s}.
\end{align*}
We denote by $\hat{A}$ the concave hull of $A$ (see Figure~\ref{fig:CREM illustration}) and by $\hat{a}$ the right derivative of $\hat{A}$. Note that the $\hat{A}$ is also equals to the Riemann integral of $\hat{a}$, i.e., for all $t\in [0,1]$,
\begin{align*}
\hat{A}(t) = \int_0^t \hat{a}(s) \dd{s}.
\end{align*}

We now introduce the continuous random energy model (CREM). See Figure~\ref{fig:CREM illustration} for an illustration.
\begin{definition}
\label{def:CREM}
Given $N\in\N$, the continuous random energy model (CREM) is a centered Gaussian process $\X = (X_u)_{u\in\T_N}$ indexed by the binary tree $\T_N$ of depth $N$ with covariance function
\begin{align}
\EX{X_vX_w} = N\cdot A\left(\frac{\abs{v\wedge w}}{N}\right), \quad \forall v,w\in \T_N, \label{eq:cov of CREM}
\end{align}
where $\abs{v\wedge w}$ is the depth of the most recent common ancestor of $v$ and $w$.
\end{definition}

\begin{figure}[ht]
\centering
\begin{minipage}{0.6\textwidth}
\centering
\includegraphics[width=0.9\textwidth]{./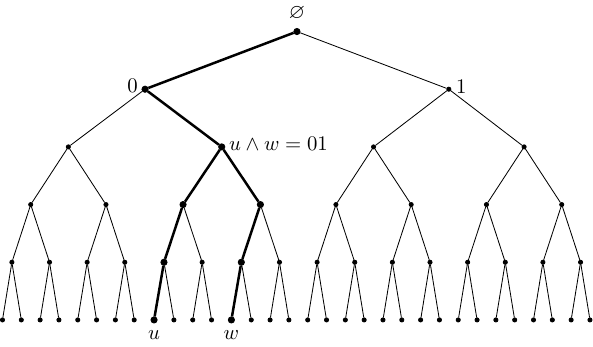}
\end{minipage}
\begin{minipage}{0.35\textwidth}
\centering
\includegraphics[width=0.85\textwidth]{./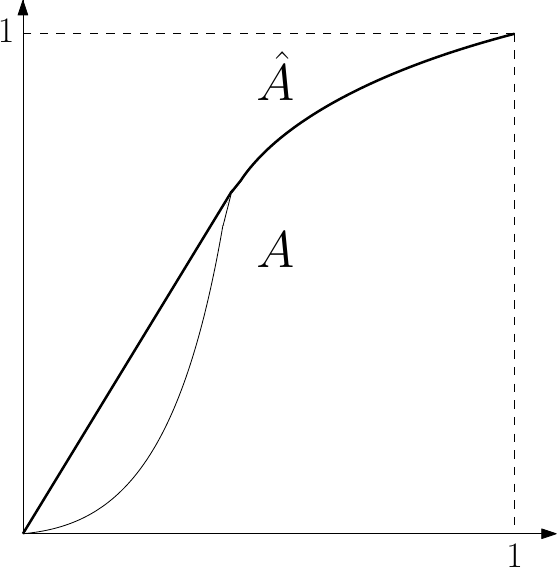}
\end{minipage}
\caption{The covariance function of a CREM is determined by the underlying binary tree (left) and a function $A$ (right). The concave hull $\hat{A}$ of the function $A$ is also shown on the right, which determines the CREM free energy and the asymptotics of the maximum of the CREM.}
\label{fig:CREM illustration}
\end{figure}

Throughout this paper, we consider a sequence of CREM $(\mathbf{X}_N)_{N\in\N}$ defined on the same underlying probability space. For simplicity, we drop $N$ as long as it causes no ambiguity. 

\paragraph{Branching property.} The CREM can be viewed as an inhomogeneous binary branching random walk with Gaussian increments. In particular, it has the following branching property: let $(\F_k)_{k=0}^N$ be the natural filtration of the CREM. For any $u\in\T_N$ with $\abs{u}=n\in \llbracket 0,N\rrbracket$, call the process 
\begin{align*}
\mathbf{X}^u = (X^u_w)_{uw\in\T^u_{N-n}} 
\end{align*}
the CREM indexed by the subtree $\T^u_{N-n}$, where $X^u_w=X_{uw}-X_{u}$. For any $n\in\llbracket 0,N\rrbracket$, let $\mathbf{X}^{(n)}=(X^{(n)}_u)_{u\in\T_{N-n}}$ be a centered Gaussian process with covariance function 
\begin{align*}
\EX{X^{(n)}_{w_1}X^{(n)}_{w_2}} = N \cdot A\left(\frac{n+\abs{w_1\wedge w_2}}{N}\right), \quad \forall w_1,w_2\in\T_{N-n}.
\end{align*}
Then, the branching property states that collection of processes $\{\mathbf{X}^u:\abs{u}=n\}$ are independent and have the identical distribution of $\mathbf{X}^{(n)}$, and they are independent of $\F_n$. 


\paragraph{Partition function and Gibbs measure.} Given a subtree $\T_M^{v}$ rooted at $v$ and of depth $M\in \llbracket0, N-\abs{v}\rrbracket$, the \emph{Gibbs measure with inverse temperature $\beta>0$} is defined by 
\begin{align}
\mu_{\beta,M}^v(u) = \frac{1}{Z_{\beta,M}^v}e^{\beta X_u^v}, \quad \forall vu\in\T_M^v, \label{eq:Gibbs}
\end{align}
where 
\begin{align}
Z_{\beta,M}^v &= \sum_{vu\in\T_M^v} e^{\beta X_u^v}, \label{eq:par func}
\end{align}
is the partition function on the subtree $\T_M^v$. In particular, we adopt the conventions 
\begin{align*}
\mu_{\beta,M}=\mu_{\beta,M}^\varnothing \quad \text{and} \quad  Z_{\beta,M} = Z_{\beta,M}^\varnothing
\end{align*}
for any $M\in \llbracket 0,N\rrbracket$. For completeness, we also define $Z^{(n)}_{\beta,M} = \sum_{\abs{u}=M} e^{\beta X^{(n)}_u}$. 
\paragraph{Free energy and its lower bound.} For $v\in\T_N$, we refer to the logarithm of the partition function $\log Z_{\beta,M}^v$ as the free energy on the subtree $\T_M^v$. The free energy $F_\beta$ of the CREM is defined as follows, and $F_\beta$ admits an explicit expression.
\begin{align}
F_\beta \coloneqq \lim_{N\rightarrow\infty} \frac{1}{N}\EX{\log Z_{\beta,N}} = \int_0^1 f(\beta\sqrt{\hat{a}(s)})\dd{s}, \label{def:Fe}
\end{align}
where the function $f$ is defined as
\begin{align}
f(\beta)
=
\begin{cases}
\displaystyle
\log 2 + \frac{\beta^2}{2}, & \beta < \sqrt{2\log 2} \\
\\
\sqrt{2\log 2} \beta, & \beta \geq \sqrt{2\log 2}.
\end{cases}
\label{eq:BRW Fe}
\end{align}
For completeness, we include the proof of \eqref{def:Fe} in Fact~\ref{fact:Fe}.
When clear, we also simply refer to $F_\beta$ as the free energy. We introduce a related quantity $\tilde{F}_{\beta}$ defined as
\begin{align}
\tilde{F}_\beta \coloneqq \int_0^1 f(\beta\sqrt{a(s)})\dd{s}. \label{eq:tilde F_beta}
\end{align}
In Proposition~\ref{prop:FE comparison}, we show that $F_\beta\geq \tilde{F}_\beta$ and characterize the condition where the equality holds.

\paragraph{Algorithms.}
We follow the same definition of randomized algorithms as in \cite{AB&M20,KL_ejp22}, which also appeared in similar forms in \cite{PemantleSearch09}.
\begin{definition}[Algorithm]
\label{def:algor}
Let $N\in\N$. Let $\tilde{\F}_k$ be a filtration defined by 
\[
\tilde{\F}_k = \sigma\left(v(1),\ldots,v(k);\,X(v(1)),\ldots,X(v(k));\,U_1,\ldots,U_{k+1}\right)
\]
where $(U_k)_{k\geq 1}$ is a sequence of i.i.d. uniform random variables on $[0,1]$, independent of the continuous random energy model $\mathbf{X}$.
A random sequence $\mathrm{v}=(v(k))_{k\geq 0}$ taking values in $\mathbb{T}_N$ is called a \emph{(randomized) algorithm} if $v(0) = \varnothing$ and $v(k+1)$ is $\tilde{\F}_k$-measurable for every $k\geq 0$. We further suppose that there exists a stopping time $\tau$ with respect to the filtration $\tilde{\F}$ and such that $v(\tau) \in \partial \T_N$. We call $\tau$ the \emph{running time} and $v(\tau)$ the \emph{output} of the algorithm. The \emph{law of the output} is the (random) distribution of $v(\tau)$, conditioned on $\mathbf{X}$.
\end{definition}

\begin{remark}
Roughly speaking, the filtration $\tilde{\F} = (\tilde{\F}_k)_{k\geq 0}$ contains all the  information about everything the algorithm has queried so far, as well as the additional randomness needed to choose the next vertex.
\end{remark}

Throughout the paper, the notion of time complexity is given by the following definition.

\begin{definition}[Time complexity]
\label{def:time complexity}
Let $(\tau_N)$ be a sequence of running time corresponds to a sequence of algorithms indexed by $N$. Let $h:\mathbb{N}\rightarrow\mathbb{N}$ be a function. We say that the sequence of running times is of order $O(h(N))$ if almost surely, there exists $N_0\in\N$ such that $\tau_N \leq h(N)$. We say the running time is of polynomial order if there exists a polynomial $P(N)$ such that almost surely, there exists $N_0\in\N$.
\end{definition}

\begin{remark}
In the rest of the paper, when we say that the running time of an algorithm is of order $O(h(N))$, we implicitly assume that there is an underlying sequence of algorithms indexed by $N$, which we also refer to as an algorithm by abuse of notation. 
\end{remark}

\paragraph{Kullback--Leibler divergence.} Given two probability measures $P$ and $Q$ defined on a discrete space $\Omega$, the Kullback--Leibler divergence (also known as the relative entropy) from $Q$ to $P$ is defined by 
\begin{align} 
\label{def:KL}
\kld{P}{Q} &= \sum_{\omega\in \Omega} P(\omega)\cdot\log\left(\frac{P(\omega)}{Q(\omega)}\right).
\end{align}
From now on, we abbreviate the Kullback--Leibler divergence as the KL divergence.
\par
The notion of approximation in the present work is the following.
\begin{definition}
\label{def:approximation}
Let $(P_N)_{N\in\N}$ and $(Q_N)_{N\in\N}$ are two sequences of \emph{random} probability measures defined on a discrete space $\Omega$. We say that the sequence $(P_N)_{N\in\N}$ approximates the sequence $(Q_N)_{N\in\N}$ with probability approaching $1$ if
\begin{align*}
\lim_{N\rightarrow\infty}\PR{\frac{1}{N}\kld{P_N}{Q_N} < \varepsilon_N} = 0.
\end{align*}
\end{definition}

\begin{remark}
Note that Definition~\ref{def:approximation} is equivalent to saying that 
\begin{align*}
\frac{1}{N}\kld{P_N}{Q_N} \stackrel{\PP}{\rightarrow} 0, \quad \text{as $N\rightarrow\infty$.}
\end{align*}
\end{remark}

\subsection{Main results} \label{sec:main results}

Recall that $a$ is the derivative of $A$. By the Lebesgue criterion of Riemann integrability,  the function $a$ is continuous almost everywhere on $[0,1]$. If $A$ is non-concave, define the threshold 
\begin{align}
\beta_G = \frac{\sqrt{2\log 2}}{\esssup_{t\in \{A\neq\hat{A}\}} \sqrt{a(t)}}, \label{eq:threshold}
\end{align}
For completeness, we define $\beta_G=\infty$ when $A$ is concave.
We now state the main results. 

\subsubsection{Subcritical and critical regime \texorpdfstring{$\beta \leq \beta_G$}{beta small}: optimality of recursive sampling}

Fix $\beta>0$, $N\in\N$, and $M=M_N\in\llbracket 1,N\rrbracket$. Given a configuration of the continuous random energy model with depth $N$, consider the following algorithm:
\begin{center}
  \begin{algorithm}[H]
    \caption{Recursive sampling  on renormalized tree}
    \label{algor}
    set $v=\varnothing$\quad
    \While{$\abs{v}<N$} {
      sample $w$ with $\abs{w}=M\wedge (N-\abs{v})$ according to the Gibbs measure $\mu^v_{\beta,M\wedge (N-\abs{v})}$\quad
      replace $v$ with $vw$\quad
    }
    output $v$
  \end{algorithm}
\end{center}

\begin{remark}
\label{rem:algorithm}
This is the same algorithm as the one in \cite{KL_ejp22} except that now the law of the Gibbs measure $\mu^v_{\beta,M\wedge (N-\abs{v})}$ depends on the depth of $v$. Again, its running time is deterministic and bounded by $\lceil N/M \rceil 2^{M}$. The output law of Algorithm~\ref{algor} is a random probability measure $\mu_{\beta,M,N}$ on $\partial\T_N$ that is recursively defined as follows:
\begin{equation}
  \begin{split}
    \mu_{\beta,M,0}(\varnothing) &= 1 \\
    \mu_{\beta,M,N\wedge (k+1)M}(vw) &= \mu_{\beta,M,kM}(v)\cdot \mu^v_{\beta,M\wedge(N-kM)}(w) 
  \end{split}
  \label{eq:mu_decomposition}
\end{equation}
for all $\abs{v}=kM$, $\abs{w}=M\wedge(N-kM)$ and $ k\in \llbracket 0,\floor*{\frac{N}{M}}\rrbracket$.
It is not hard to see that
\begin{align}
\mu_{\beta,M,N}(u) 
= \frac{e^{\beta X_u}}{Z_{\beta,M,N}(u)}, \label{eq:algor dist}
\end{align}
where 
\begin{align*}
Z_{\beta,M,N}(u) = \prod_{k=1}^{\floor{N/M}} Z_{\beta,M\wedge (N-kM)}^{u[kM]}.
\end{align*}
\end{remark}

The first theorem states that the KL divergence from the output law of Algorithm~\ref{algor} to the Gibbs measure concentrates in the following sense.
\begin{theorem}[Concentration bounds] \label{main:concentration}
Let $\beta>0$, $N\in\N$ and $M\in\llbracket 1,N\rrbracket$. Then for all $p\geq 1$, there exists a constant $C_p>0$ depending only on $p$ such that
\begin{align*}
\frac{1}{N}\norm{\kld{\mu_{\beta,M,N}}{\mu_{\beta,N}}-\EX{\kld{\mu_{\beta,M,N}}{\mu_{\beta,N}}}}_p 
\leq \frac{\beta C_p}{\sqrt{M}}.
\end{align*}
\end{theorem}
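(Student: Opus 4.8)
The plan is to express the KL divergence as a sum of martingale increments along the renormalization scale and to apply a concentration inequality for Lipschitz functions of the Gaussian field (or, equivalently, a martingale-difference $L^p$ bound à la Burkholder--Davis--Gundy / Efron--Stein). First, using \eqref{eq:algor dist} and \eqref{eq:Gibbs}, observe that
\[
\kld{\mu_{\beta,M,N}}{\mu_{\beta,N}}
= \sum_{u\in\partial\T_N} \mu_{\beta,M,N}(u)\log\frac{\mu_{\beta,M,N}(u)}{\mu_{\beta,N}(u)}
= \log Z_{\beta,N} - \EX{\log Z_{\beta,M,N}(U)\,\middle|\,\mathbf X}_{U\sim\mu_{\beta,M,N}},
\]
so that the divergence is a difference of a global free energy and a sum of block free energies $\log Z^{u[kM]}_{\beta,M\wedge(N-kM)}$ averaged against $\mu_{\beta,M,N}$. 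The key structural point is that both terms are smooth, explicitly Lipschitz functions of the Gaussian vector $(X_u)_{u\in\T_N}$: indeed $\nabla_{X_v}\log Z_{\beta,N}=\beta\,\mu_{\beta,N}(\{w:w\ge v\})$ and similarly the gradient of each block term is $\beta$ times a (sub)probability, so the gradients are bounded and, more importantly, have controlled $\ell^2$-norm once one sums over a level of the tree.

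Second, I would set up the Gaussian interpolation/Gaussian Poincaré machinery at the right scale. Write the covariance \eqref{eq:cov of CREM} as the sum over levels $n=1,\dots,N$ of independent increments, each contributing variance $N(A(n/N)-A((n-1)/N))$ per edge. Grouping the $N$ levels into $\lceil N/M\rceil$ blocks of $M$ consecutive levels, let $\mathcal G_k$ be the $\sigma$-algebra generated by the increments in the first $k$ blocks, and set $D_k:=\EX{G\,\middle|\,\mathcal G_k}-\EX{G\,\middle|\,\mathcal G_{k-1}}$ where $G=\kld{\mu_{\beta,M,N}}{\mu_{\beta,N}}$. Then $G-\EX G=\sum_k D_k$ is a martingale with $\lceil N/M\rceil$ increments, and by Burkholder--Davis--Gundy (or Pinelis) one has $\norm{G-\EX G}_p\le C_p\,\bigl\|(\sum_k D_k^2)^{1/2}\bigr\|_p$. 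It remains to bound each $\norm{D_k}_p$. Here I would use the Gaussian concentration within a single block: conditionally on $\mathcal G_{k-1}$, re-randomizing the $k$-th block of increments changes $G$ by an amount whose $L^p$ norm is controlled by $\beta$ times the square root of the total variance injected in that block, which is $N(\hat A\text{ or }A)$ evaluated over an interval of length $M/N$, i.e.\ $O(\beta\sqrt{M})$ — and the tree-indexed structure, combined with the fact that the relevant gradients sum to a probability on each level, keeps the "number of effective coordinates" from blowing up the bound. Dividing by $N$ and using that there are $\lceil N/M\rceil$ blocks gives $\frac1N\norm{G-\EX G}_p\le C_p\beta\sqrt{M}\cdot\frac{\sqrt{\lceil N/M\rceil}}{N}\asymp \frac{C_p\beta}{\sqrt{M}}$, as claimed.

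The main obstacle, I expect, is the per-block bound $\norm{D_k}_p=O(\beta\sqrt M)$: one must show that resampling a block of $M$ levels perturbs the KL divergence by only $O(\beta\sqrt M)$ in $L^p$, \emph{uniformly in $N$ and in the block index}, despite the divergence being an average over exponentially many leaves. The clean way is a Gaussian interpolation argument: couple two independent copies of the $k$-th block's Gaussian increments and bound the resulting change in $G$ by integrating the gradient of $G$ against the interpolated increment, then take $L^p$ norms and use Gaussian hypercontractivity together with the identity $\sum_{v\text{ at a fixed level}}\mu(\{w\ge v\})=1$ to turn the sum of squared gradients into the block variance $\beta^2\cdot N\cdot\bigl(A(\cdot)-A(\cdot)\bigr)=O(\beta^2 M)$. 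A secondary subtlety is that $G$ also depends on increments \emph{below} block $k$ through the outer measure $\mu_{\beta,M,N}$ used to average $\log Z_{\beta,M,N}(U)$, so when computing $D_k$ one must differentiate that average as well; this contributes another term of the same order, handled by the same gradient bound, since the map $u\mapsto\mu_{\beta,M,N}(u)$ has gradients that are again $\beta$ times probabilities. Once the per-block estimate is in place, assembling the martingale bound is routine.
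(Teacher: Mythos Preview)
Your martingale/BDG scaffolding is more elaborate than needed, and the step you flag as a ``secondary subtlety'' is in fact the crux---and your justification for it does not go through as written. You claim that differentiating the outer measure $\mu_{\beta,M,N}$ ``contributes another term of the same order'' because its gradients are ``$\beta$ times probabilities''. But those gradients multiply $\log Z_{\beta,M,N}(u)=\sum_j \log Z^{u[jM]}_{\beta,M}$, a quantity of order $N$, not $O(1)$; so the naive gradient bound for $G$ in block-$k$ variables is \emph{not} $O(\beta\sqrt M)$. Concretely, for $w$ in block $k$ with $u_0^*=w[(k-1)M]$,
\[
\partial_{Y_w}\Bigl[\sum_{j\ge k}\sum_{|u|=jM}\mu_{\beta,M,jM}(u)\log Z^u_{\beta,M}\Bigr]
=\beta\,\mu_{\beta,M,(k-1)M}(u_0^*)\cdot\mathrm{Cov}_{\mu^{u_0^*}_{\beta,M}}\!\bigl[\mathbb 1_{w'\le\cdot}\,,\,S(u_0^*,\cdot)\bigr],
\]
where $S$ is a sum of $\sim N/M$ block free energies; this covariance is not $O(1)$, and your ``sum to a probability on each level'' identity does not collapse the squared gradient to $O(\beta^2M)$. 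One can repair this by first computing $\E[G\mid\mathcal G_k]$ explicitly using the branching property---then every term with $j\ge k$ becomes the deterministic constant $\E[\log Z^{(jM)}_{\beta,M}]$ and the problematic dependence disappears---but at that point you have essentially rediscovered the paper's decomposition, and the BDG machinery is no longer buying anything.

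The paper's route is much shorter and avoids gradients entirely. It starts from the exact identity (Proposition~\ref{prop:KL decomp})
\[
\kld{\mu_{\beta,M,N}}{\mu_{\beta,N}}=\log Z_{\beta,N}-\sum_{k}\sum_{|u|=kM}\mu_{\beta,M,kM}(u)\,\log Z^{u}_{\beta,M\wedge(N-kM)},
\]
subtracts the expectation (Proposition~\ref{prop:KL expectation}), and applies Minkowski term by term. Each resulting summand is $\sum_{|u|=kM}\mu_{\beta,M,kM}(u)\bigl(\log Z^u-\E[\log Z^u]\bigr)$, and the key move is a one-line Jensen inequality on the random probability $\mu_{\beta,M,kM}$ (convexity of $x\mapsto|x|^p$) followed by the branching property:
\[
\E\Bigl|\sum_{u}\mu_{\beta,M,kM}(u)(\log Z^u-\E[\log Z^u])\Bigr|^p
\le \E\Bigl[\sum_u\mu_{\beta,M,kM}(u)\,\bigl|\log Z^u-\E[\log Z^u]\bigr|^p\Bigr]
=\E\bigl|\log Z^{(kM)}-\E[\log Z^{(kM)}]\bigr|^p.
\]
This step dissolves exactly the ``outer measure'' dependence you struggle with---no gradients, no BDG. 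Standard Gaussian concentration for the free energy then gives the per-block bound $C_p\beta\sqrt{N\bigl(A((k+1)M/N)-A(kM/N)\bigr)}$, and Cauchy--Schwarz over $k$ (using that the telescoping sum of variance increments equals $1$) yields $C_p\beta/\sqrt M$.
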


Next, we show that with a suitable choice of $M_N$, the expectation of the KL divergence renormalized by $N$ converges to the difference between $F_\beta$ and $\tilde{F}_\beta$.

\begin{theorem}[Convergence of the KL divergence]
\label{main:convergence of KL}
Let $\beta>0$, $N\in\N$, and $M_N$ be a sequence such that $M_N\in\llbracket 1,N\rrbracket$ and $M_N\rightarrow\infty$. Let $\tilde{\mu}_{\beta,N} = \mu_{\beta,M_N,N}$ be the output law of Algorithm~\ref{algor}. Then,
\begin{align*}
\lim_{N\rightarrow\infty} \frac{1}{N} \EX{\kld{\tilde{\mu}_{\beta,N}}{\mu_{\beta,N}}} = F_\beta - \tilde{F}_\beta\geq 0,
\end{align*}
with equality holding if and only if $\beta\leq\beta_G$. 
\end{theorem}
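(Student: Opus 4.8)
The plan is to compute $\EX{\kld{\tilde\mu_{\beta,N}}{\mu_{\beta,N}}}$ explicitly and take the limit. Using the formulas \eqref{eq:algor dist} and \eqref{eq:Gibbs}, for $u\in\partial\T_N$ we have $\log\frac{\tilde\mu_{\beta,N}(u)}{\mu_{\beta,N}(u)} = \log Z_{\beta,N} - \log Z_{\beta,M_N,N}(u)$. Summing against $\tilde\mu_{\beta,N}$ and noting that $\log Z_{\beta,N}$ does not depend on $u$, the KL divergence equals $\log Z_{\beta,N} - \EX{\log Z_{\beta,M_N,N}(u)\,\middle|\,\X}$ where the inner expectation is over $u\sim\tilde\mu_{\beta,N}$. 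By the recursive product structure of $Z_{\beta,M_N,N}(u) = \prod_{k=1}^{\floor{N/M_N}} Z^{u[kM_N]}_{\beta,M_N\wedge(N-kM_N)}$ and the fact that the $k$-th factor depends only on the block increment, the conditional expectation over $u$ telescopes into a sum of conditional log-partition functions along the sampled path. Taking $\EX{\cdot}$ over the disorder and using the branching property, each block contributes $\EX{\log Z^{(kM_N)}_{\beta,M_N\wedge(N-kM_N)}}$ (for the denominator) while $\EX{\log Z_{\beta,N}}$ supplies the numerator.

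Next I would identify the two limits. By \eqref{def:Fe}, $\frac1N\EX{\log Z_{\beta,N}}\to F_\beta = \int_0^1 f(\beta\sqrt{\hat a(s)})\dd s$. For the denominator, a single block of length $M_N$ starting at depth $kM_N$ is a CREM-type process $\X^{(kM_N)}$ restricted to depth $M_N$, whose covariance is governed by $s\mapsto A\!\big(\tfrac{kM_N+sM_N}{N}\big)$ on $s\in[0,1]$; since $M_N\to\infty$, applying Fact~\ref{fact:Fe} on each block gives $\frac{1}{M_N}\EX{\log Z^{(kM_N)}_{\beta,M_N}} \approx \int_0^1 f\!\big(\beta\sqrt{\widehat{a_k}(s)}\big)\dd s$ where $\widehat{a_k}$ is the local concave-hull derivative over the $k$-th block. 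Because the block width $M_N/N\to0$, the local concave hull over a shrinking window converges (at almost every point, using Riemann integrability of $a$) to $a$ itself rather than to $\hat a$; hence a Riemann-sum/Cesàro argument over the $\floor{N/M_N}$ blocks yields $\frac1N\EX{\log Z_{\beta,M_N,N}(u)}\to\int_0^1 f(\beta\sqrt{a(s)})\dd s = \tilde F_\beta$. Subtracting gives the claimed limit $F_\beta-\tilde F_\beta$.

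To make the block estimate uniform I would prove a quantitative version of Fact~\ref{fact:Fe}: for a CREM of depth $m$ with covariance slope profile $b:[0,1]\to[0,\infty)$ bounded by $\|a\|_\infty$, $\big|\tfrac1m\EX{\log Z} - \int_0^1 f(\beta\sqrt{\hat b(s)})\dd s\big|\le C\log m/m$ with $C$ depending only on $\beta$ and $\|a\|_\infty$. Summing this error over the $O(N/M_N)$ blocks contributes $O\!\big(\tfrac{\log M_N}{M_N}\big)\to0$, so only the leading terms survive. The concavity characterization of equality then follows from Proposition~\ref{prop:FE comparison}: the statement $F_\beta - \tilde F_\beta\ge 0$ with equality iff $\beta\le\beta_G$ is exactly the content there, since $f$ is convex and strictly monotone so $\int_0^1 f(\beta\sqrt{\hat a})\ge\int_0^1 f(\beta\sqrt a)$ with equality precisely when $f(\beta\sqrt{\hat a(s)})=f(\beta\sqrt{a(s)})$ for a.e.\ $s$, i.e.\ when on $\{A\neq\hat A\}$ one has $\beta\sqrt{\hat a(s)}$ and $\beta\sqrt{a(s)}$ both in the linear regime $[\sqrt{2\log2},\infty)$ — which by \eqref{eq:threshold} is equivalent to $\beta\le\beta_G$.

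The main obstacle I anticipate is the interchange of limits in the block analysis: one must control the error in approximating the free energy of a depth-$M_N$ inhomogeneous block by the integral $\int_0^1 f(\beta\sqrt{\widehat{a_k}})$ \emph{uniformly in $k$}, and then show the local concave hulls $\widehat{a_k}$ average out to $a$ and not to $\hat a$. The subtlety is that on any fixed macroscopic interval the concave hull can differ from $a$, but on windows of width $M_N/N\to0$ the concave-hull correction is negligible in Cesàro average; handling the interaction of this vanishing window with the possibly bad (non-concave, merely Riemann-integrable) set $\{A\neq\hat A\}$ — in particular near its boundary and near points where $a$ is discontinuous — is where the careful work lies, and is presumably why $M_N\to\infty$ (rather than $M_N$ fixed) is required.
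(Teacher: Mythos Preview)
Your decomposition of $\EX{\kld{\tilde\mu_{\beta,N}}{\mu_{\beta,N}}}$ into $\EX{\log Z_{\beta,N}}-\sum_k\EX{\log Z^{(kM_N)}_{\beta,M_N\wedge(N-kM_N)}}$ is exactly what the paper does (Propositions~\ref{prop:KL decomp} and~\ref{prop:KL expectation}), and using \eqref{def:Fe} for the first term and Proposition~\ref{prop:FE comparison} for the sign/equality statement is also the paper's route. The substantive difference is in how you handle the block sum.

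You propose to apply the CREM free energy formula (Fact~\ref{fact:Fe}) to each block, obtaining the \emph{local} concave hull $\widehat{a_k}$, and then argue that as the window width $M_N/N\to 0$ these local hulls average back to $a$. The paper avoids this detour entirely. In Lemma~\ref{lem:sandwich} it sandwiches each block, via Kahane's inequality, between two \emph{homogeneous} branching random walks with constant slopes $a_k^-=\essinf_{[kM_N/N,(k+1)M_N/N]}a$ and $a_k^+=\esssup_{[kM_N/N,(k+1)M_N/N]}a$. For a homogeneous BRW the concave hull is trivial, so one only needs $\tfrac{1}{M}\EX{\log\tilde Z_{\beta,M}}\to f(\beta)$; the paper upgrades this to the uniform statement $|f_M(\beta)-f(\beta)|\le\varepsilon\beta$ (Lemma~\ref{lem:uniform}) by a Dini-type argument on $g_M(\beta)=f_M(\beta)/\beta-\log 2/\beta$. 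The block sum is then squeezed between Riemann sums of $f(\beta\sqrt{a})$ with upper/lower Darboux endpoints, and Riemann integrability of $a$ finishes. Thus the ``main obstacle'' you anticipate---uniformity of the block free energy estimate and the interaction with local concave hulls on $\{A\neq\hat A\}$---simply does not arise in the paper's argument; the $O(\log m/m)$ quantitative CREM bound you would need is both unproven and unnecessary.

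There is also an error in your last paragraph on the equality case. Equality $\int f(\beta\sqrt{\hat a})=\int f(\beta\sqrt a)$ does \emph{not} come from pointwise equality $f(\beta\sqrt{\hat a(s)})=f(\beta\sqrt{a(s)})$ (which would force $\hat a=a$), nor from both arguments lying in the \emph{linear} regime $[\sqrt{2\log 2},\infty)$. In the linear regime one gets $\int_{\mathcal I}\sqrt{\hat a}>\int_{\mathcal I}\sqrt a$ (Lemma~\ref{lem:prop of concave hull}\eqref{lem:prop of concave hull.3}), hence strict inequality. Equality holds when, on each component $\mathcal I\subset\{A\neq\hat A\}$, both $\beta\sqrt{\hat a_{\mathcal I}}$ and $\beta\sqrt{a(s)}$ lie in the \emph{quadratic} regime $[0,\sqrt{2\log 2}]$: then $f(\beta\sqrt{\hat a})-f(\beta\sqrt a)=\tfrac{\beta^2}{2}(\hat a-a)$, whose integral over $\mathcal I$ vanishes by Lemma~\ref{lem:prop of concave hull}\eqref{lem:prop of concave hull.2}. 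That quadratic-regime condition is precisely $\beta\le\beta_G$ (Lemma~\ref{lem:beta_G}), so your conclusion is right but the mechanism you describe is inverted.
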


As a corollary of Theorem~\ref{main:convergence of KL}, in the subcritical regime, with a good choice of $M_N$, the mean of the KL divergence divided by $N$ converges to $0$ when $N\rightarrow\infty$. Moreover, for $\varepsilon>0$, with a good choice of $M_N$, the running time is of $O(N^{1+\varepsilon})$.
\begin{corollary}[Efficient sampling]
  \label{main:eff sampling}
  Fix $\beta\in [0,\beta_G]$. Given $\varepsilon>0$, let $M_N=\floor{\varepsilon\log_2 N}\wedge N$ and
  $\tilde{\mu}_{\beta,N} = \mu_{\beta,M_N,N}$ be the output law of Algorithm~\ref{algor}. Then,  
  \begin{align}
  \label{eq:main.1}
  \lim_{N\rightarrow\infty} \frac{1}{N} \EX{\kld{\tilde{\mu}_{\beta,N}}{\mu_{\beta,N}}} = 0.
  \end{align}
Moreover, the running time is deterministic and of order $O(N^{1+\varepsilon})$.
\end{corollary}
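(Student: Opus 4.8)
The plan is to obtain Corollary~\ref{main:eff sampling} as an immediate consequence of Theorem~\ref{main:convergence of KL} together with the deterministic running-time bound recorded in Remark~\ref{rem:algorithm}; the ``proof'' is essentially a verification that the prescribed scale $M_N$ fits the hypotheses.

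First I would check that $M_N=\floor{\varepsilon\log_2 N}\wedge N$ meets the requirements of Theorem~\ref{main:convergence of KL}: as soon as $N\ge 2^{1/\varepsilon}$ one has $1\le M_N\le N$, and $M_N\to\infty$ because $\log_2 N\to\infty$. Since $\beta\in[0,\beta_G]$, Theorem~\ref{main:convergence of KL} then yields
\[
\lim_{N\to\infty}\frac{1}{N}\EX{\kld{\tilde{\mu}_{\beta,N}}{\mu_{\beta,N}}}=F_\beta-\tilde{F}_\beta=0,
\]
the last equality being exactly the equality case of that theorem (equivalently, Proposition~\ref{prop:FE comparison}) in the regime $\beta\le\beta_G$. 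This is precisely \eqref{eq:main.1}.

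For the running time, I would recall from Remark~\ref{rem:algorithm} that Algorithm~\ref{algor} always terminates after a deterministic number of steps bounded by $\lceil N/M_N\rceil\,2^{M_N}$. Using $M_N\le\varepsilon\log_2 N$ gives $2^{M_N}\le 2^{\varepsilon\log_2 N}=N^\varepsilon$, while $M_N\ge 1$ gives $\lceil N/M_N\rceil\le N$ for $N$ large; multiplying, the running time is at most $N^{1+\varepsilon}$ for all large $N$, i.e.\ it is of order $O(N^{1+\varepsilon})$ in the sense of Definition~\ref{def:time complexity} (and is, in addition, deterministic).

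I do not expect any genuine obstacle here: all the analytic content is already carried by Theorem~\ref{main:convergence of KL} (and, behind it, Theorem~\ref{main:concentration} and the free-energy identities). The only mild points of care are the elementary estimate $2^{\floor{\varepsilon\log_2 N}}\le N^\varepsilon$ and the observation that the particular sequence $M_N=\floor{\varepsilon\log_2 N}\wedge N$ does tend to infinity, which is what licenses applying the convergence theorem with this specific renormalization scale while keeping the cost of solving the block-size-$M_N$ subproblems subpolynomial.
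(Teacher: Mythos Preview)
Your proposal is correct and follows essentially the same route as the paper's own proof: verify that $M_N=\floor{\varepsilon\log_2 N}\wedge N$ satisfies the hypotheses of Theorem~\ref{main:convergence of KL} to obtain \eqref{eq:main.1} via the equality case $\beta\le\beta_G$, and then bound the deterministic running time $\lceil N/M_N\rceil\,2^{M_N}\le N\cdot N^{\varepsilon}=N^{1+\varepsilon}$ using Remark~\ref{rem:algorithm}. There is nothing to add.
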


\begin{remark}
Note that for $A$ concave, Corollary~\ref{main:eff sampling} yields that Algorithm~\ref{algor} approximates the Gibbs measure for all $\beta\in (0,\infty)$ as $\beta_G=\infty$ for $A$ concave.

Corollary~\ref{main:eff sampling} implies in particular that the algorithm approximates the Gibbs measure with probability approaching $1$. Indeed, if we choose, e.g., $\varepsilon_N = (\frac{1}{N} \EX{\kld{\tilde{\mu}_{\beta,N}}{\mu_{\beta,N}}})^{1/2}$, Corollary~\ref{main:eff sampling} and Markov's inequality then yield
\begin{align}
\PR{\frac{1}{N}\kld{\tilde{\mu}_{\beta,N}}{\mu_{\beta,N}} \leq \varepsilon_N} \geq 1 - \varepsilon_N^{1/2} \rightarrow 1, \quad \text{as} \quad N\rightarrow\infty. \label{eq:main.2}
\end{align}
\end{remark}

We provide the proof of Corollary~\ref{main:eff sampling} below as it is short.

\begin{proof}[Proof of Corollary~\ref{main:eff sampling}]
Note that the choice of $M_N$ satisfies the assumption of Theorem~\ref{main:convergence of KL}, so the first statement follows directly from Theorem~\ref{main:convergence of KL}. Next, as mentioned in Remark~\ref{rem:algorithm}, the running time of Algorithm~\ref{algor} is deterministic and is bounded by $\ceil{N/M_N}2^{M_N}$. With our choice of $M_N$, we conclude that 
\begin{align*}
\ceil{N/M_N}2^{M_N}\leq N \cdot 2^{\varepsilon\log_2 N}\leq N^{1+\varepsilon}, 
\end{align*}
and the proof is completed.
\end{proof}

\subsubsection{Supercritical regime \texorpdfstring{$\beta > \beta_G$}{beta small}: hardness for generic algorithms}

Now we assume that $A$ is non-concave, so $\beta_G<\infty$. For $\beta > \beta_G$, we provide the following hardness result for the class of algorithms satisfying Definition~\ref{def:algor}.

\begin{theorem}[Hardness]
\label{main:hardness}
Suppose that $A$ is non-concave. Let $\beta > \beta_G$. For any algorithm satisfying Definition~\ref{def:algor} that approximates the Gibbs measure with probability approaching $1$, there exists $\gamma>0$ such that
\begin{align*}
\lim_{N\rightarrow\infty} \PR{\tau \geq e^{\gamma N}} = 1,
\end{align*}
where $\tau$ is the running time of the algorithm.
\end{theorem}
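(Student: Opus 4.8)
The plan is to prove hardness by a conditional-second-moment / overlap argument showing that in the supercritical regime the Gibbs measure places almost all of its mass on leaves whose energy "profile" lies along the concave-hull direction, and that any sub-exponential algorithm cannot find such leaves. I would proceed as follows.

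\textbf{Step 1: Localize the problem to a bad stretch.} Since $A$ is non-concave and $\beta>\beta_G$, by definition of $\beta_G$ there is a set $S\subseteq\{A\neq\hat A\}$ of positive Lebesgue measure and some $\delta>0$ with $\beta\sqrt{a(t)}\geq\sqrt{2\log 2}+\delta$ for $t\in S$; equivalently, on this region the ``local'' inverse temperature is in the glassy (frozen) phase while $\hat a$ is strictly smaller there. Using the Riemann-integrability of $a$ and concavity of $\hat A$, extract an interval $[s_1,s_2]\subset[0,1]$ (of length $\ell>0$) on which $A$ lies strictly below $\hat A$, $A$ is nearly affine with slope $\hat a$, but the true increments have larger ``energy capacity'' than the hull permits. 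The contribution to $F_\beta-\tilde F_\beta$ from this interval is strictly positive by Theorem~\ref{main:convergence of KL}.

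\textbf{Step 2: Structure of the Gibbs measure on the bad stretch.} Condition on $\F_{s_1 N}$. On the subtree between depths $s_1N$ and $s_2N$, the CREM restricted there is (after rescaling) a CREM with covariance governed by $A$ shifted; because the relevant local slopes put us in the frozen phase, a first-moment / truncated-second-moment computation shows that the Gibbs mass is concentrated on leaves $u$ for which the partial free energy $\frac1N\log Z$ along $[s_1N,s_2N]$ is at least $F_\beta^{[s_1,s_2]}$, and crucially that the number of vertices at each intermediate depth carrying non-negligible Gibbs mass is $e^{o(N)}$ — the measure is ``localized'' in the branching-random-walk sense. Simultaneously, a typical vertex (one an oblivious search would hit) only achieves the annealed value $\tilde F_\beta^{[s_1,s_2]}<F_\beta^{[s_1,s_2]}$, so its Gibbs weight is exponentially smaller than that of the good leaves.

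\textbf{Step 3: Algorithmic lower bound via a hitting argument.} Suppose an algorithm $\mathrm v$ with running time $\tau$ approximates the Gibbs measure, i.e.\ $\frac1N\kld{P_N}{\mu_{\beta,N}}\xrightarrow{\PP}0$. If $\tau\leq e^{\gamma N}$ for small $\gamma$, then the algorithm queries at most $e^{\gamma N}$ vertices; I would argue (as in \cite{AB&M20,KL_ejp22}) that with probability $\to1$ none of these queried vertices lies on a ``good'' sub-tree in the bad stretch, because the good vertices form an exponentially thin set that is measurable with respect to the disorder in a way the algorithm's greedy exploration cannot anticipate — formally, one shows the algorithm's output law $P_N$ must then put mass $1-o(1)$ on the ``annealed-typical'' leaves, whose total Gibbs mass is $e^{-cN}$. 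Plugging into the KL divergence, $\kld{P_N}{\mu_{\beta,N}}\geq \sum P_N(u)\log\frac{P_N(u)}{\mu_{\beta,N}(u)}\geq -H(P_N)+\sum P_N(u)\log\frac1{\mu_{\beta,N}(u)}\geq cN - o(N)$ on this event (using that $-\log\mu_{\beta,N}(u)\geq cN$ uniformly over the typical set and that the entropy term is controlled since $P_N$ itself is supported on few atoms relative to the good set, or more simply via the data-processing/Pinsker route comparing the mass each measure assigns to the good set). This contradicts approximation, forcing $\tau>e^{\gamma N}$ with probability $\to1$.

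\textbf{Main obstacle.} The delicate point is Step 3: turning ``the algorithm has seen few vertices'' into ``the algorithm's output misses the Gibbs-typical set.'' The subtlety is that the algorithm is adaptive and sees the energies $X(v(k))$, so it could in principle follow large energies uphill. The resolution — and the technical heart — is a union bound over all $\tilde\F$-stopping-line strategies combined with the branching independence across the $2^{s_1 N}$ subtrees of depth $s_1 N$: an exploration budget of $e^{\gamma N}$ with $\gamma$ smaller than the ``cost rate'' of discovering a frozen-phase record path cannot, except with probability $e^{-\Theta(N)}$, locate even one good leaf, because finding such a leaf in an inhomogeneous BRW in the frozen phase is itself exponentially costly (this is precisely the content that makes $\beta_G$ the threshold). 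I would make this quantitative by comparing $\gamma$ to the large-deviation rate for the number of leaves with near-maximal partial energy on the bad stretch, and choosing $\gamma$ below that rate; the case analysis matching $\beta$ against $\beta_G$ uses \eqref{eq:threshold} and \eqref{eq:BRW Fe} exactly as in Step 1.
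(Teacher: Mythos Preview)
Your three-part strategy matches the paper's: (a) the Gibbs measure concentrates on an ``exceptional'' set of leaves; (b) by the KL constraint, so must the algorithm's output law; (c) finding an exceptional leaf takes exponential time. The decisive difference, and the real gap, is how the exceptional set is defined. The paper does \emph{not} use Gibbs-localization; instead it declares a leaf exceptional if some ancestor makes a block increment exceeding $N\sqrt{2\log 2(1+z)a_k}$ on one of $K$ blocks (a ``$(z,K,\mathbf{X})$-steep ancestor'', Definition~\ref{def:bad ancestor}). Proposition~\ref{prop:Gibbs sampling} then shows by a first-moment computation, comparing $F_\beta$ to a Riemann sum for $\tilde F_\beta$, that the Gibbs measure gives mass $\geq 1-e^{-\delta N}$ to this set whenever $\beta>\beta_G$. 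The set may contain exponentially many leaves; what matters is that membership is a \emph{local path event} whose rarity (Lemma~\ref{lem:subproblem}) plugs directly into the Addario-Berry--Maillard conditional-geometric argument (Proposition~\ref{prop:sub problem is hard}). Your localization set --- ``$e^{o(N)}$ vertices carrying non-negligible Gibbs mass'' --- has no such local description, so the ``union bound over stopping-line strategies'' you offer as the resolution of your Main Obstacle has no concrete event to union over; an adaptive algorithm following energy gradients is not excluded by cardinality alone.

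For step (b), the clean tool is Birg\'e's inequality (Fact~\ref{Birge}), i.e.\ data-processing applied to the indicator of the steep set: from $\mu_{\beta,N}(\text{non-steep})\leq e^{-\delta N}$ and $\frac1N\kld{\tilde\mu_N}{\mu_{\beta,N}}\to 0$ one gets $\tilde\mu_N(\text{non-steep})\to 0$ directly (Lemma~\ref{lem:approx}). You mention this route only parenthetically; your primary suggestion via $-H(P_N)+\sum_u P_N(u)\log\frac{1}{\mu_{\beta,N}(u)}$ does not work as stated, since $H(P_N)$ can be of order $N$ and there is no uniform lower bound on $-\log\mu_{\beta,N}(u)$ over the annealed-typical set. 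Once $\tilde\mu_N(\text{steep})\to1$, the output $v(\tau)$ is steep with high probability, hence $\tau\geq\tau'$ (Proposition~\ref{prop:tau dominates tau'}) without any contrapositive or conditioning on $\{\tau\leq e^{\gamma N}\}$.
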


\subsection{Discussion and related work}

A natural way to sample from the Gibbs measure is via the Markov chain Monte Carlo (MCMC) method. In \cite{GREMMetropolis2020}, Nascimento and Fontes studied a Metropolis dynamics on the GREM, where the state space of this dynamics is the set of leaves. They showed that for all $\beta>0$, the spectral gap of the Metropolis dynamics decays exponentially to $0$ as $N\rightarrow\infty$ almost surely, which hinted that the MCMC method might not be the best way to approximate the Gibbs measure efficiently.

The current work is largely inspired by the previous work of Addario-Berry and Maillard \cite{AB&M20} on finding the near maximum (ground state) of the CREM. Bovier and Kurkova showed in \cite{CREM04} that the maximum of the CREM satisfies
\begin{align}
x_{GSE}\coloneqq\lim_{N\rightarrow\infty}\frac{1}{N}\EX{\max_{\abs{u}=N} X_u} = \sqrt{2\log 2}\int_0^1 \sqrt{\hat{a}(s)}\dd{s}. \label{eq:GSE}
\end{align}
With this result in mind, the problem that Addario-Berry and Maillard addressed can be phrased as the following optimization problem.
\begin{problem}
\label{prob:GSE}
For what kind of $A$ such that for all $\varepsilon>0$, there exists a polynomial-time algorithm that can find a vertex $\abs{u}=N$ such that $X_u\geq (x_{GSE}-\varepsilon)N$ with high probability?
\end{problem}
To respond to Problem~\ref{prob:GSE}, they showed the following phase transition: there exists a threshold 
\begin{align*}
x_*=\sqrt{2\log 2}\int_0^1 \sqrt{a(s)}\dd{s}
\end{align*}
such that for any $x<x_*$, there exists a linear time algorithm that finds $X_v\geq xN$ with high probability; for any $x>x_*$, there exists $z>0$ such that with high probability, it takes at least $e^{zN}$ queries to find $X_u\geq xN$. Since $x_{GSE}\geq x_*$ with equality holding if and only if $A$ is concave, the near maximum can be found if and only if $A$ is concave. Another remark is that their result correspond to the special case of our result where $\beta\rightarrow\infty$, and linear algorithm they proposed is similar to Algorithm~\ref{algor}.

Problem~\ref{prob:GSE} also appeared in the context of mean-field spin glass. It is known that a generalized Thouless--Anderson--Palmer approach proposed by Subag in  \cite{subagFreeEnergyLandscapes2018} gives a tree structure from the origin to the spin space when $\beta=\infty$. This picture allows Subag to show in \cite{Subag21} that for the full-RSB spherical spin glasses, a greedy type algorithm that exploits this tree structure gives an efficient way to find a near maximum of these models. On the other hand, it was conjectured by the physicists that when $\beta\rightarrow\infty$, the SK model also exhibits the full-RSB property (see \cite{mezardSpinGlassTheory1987}). By assuming this conjecture, Montanari solved Problem~\ref{prob:GSE} for the SK model in \cite{Montanari19} via the so-called approximate message passing (AMP) type algorithm, where one example of the AMP algorithms was Bolthausen's iteration scheme \cite{bolthausenIterativeConstructionSolutions2014} which solves the so-called TAP equation. Later, Alaoui, Montanari and Sellke \cite{alaouiOptimizationMeanfieldSpin2021}  extended Montanari's previous result to other mean-field spin glasses that do not exhibit the overlap gap property.

%

The problem of sampling from the Gibbs measure was also considered the context of mean-field spin glasses. This problem was usually attacked by introducing the Glauber dynamics, which also belongs to the MCMC method. For the Sherrington--Kirkpatrick model, physicists (see \cite{sompolinskyDynamicTheorySpinGlass1981,mezardSpinGlassTheory1987}) expected fast convergence to the Gibbs measure in the whole high temperature regime $\beta < 1$. Recently, it was shown by Bauerschmidt and Bodineau in \cite{bauerschmidtVerySimpleProof2019} and by Eldan, Koehler and Zeitouni in \cite{EKZ21} that fast mixing occurs when $\beta < 1/4$. Moreover, Eldan et al. showed in \cite{EKZ21} that the Gibbs measure satisfies a Poincar\'{e} inequality for the Dirichlet form of Glauber dynamics, so the Glauber dynamics mixes in $O(N^2)$ spin flips in total variation distance. Subsequently, this estimate was improved to $O(N\log N)$ by Anari et al. in \cite{anariEntropicIndependenceModified2021}. 

For spherical spin glasses, Gheissari and Jagannath in \cite{GJ19} that Langevin dynamics (continuum version of Glauber dynamics) have a polynomial spectral gap for $\beta$ small. On the other hand, Ben Arous and Jagannath proved in \cite{SpecGap18} that for $\beta$ sufficiently large, the mixing times of Glauber and Langevin dynamics are exponentially large in Ising and spherical spin glasses, respectively.

In \cite{EAMS22}, Alaoui, Montanari and Sellke proposed a non MCMC type algorithm based on the stochastic localization for the SK model. They showed that for $\beta < 1/2$, there exists an algorithm with complexity $O(N^2)$ with output law being close to the Gibbs measure in normalized Wasserstein distance. Moreover, for $\beta>1$, they established a hardness result for the stable algorithms, which means that the output law of these algorithms are stable under small random perturbation of the defining matrix of the SK model. The hardness result for $\beta>1$ was proven by utilizing the disorder chaos, which means for them that Wasserstein distance between the Gibbs measure and the perturbed Gibbs measure is bounded from below by a positive constant for arbitrary small random perturbation.



\paragraph{Overlap gap property and algorithmic hardness.} The overlap gap property, emerging from studying of mean-field spin glasses, seems to be an obstruction of many optimization algorithms for random structures. See Gamarnik \cite{gamarnikOverlapGapProperty2021} for a survey. In the context of the CREM, for a given $\beta>0$, the \emph{overlap distribution} is the limiting law (as $N\to\infty$) of the overlap $\frac{\abs{u\wedge w}}{N}$ of two vertices $u$ and $w$ sampled independently according to the Gibbs measure with inverse temperature $\beta$. The CDF of the limiting overlap distribution $\alpha_\beta:[0,1]\rightarrow [0,1]$ is defined as
\begin{align*}
\alpha_\beta(t) 
\coloneqq \lim_{N \rightarrow\infty} \EX{\sum_{\abs{u}=N}\sum_{\abs{w}=N} \mu_{\beta,N}(u)\mu_{\beta,N}(w)\Ind_{\abs{u\wedge w}/N\leq t}}, 
\end{align*}

The overlap gap property in the context of the CREM means that $\alpha_\beta(t)$ is equal to a constant strictly less than $1$ in an interval $[t_1,t_2]\subseteq [0,1]$. 
On the other hand, it is known in \cite{CREM04} that $\alpha_\beta(t)$ satisfies the following
\begin{align*}
\alpha_\beta(t) 
=
\begin{cases}
\displaystyle \frac{\sqrt{2\log 2}}{\beta\sqrt{\hat{a}(t)}}, & t \leq t_0(\beta), \\
\\
1, & t > t_0(\beta).
\end{cases} 
\end{align*}
When $A$ is concave, the CREM does not exhibits the overlap gap property for any $\beta>0$, which does not contradict the picture mentioned in the previous paragraph. On the other hand, when $A$ is non-concave, the CREM has the overlap gap property if and only if $\beta > \beta_G' = \sqrt{2\log 2}/\sqrt{\hat{a}(t_G)}$. Comparing with the hardness threshold $\beta_G$ defined in \eqref{eq:threshold}, we see $\beta_G < \beta_G'$, which means that some extra ingredients are needed to explain the algorithmic hardness we observe in the present work.

\paragraph{Further direction.} Corollary~\ref{main:eff sampling} implies that if $A$ is concave, for any $\beta\in [0,\infty)$, then the sequence of algorithm constructed from Algorithm~\ref{algor} can approximate the Gibbs measure in the sense of Definition~\ref{def:approximation}. One might ask whether a higher precision is achievable. Namely, let $\alpha\in [0,1)$. Given $\beta>0$, does there exist a sequence of algorithms with corresponding output laws $\tilde{\mu}_{\beta,N}$ such that
\begin{align*}
\lim_{N\rightarrow\infty} \frac{1}{N^{\alpha}}\kld{\tilde{\mu}_{\beta,N}}{\mu_{\beta,N}} = 0,
\end{align*}
with probability approaching $1$? Note that for a general class of branching random walks, with $\alpha=0$ and $\beta>\beta_c$, it is shown in \cite{KL_ejp22} that  with positive probability, this task has a running time of stretched exponential. The result in \cite{KL_ejp22} is derived from the fluctuation of the sampled path of supercritical Gibbs measure done by \cite{ChenMadauleMallein}.

\paragraph{Outline.} The paper is organized as follows. In Section~\ref{sec:decomposition of KL}, we prove that the KL divergence $\kld{\mu_{\beta,M,N}}{\mu_{\beta,N}}$ can be decomposed into weight sums of free energies on subtrees, and we also compute its expectation. This information readily allows us to prove Theorem~\ref{main:concentration} which is provided in Section~\ref{sec:proof of concentration}. Building on the decomposition of the KL divergence provided in Section~\ref{sec:proof of concentration}, we study in Section~\ref{sec:asymptotics of KL} the renormalized limit of $\EX{\kld{\mu_{\beta,M,N}}{\mu_{\beta,N}}}$. This leads to the proof of Theorem~\ref{main:convergence of KL} which is provided at the end of the introduction of Section~\ref{sec:asymptotics of KL}. In Section~\ref{sec:supercritical Gibbs}, we show that for $A$ non-concave and $\beta>\beta_G$, the Gibbs measure tends to sample a rare event. Based on this observation, Theorem~\ref{main:hardness} is proven in Section~\ref{sec:hardness}, where the details are provided at the end of the introduction of Section~\ref{sec:hardness}. In Appendix~\ref{ann:lowbnd of Fe}, we provide a lower bound of the free energy $F_\beta$ that may be of independent interest. Finally, in Appendix~\ref{ann:proof of lem:sandwich}, we provide the details of the proof of Lemma~\ref{lem:sandwich}.

\section{Decomposition of the KL divergence \texorpdfstring{$\kld{\mu_{\beta,M,N}}{\mu_{\beta,N}}$}{dKL}} \label{sec:decomposition of KL}

In this section, we provide in the following proposition a simple decomposition of the KL divergence $\kld{\mu_{\beta,M,N}}{\mu_{\beta,N}}$ in terms of a sum of free energies on subtrees.

\begin{proposition}\label{prop:KL decomp}
For all $\beta>0$ and for any two integers $M,N\in\N$ such that $M\leq N$, we have
\begin{align*}
\kld{\mu_{\beta,M,N}}{\mu_{\beta,N}}
= \log Z_{\beta,N} - \sum_{k=0}^{\floor{N/M}}\sum_{\abs{u}=kM} \mu_{\beta,M,kM}(u)\cdot \log Z^{u}_{\beta,M\wedge (N-kM)}.
\end{align*}
\end{proposition}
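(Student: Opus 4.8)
The plan is to expand the definition of the KL divergence directly and then exploit the product structure of the normalizing constant $Z_{\beta,M,N}(u)$ recorded in \eqref{eq:algor dist}. First I would note that both $\mu_{\beta,M,N}$ and $\mu_{\beta,N}$ assign to a leaf $u$ a weight proportional to $e^{\beta X_u}$, with normalizations $Z_{\beta,M,N}(u)$ and $Z_{\beta,N}$ respectively, so the likelihood ratio collapses to
\[
\frac{\mu_{\beta,M,N}(u)}{\mu_{\beta,N}(u)} = \frac{Z_{\beta,N}}{Z_{\beta,M,N}(u)}, \qquad \abs{u}=N .
\]
Substituting this into \eqref{def:KL} and using $\sum_{\abs{u}=N}\mu_{\beta,M,N}(u)=1$ gives
\[
\kld{\mu_{\beta,M,N}}{\mu_{\beta,N}} = \log Z_{\beta,N} - \sum_{\abs{u}=N}\mu_{\beta,M,N}(u)\,\log Z_{\beta,M,N}(u).
\]

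Next I would rewrite $\log Z_{\beta,M,N}(u)$ as the sum over the blocks of the ancestral line of $u$, namely $\log Z_{\beta,M,N}(u)=\sum_{k=0}^{\floor{N/M}}\log Z^{u[kM]}_{\beta,M\wedge(N-kM)}$, which is just the logarithm of the product in \eqref{eq:algor dist} (the index $k=\floor{N/M}$ contributes $\log 1 = 0$ when $M\mid N$, and the truncated last block $M\wedge(N-kM)$ otherwise). I would then interchange the two finite sums and, for each fixed $k$, group the leaves $u$ according to their depth-$kM$ ancestor $v=u[kM]$. Since $Z^v_{\beta,M\wedge(N-kM)}$ depends on $u$ only through $v$, it factors out of the inner sum, leaving the marginal mass $\sum_{\abs{u}=N,\;u[kM]=v}\mu_{\beta,M,N}(u)$.

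The only point needing a genuine argument is the identification of this marginal with $\mu_{\beta,M,kM}(v)$. I would deduce it from the recursion \eqref{eq:mu_decomposition}: summing that identity over $w$ with $\abs{w}=M\wedge(N-kM)$ and using that each Gibbs factor $\mu^v_{\beta,\cdot}$ is a probability measure shows that the depth-$kM$ marginal of $\mu_{\beta,M,(k+1)M\wedge N}$ is $\mu_{\beta,M,kM}$, and iterating this block by block gives that the depth-$kM$ marginal of $\mu_{\beta,M,N}$ is $\mu_{\beta,M,kM}$. (Equivalently, one can argue directly from \eqref{eq:algor dist}: write $u=vw$, split $Z_{\beta,M,N}(u)$ as $Z_{\beta,M,kM}(v)$ times the analogous product over $\mathbb{T}^v_{N-kM}$, and observe that $\sum_w e^{\beta X^v_w}/(\text{that product})=1$ because it is the total mass of the output law of Algorithm~\ref{algor} run on $\mathbb{T}^v_{N-kM}$.) Plugging $\mu_{\beta,M,kM}(v)$ back in and recombining with the $\log Z_{\beta,N}$ term yields exactly the claimed decomposition.

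The main obstacle is purely bookkeeping: keeping the block decomposition $\varnothing=u[0]<u[M]<\cdots$ and the truncated final block $M\wedge(N-kM)$ consistent across \eqref{eq:mu_decomposition}, \eqref{eq:algor dist}, and the statement, and checking that the boundary cases $M\mid N$ versus $M\nmid N$ both fit the range $k\in\llbracket 0,\floor{N/M}\rrbracket$. There is no analytic content; once the indexing is pinned down, the computation is a one-line manipulation of sums followed by the marginalization identity above.
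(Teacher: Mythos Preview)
Your proposal is correct and follows essentially the same route as the paper's proof: expand the KL divergence via \eqref{eq:Gibbs} and \eqref{eq:algor dist} to reduce to $\log Z_{\beta,N}-\sum_{\abs{u}=N}\mu_{\beta,M,N}(u)\log Z_{\beta,M,N}(u)$, decompose $\log Z_{\beta,M,N}(u)$ blockwise, swap the finite sums, and collapse the inner sum to $\mu_{\beta,M,kM}(u[kM])$ via \eqref{eq:mu_decomposition}. Your explicit justification of the marginalization step is slightly more detailed than the paper's, but the argument is the same.
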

\begin{proof}
By \eqref{def:KL} the definition of the KL divergence,
\begin{align}
&\kld{\mu_{\beta,M,N}}{\mu_{\beta,N}} \nonumber \\
&= \sum_{\abs{u}=N} \mu_{\beta,M,N}(u)\cdot (\log Z_{\beta,N} - \log Z_{\beta,M,N}(u)) &&  \text{(By \eqref{eq:Gibbs} and \eqref{eq:algor dist})} \nonumber \\
&= \sum_{\abs{u}=N} \mu_{\beta,M,N}(u)\cdot \Big(\log Z_{\beta,N} - \sum_{k=0}^{\floor{N/M}} \log Z^{u[kM]}_{\beta,M\wedge (N-kM)}\Big) \nonumber \\
&= \log Z_{\beta,N} - \sum_{\abs{u}=N}\sum_{k=0}^{\floor{N/M}} \mu_{\beta,M,N}(u)\cdot \log Z^{u[kM]}_{\beta,M\wedge (N-kM)} \nonumber \\
&= \log Z_{\beta,N} - \sum_{k=0}^{\floor{N/M}}\sum_{\abs{u}=N} \mu_{\beta,M,N}(u)\cdot \log Z^{u[kM]}_{\beta,M\wedge (N-kM)} \nonumber \\
&= \log Z_{\beta,N} - \sum_{k=0}^{\floor{N/M}} \sum_{\abs{w}=kM}\sum_{\abs{w'}=N-kM} \mu_{\beta,M,N}(ww')\cdot \log Z^{w}_{\beta,M\wedge (N-kM)} \nonumber \\
&= \log Z_{\beta,N} - \sum_{k=0}^{\floor{N/M}}\sum_{\abs{w}=kM} \mu_{\beta,M,kM}(w)\cdot \log Z^{w}_{\beta,M\wedge (N-kM)}, && \text{(By \eqref{eq:mu_decomposition})} \nonumber
\end{align}
the proof is completed.
\end{proof}

The next proposition asserts that the expectation of the KL divergence $\kld{\mu_{\beta,M,N}}{\mu_{\beta,N}}$ can be written as the difference between the free energy of the CREM and the sum of free energies on the subtrees.

\begin{proposition} 
\label{prop:KL expectation}
For all $\beta>0$ and for any two integers $M,N\in\N$ such that $M\leq N$, the expectation of the KL divergence from $\mu_{\beta,M,N}$ to $\mu_{\beta,N}$ admits the following decomposition.
\begin{align*}
\EX{\kld{\mu_{\beta,M,N}}{\mu_{\beta,N}}}
= \EX{\log Z_{\beta,N}} 
- \sum_{k=1}^{\floor{N/M}} \EX{\log Z^{(kM)}_{\beta,M\wedge (N-kM)}}.
\end{align*}
\end{proposition}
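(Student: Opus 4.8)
The plan is to take the expectation of the pathwise identity furnished by Proposition~\ref{prop:KL decomp} and to evaluate each of the finitely many summands on its right-hand side by means of the branching property of the CREM. First, note that all the expectations below are finite: by Proposition~\ref{prop:KL decomp}, $\kld{\mu_{\beta,M,N}}{\mu_{\beta,N}}$ equals $\log Z_{\beta,N}$ minus a finite sum of terms $\mu_{\beta,M,kM}(u)\,\log Z^u_{\beta,M\wedge(N-kM)}$, and each such term is integrable, since the weight lies in $[0,1]$ while $\log Z^u_{\beta,m}$ is trapped between $\beta\max_{\abs{w}=m}X^u_w$ and $m\log 2+\beta\max_{\abs{w}=m}X^u_w$, a maximum of finitely many centered Gaussians being integrable. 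Hence we may interchange the finite sums with $\EX{\cdot}$, and the claim reduces to showing that for each $k$ in the relevant range,
\begin{align*}
\EX{\sum_{\abs{u}=kM}\mu_{\beta,M,kM}(u)\,\log Z^u_{\beta,M\wedge(N-kM)}}=\EX{\log Z^{(kM)}_{\beta,M\wedge(N-kM)}};
\end{align*}
substituting this back into $\EX{\kld{\mu_{\beta,M,N}}{\mu_{\beta,N}}}$ and summing over $k$ then yields the asserted decomposition.

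Two observations will drive the computation of a single summand. First, the weight $\mu_{\beta,M,kM}(u)$ is $\F_{kM}$-measurable: by the recursion \eqref{eq:mu_decomposition}---equivalently, by the explicit formula \eqref{eq:algor dist}---it is a function of the increments $X^{u[jM]}_{w}$ over the blocks $j=0,\dots,k-1$, hence of the restriction of $\mathbf{X}$ to vertices of depth at most $kM$; moreover $\mu_{\beta,M,kM}$ is a probability measure on $\{u:\abs{u}=kM\}$, so that $\sum_{\abs{u}=kM}\mu_{\beta,M,kM}(u)=1$. Second, for any $u$ with $\abs{u}=kM$, the subtree process $\mathbf{X}^u=(X^u_w)_{uw\in\T^u_{N-kM}}$ is, by the branching property, independent of $\F_{kM}$ and distributed as $\mathbf{X}^{(kM)}$; in particular $\log Z^u_{\beta,M\wedge(N-kM)}$ is independent of $\F_{kM}$ with the law of $\log Z^{(kM)}_{\beta,M\wedge(N-kM)}$.

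Given these, we condition on $\F_{kM}$ in the left-hand side above: the $\F_{kM}$-measurable weights factor out of the conditional expectation, while $\cEX{\log Z^u_{\beta,M\wedge(N-kM)}}{\F_{kM}}$ equals the deterministic constant $\EX{\log Z^{(kM)}_{\beta,M\wedge(N-kM)}}$ by the independence just recorded. Therefore the left-hand side equals $\EX{\log Z^{(kM)}_{\beta,M\wedge(N-kM)}}\cdot\EX{\sum_{\abs{u}=kM}\mu_{\beta,M,kM}(u)}=\EX{\log Z^{(kM)}_{\beta,M\wedge(N-kM)}}$, using $\sum_{\abs{u}=kM}\mu_{\beta,M,kM}(u)=1$, which is exactly the per-$k$ identity. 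I expect the only genuinely delicate point to be the measurability/independence split underlying this conditioning: one must verify that $\mu_{\beta,M,kM}(u)$ depends on the CREM only through its values at depth $\leq kM$, while $Z^u_{\beta,M\wedge(N-kM)}$ depends only on the displacements $\mathbf{X}^u$ below $u$, so that the branching property applies as stated; the remaining ingredients---interchanging the finite sums with $\EX{\cdot}$ and the normalization $\sum_u\mu_{\beta,M,kM}(u)=1$---are entirely routine.
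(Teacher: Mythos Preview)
Your proof is correct and follows essentially the same approach as the paper: take expectations in Proposition~\ref{prop:KL decomp}, condition on $\F_{kM}$ so that the $\F_{kM}$-measurable weights $\mu_{\beta,M,kM}(u)$ factor out while the branching property reduces $\cEX{\log Z^u_{\beta,M\wedge(N-kM)}}{\F_{kM}}$ to the deterministic constant $\EX{\log Z^{(kM)}_{\beta,M\wedge(N-kM)}}$, and then use that $\mu_{\beta,M,kM}$ sums to $1$. Your explicit integrability check (bounding $\log Z^u_{\beta,m}$ between $\beta\max_{\abs{w}=m}X^u_w$ and $m\log 2+\beta\max_{\abs{w}=m}X^u_w$) is a worthwhile addition that the paper leaves implicit.
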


\begin{proof}
Combining Proposition~\ref{prop:KL decomp}, the branching property and the law of iterated expectation, we have
\begin{align}
&\EX{\kld{\mu_{\beta,M,N}}{\mu_{\beta,N}}} \nonumber \\
&= \EX{\log Z_{\beta,N}} - \EX{\sum_{k=0}^{\floor{N/M}}\sum_{\abs{u}=kM} \mu_{\beta,M,kM}(u)\cdot \log Z^{u}_{\beta,M\wedge (N-kM)}} \nonumber \\
&= \EX{\log Z_{\beta,N}} - \EX{\sum_{k=0}^{\floor{N/M}}\sum_{\abs{u}=kM} \mu_{\beta,M,kM}(u)\cdot \cEX{\log Z^{u}_{\beta,M\wedge (N-kM)}}{\F_{kM}}} \nonumber \\
&= \EX{\log Z_{\beta,N}} - \sum_{k=0}^{\floor{N/M}}\EX{\sum_{\abs{u}=kM} \mu_{\beta,M,kM}(u)} \EX{\log Z^{(kM)}_{\beta,M\wedge (N-kM)}} \nonumber \\
&= \EX{\log Z_{\beta,N}} - \sum_{k=0}^{\floor{N/M}}\EX{\log Z^{(kM)}_{\beta,M\wedge (N-kM)}}, \label{eq:mu is prob}
\end{align}
where \eqref{eq:mu is prob} follows from the fact that $\mu_{\beta,M,kM}$ is a probability measure.
\end{proof}

\subsection{Proof of Theorem~\ref{main:concentration}} \label{sec:proof of concentration}
The following argument is similar to the proof of (1.9) in \cite{KL_ejp22}, where the difference is that we use the concentration inequalities of free energies to control certain terms. 

Let $p\geq 1$. By Proposition~\ref{prop:KL decomp} and Minkowski's inequality, we have
\begin{align}
&\frac{1}{N}\norm{\kld{\mu_{\beta,M,N}}{\mu_{\beta,N}}-\EX{\kld{\mu_{\beta,M,N}}{\mu_{\beta,N}}}}_p \nonumber \\
&\leq \frac{1}{N}\sum_{k=0}^{\floor{N/M}} \norm{\sum_{\abs{u}=kM}\mu_{\beta,M,kM}(u) \cdot \left(\log Z^u_{\beta,M\wedge (N-kM)}-\EX{\log Z^u_{\beta,M\wedge (N-kM)}} \right)}_p. \label{eq:Minkowski uppbnd}
\end{align}
Applying Jensen's inequality to $\mu_{\beta,M,kM}$ and the fact that $x\mapsto\abs{x}^p$ is convex for all $p\geq 1$, we obtain
\begin{align}
&\EX{\abs{\sum_{\abs{u}=kM}\mu_{\beta,M,kM}(u) \cdot \left(\log Z^u_{\beta,M\wedge (N-kM)}-\EX{\log Z^u_{\beta,M\wedge (N-kM)}} \right)}^p} \nonumber \\
&\leq \EX{\sum_{\abs{u}=kM}\mu_{\beta,M,kM}(u) \cdot \abs{\log Z^u_{\beta,M\wedge (N-kM)}-\EX{\log Z^u_{\beta,M\wedge (N-kM)}} }^p}. \label{eq:weight sum uppbnd}
\end{align}
Then by the law of iterated expectation and the branching property, the expectation \eqref{eq:weight sum uppbnd} above equals 
\begin{align}
&\EX{\sum_{\abs{u}=kM}\mu_{\beta,M,kM}(u) \cdot \cEX{\abs{\log Z^u_{\beta,M\wedge (N-kM)}-\EX{\log Z^u_{\beta,M\wedge (N-kM)}} }^p}{\F_{kM}}} \nonumber \\
&= \EX{\abs{\log Z^{(kM)}_{\beta,M\wedge (N-kM)}-\EX{\log Z^{(kM)}_{\beta,M\wedge (N-kM)}} }^p}. \label{eq:Lp norm}
\end{align}
Now, the concentration inequality of free energies (see, Theorem 1.2 in \cite{SK_Panchenko}) implies that for all $p\geq 1$,
\begin{align}
&\EX{\abs{\log Z^{(kM)}_{\beta,M\wedge (N-kM)}-\EX{\log Z^{(kM)}_{\beta,M\wedge (N-kM)}}}^p} \nonumber \\
&\leq \int_0^\infty 2 \exp(-\frac{x^{2/p}}{4\beta^2N\Big(A\left(\frac{kM+M\wedge (N-kM)}{N}\right)-A\left(\frac{kM}{N}\right)\Big)}) \dd{x} \nonumber \\
&= \beta^p N^{p/2}\left(A\left(\frac{kM+M\wedge (N-kM)}{N}\right)-A\left(\frac{kM}{N}\right)\right)^{p/2} \cdot \underbrace{2^{p/2+1} p \int_0^\infty \exp(-\frac{y^2}{2}) y^{p-1}\dd{y}}_{\eqqcolon C_1(p)} \nonumber \\
&= \beta^p N^{p/2}\left(A\left(\frac{kM+M\wedge (N-kM)}{N}\right)-A\left(\frac{kM}{N}\right)\right)^{p/2} \cdot C_1(p). \label{eq:log Z concentration}
\end{align}
Combining \eqref{eq:Minkowski uppbnd}, \eqref{eq:Lp norm} and \eqref{eq:log Z concentration} and letting $C_2(p)=C_1(p)^{1/p}$, we derive that
\begin{align}
&\frac{1}{N}\norm{\kld{\mu_{\beta,M,N}}{\mu_{\beta,N}}-\EX{\kld{\mu_{\beta,M,N}}{\mu_{\beta,N}}}}_p \nonumber \\
&\leq \frac{\beta C_2(p)}{\sqrt{N}} \sum_{k=0}^{\floor{N/M}} \sqrt{\Big(A\left(\frac{kM+M\wedge (N-kM)}{N}\right)-A\left(\frac{kM}{N}\right)\Big)} \nonumber \\
&\leq \frac{\beta C_2(p)}{\sqrt{N}}\sqrt{\underbrace{\sum_{k=0}^{\floor{N/M}} \left(A\left(\frac{kM+M\wedge (N-kM)}{N}\right)-A\left(\frac{kM}{N}\right)\right)}_{=1}} \sqrt{\sum_{k=0}^{\floor{N/M}} 1}  \label{eq:C-S uppbnd} \\
&=\frac{\beta C_2(p)}{\sqrt{N}} \sqrt{\ceil*{\frac{N}{M}}} \nonumber \\
&\leq \frac{\beta C_2(p)\sqrt{2}}{\sqrt{M}}, \label{eq:sqrt 2/M} 
\end{align}
where \eqref{eq:C-S uppbnd} is derived from the Cauchy--Schwarz inequality, and \eqref{eq:sqrt 2/M} follows from bounding $\sqrt{\floor{N/M}/N}$ by $\sqrt{2/M}$. By choosing $C_p = C_2(p)\sqrt{2}$, the proof of Theorem~\ref{main:concentration} is completed.

\section{Asymptotics of the KL divergence \texorpdfstring{$\kld{\mu_{\beta,M,N}}{\mu_{\beta,N}}$}{}} \label{sec:asymptotics of KL}

The goal of this section is to prove Theorem~\ref{main:convergence of KL}. In view of \eqref{def:Fe} and Proposition~\ref{prop:KL expectation}, it remains to show the following proposition.
\begin{proposition}
\label{prop:renorm Fe}
Let $M_N$ be a sequence such that $M_N \in \llbracket 1,N\rrbracket$ and $M_N\rightarrow\infty$. Then,
\begin{align*}
\lim_{N\rightarrow\infty}\frac{1}{N}\sum_{k=1}^{\floor{N/M_N}} \EX{\log Z^{(kM_N)}_{\beta,M_N\wedge (N-kM_N)}} = \tilde{F}_\beta.
\end{align*}
\end{proposition}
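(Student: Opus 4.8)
The plan is to identify the sum $\frac1N\sum_{k=1}^{\lfloor N/M_N\rfloor}\E[\log Z^{(kM_N)}_{\beta,M_N\wedge(N-kM_N)}]$ as a Riemann sum for the integral $\int_0^1 f(\beta\sqrt{a(s)})\,\dd s$ and then control the error. The key observation is that $\log Z^{(n)}_{\beta,m}$ is the free energy of a CREM-type process on a subtree of depth $m$, whose covariance is governed by the function $A$ restricted to the interval $[n/N,(n+m)/N]$, rescaled. First I would establish a one-block estimate: for fixed block length $m$ and starting depth $n$,
\begin{align*}
\frac1m\E\bigl[\log Z^{(n)}_{\beta,m}\bigr] = \frac1m\int_0^m f\!\left(\beta\sqrt{N\cdot\tfrac{A((n+s)/N)-A((n+\lfloor s\rfloor)/N)}{\cdots}}\right)\dd s + o(1)
\end{align*}
— more cleanly, one shows $\frac1m\E[\log Z^{(n)}_{\beta,m}] \to \frac1m\int_{n/N}^{(n+m)/N}\! f(\beta\sqrt{a(s)})\,\dd s \cdot N$-ish, i.e. the free energy of the CREM on a window localizes to $\int_{\text{window}} f(\beta\sqrt{a(s)})\,\dd s$ when the window length $m=M_N\to\infty$ and $m/N\to 0$ (so that $a$ is roughly constant, or at least Riemann-approximable, across the window). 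Concretely, since $M_N\to\infty$ and $M_N/N\to 0$ along the relevant scale, each block of depth $M_N$ sitting at depths $[kM_N,(k+1)M_N]$ behaves like a homogeneous branching random walk with increment variance $\approx N(A(\frac{(k+1)M_N}{N})-A(\frac{kM_N}{N}))/M_N \approx a(kM_N/N)$, whose normalized free energy is $f(\beta\sqrt{a(kM_N/N)})$ by the classical BRW free-energy formula \eqref{eq:BRW Fe}.

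The steps I would carry out, in order: (1) State and prove (or cite, from Fact~\ref{fact:Fe} and standard BRW theory) a uniform one-block lemma — there is a function $\psi_\beta(N,n,m)$ with $\frac1m\E[\log Z^{(n)}_{\beta,m}]=\psi_\beta(N,n,m)$ and $|\psi_\beta(N,n,m) - f(\beta\sqrt{\bar a})|\le \eta(m)$ where $\bar a = \frac{N}{m}(A(\frac{n+m}{N})-A(\frac nN))$ is the average slope over the block and $\eta(m)\to 0$ as $m\to\infty$, the error being uniform in $n$. This is essentially subadditivity/superadditivity of free energy plus interpolation, or Slepian-type comparison with a homogeneous BRW pinned above and below by the min and max of $a$ on the block — but since $a$ need not be continuous, one uses the average slope $\bar a$ and Gaussian concentration. (2) Sum over $k$: $\frac1N\sum_k\E[\log Z^{(kM_N)}_{\beta,M_N\wedge(N-kM_N)}] = \frac{M_N}{N}\sum_k \bigl(f(\beta\sqrt{\bar a_k}) + O(\eta(M_N))\bigr)$ where $\bar a_k = \frac{N}{M_N}(A(\frac{(k+1)M_N}{N})-A(\frac{kM_N}{N}))$; the error term is $O(\eta(M_N))\cdot\frac{M_N}{N}\cdot\lceil N/M_N\rceil = O(\eta(M_N))\to 0$. (3) Recognize $\frac{M_N}{N}\sum_k f(\beta\sqrt{\bar a_k})$ as a Riemann-type sum. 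Since $A(t)=\int_0^t a(s)\dd s$, we have $\bar a_k = \frac{1}{M_N/N}\int_{kM_N/N}^{(k+1)M_N/N} a(s)\,\dd s$, the average of $a$ over the $k$-th subinterval of length $\delta_N:=M_N/N$. So $\frac{M_N}{N}\sum_k f(\beta\sqrt{\bar a_k}) = \sum_k \delta_N\, f\!\bigl(\beta\sqrt{\delta_N^{-1}\int_{k\delta_N}^{(k+1)\delta_N} a}\bigr)$. Because $a$ is bounded and Riemann integrable (hence continuous a.e.), the averages $s\mapsto \bar a(s)$ (the step function equal to $\bar a_k$ on the $k$-th interval) converge to $a$ in $L^1[0,1]$ and a.e. along a subsequence; since $f(\beta\sqrt{\cdot})$ is continuous and bounded on the bounded range of $a$, dominated convergence gives $\sum_k\delta_N f(\beta\sqrt{\bar a_k}) \to \int_0^1 f(\beta\sqrt{a(s)})\,\dd s = \tilde F_\beta$. (4) Handle the last, possibly-shorter block $M_N\wedge(N-\lfloor N/M_N\rfloor M_N)$ and the boundary term $k=\lfloor N/M_N\rfloor$ separately: its contribution to the normalized sum is at most $O(M_N/N)\to 0$ by a crude bound $0\le \E[\log Z^{(n)}_{\beta,m}]\le m\log 2 + \frac{\beta^2}{2}(\text{variance range})$, so it vanishes.

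The main obstacle I expect is Step (1): establishing that the normalized free energy of the CREM over a growing-but-sublinear window of depths $[n,n+m]$ is, up to $o(1)$ uniform in $n$, the BRW free energy $f(\beta\sqrt{\bar a})$ with $\bar a$ the average slope — the subtlety being that $a$ can be wild (merely Riemann integrable, so discontinuous on a measure-zero set that could still be dense) within the window, so one cannot simply say "the process is approximately a homogeneous BRW." The cleanest route is a two-sided Slepian/Sudakov–Fernique comparison combined with the observation that what enters the free-energy asymptotics of an inhomogeneous BRW is only the concave hull of the local covariance profile, together with a further concavification/averaging argument showing that as the window shrinks ($\delta_N\to 0$) this hull effect washes out and only the average $\bar a_k$ survives in the limit — alternatively, invoke the Bovier–Kurkova free-energy formula \eqref{def:Fe} applied to the sub-CREM on the window (whose covariance function is $A$ affinely reparametrized to $[0,1]$), giving exactly $m\int_0^1 f(\beta\sqrt{\hat a_{\text{window}}(s)})\dd s$ with $\hat a_{\text{window}}$ the hull over the window, and then show $\frac1m\int_{\text{window}} f(\beta\sqrt{\hat a_{\text{window}}}) - \frac1m\int_{\text{window}} f(\beta\sqrt a) \to 0$ uniformly as the window length $\to\infty$ while $\delta_N\to 0$, which is a real-analysis statement about hulls of Riemann-integrable functions on shrinking intervals (the hull on $[x,x+\delta]$ differs from the function in $L^1[x,x+\delta]$-average by $o(\delta)$ as $\delta\to 0$, for a.e.\ $x$, which suffices after summing). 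If the paper's Fact~\ref{fact:Fe} already furnishes the window free-energy formula, then the bulk of the work is this hull-versus-function $L^1$ estimate plus the dominated-convergence Riemann-sum argument, and I would organize the writeup around those two points.
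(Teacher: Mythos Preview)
Your proposal actually contains the paper's argument as one option and then discards it for the wrong reason. The paper's Lemma~\ref{lem:sandwich} does exactly the ``Slepian-type comparison with a homogeneous BRW pinned above and below by the min and max of $a$ on the block'' that you mention: by Kahane's inequality, $\frac{1}{M_N}\E[\log Z^{(kM_N)}_{\beta,M_N}]$ is sandwiched between the normalized free energies $f_{M_N}(\beta\sqrt{a_k^{-}})$ and $f_{M_N}(\beta\sqrt{a_k^{+}})$ of homogeneous BRWs with increment variances $a_k^{-}=\essinf_{I_k}a$ and $a_k^{+}=\esssup_{I_k}a$. A uniform-in-$\beta$ estimate $|f_M(\beta)-f(\beta)|\le \varepsilon\beta$ (Lemma~\ref{lem:uniform}) turns these into $f(\beta\sqrt{a_k^{\pm}})\pm\varepsilon\beta\sqrt{a_k^{\pm}}$; summing over $k$ produces upper and lower Darboux sums for $\int_0^1 f(\beta\sqrt{a(s)})\dd s$, and Riemann integrability of $a$ together with continuity of $f(\beta\sqrt{\,\cdot\,})$ is \emph{precisely} what makes those converge. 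Continuity of $a$ plays no role, so your ``but since $a$ need not be continuous'' is a wrong turn.

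Your preferred alternative through the average slope $\bar a_k$ has a gap at Step~(1). The one-block estimate $\bigl|\tfrac1m\E[\log Z^{(n)}_{\beta,m}]-f(\beta\sqrt{\bar a})\bigr|\le\eta(m)$ with $\eta(m)\to0$ uniformly in the window position is false in general: if on some window $a$ equals $2$ on the first third, $0$ on the middle third, and $1$ on the last third, then $\bar a=1$ but the large-$m$ window free energy is $\tfrac13 f(\beta\sqrt2)+\tfrac23 f(\beta/\sqrt2)$, which for large $\beta$ is strictly below $f(\beta)$. The window free energy tracks the local concave hull, not the average. Your fallback via $\hat a_{\text{window}}$ can be repaired, but only by observing that both $\hat a_{\text{window}}$ and $a$ lie in $[a_k^{-},a_k^{+}]$ on each block, so that the hull-versus-$a$ discrepancy is controlled by the oscillation $a_k^{+}-a_k^{-}$ --- at which point you are back to Darboux sums and might as well run the direct Kahane comparison from the start. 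The one genuinely nontrivial ingredient your outline does not supply is the uniformity in $\beta$ of the convergence $f_M\to f$ (needed because different blocks carry different effective temperatures $\beta\sqrt{a_k^{\pm}}$); the paper extracts it via a Dini-type monotonicity argument for $\beta\mapsto f_M(\beta)/\beta-(\log 2)/\beta$.
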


The proof of Proposition~\ref{prop:renorm Fe} is postponed to Section~\ref{sec:proof of prop:renorm Fe}. 
Conditioned on Proposition~\ref{prop:renorm Fe}, we are now ready to prove Theorem~\ref{main:convergence of KL}.

\begin{proof}[Proof of Proposition~\ref{main:convergence of KL}]
Fix $\beta < \beta_G$. Let $M_N$ be a sequence such that $M_N\in\llbracket 1,N\rrbracket$, and $M_N\rightarrow\infty$. By Fact~\ref{fact:Fe} and Proposition~\ref{prop:renorm Fe}, 
\begin{align}
\lim_{N\rightarrow\infty} \frac{1}{N}\EX{\kld{\mu_{\beta,M_N,N}}{\mu_{\beta,N}}}
&= \lim_{N\rightarrow\infty} \frac{1}{N}\EX{\log Z_{\beta,N}} 
- \lim_{N\rightarrow\infty} \frac{1}{N}\sum_{k=1}^{\floor{N/M_N}} \EX{\log Z^{(kM_N)}_{\beta,M_N\wedge (N-kM_N)}}
 \nonumber \\
&= F_\beta - \tilde{F}_\beta. \nonumber
\end{align}
Now, by Proposition~\ref{prop:FE comparison}, $F_\beta - \tilde{F}_\beta = 0$ for all $\beta\leq \beta_G$ and $F_\beta - \tilde{F}_\beta >0 $ for all $\beta> \beta_G$. This completes the proof.
\end{proof}

\subsection{Proof of Proposition~\ref{prop:renorm Fe}} \label{sec:proof of prop:renorm Fe}

The proof of Proposition~\ref{prop:renorm Fe} is based on the following lemma.

\begin{lemma}
\label{lem:sandwich}
Let $M_N$ be a sequence such that $M_N\in \llbracket 1,N\rrbracket$ and $M_N\rightarrow\infty$.
For all $k\in\llbracket 0,\floor{N/M_N}\rrbracket$, define
\begin{align}
a_k^- \coloneqq \essinf_{t\in [\frac{kM_N}{N},\frac{(k+1)M_N}{N}]} a(t)
\quad \text{and}
\quad a_k^+ \coloneqq \esssup_{t\in [\frac{kM_N}{N},\frac{(k+1)M_N}{N}]} a(t).
\end{align} 
Then for all $\varepsilon>0$, there exists $N_0\in\N$ such that for all $N\geq N_0$, 
\begin{align*}
f(\beta\sqrt{a}^-_k) - \varepsilon\beta\sqrt{a}^-_k
\leq \frac{1}{M_N}\EX{\log Z^{(kM)}_{\beta,M}}
\leq 
f(\beta\sqrt{a}^+_k) + \varepsilon\beta\sqrt{a}^+_k,
\end{align*}
for all $k\in\llbracket 0,\floor{N/M_N}\rrbracket$. 
\end{lemma}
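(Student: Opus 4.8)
\emph{Proof strategy.} Write $M=M_N$ and fix a block index $k\in\llbracket 0,\floor{N/M}\rrbracket$. The starting point is that, up to the common additive term $\beta X^{(kM)}_{\varnothing}$ (which has mean $0$, hence does not affect $\EX{\log Z^{(kM)}_{\beta,M}}$), the random variable $Z^{(kM)}_{\beta,M}$ is the partition function at inverse temperature $\beta$ of a depth-$M$ binary branching random walk whose increment from level $j-1$ to level $j$ is centred Gaussian with variance
\[
\sigma_{k,j}^{2}\;:=\;N\Big(A\big(\tfrac{kM+j}{N}\big)-A\big(\tfrac{kM+j-1}{N}\big)\Big)\;=\;N\!\!\int_{(kM+j-1)/N}^{(kM+j)/N}\!\!a(s)\,\dd{s},\qquad j\in\llbracket 1,M\rrbracket,
\]
with the convention $a\equiv 0$ on $(1,\infty)$ for the (possibly truncated) last block. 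By the very definition of $a^{-}_{k}$ and $a^{+}_{k}$, we have $a^{-}_{k}\le\sigma_{k,j}^{2}\le a^{+}_{k}$ for every $j$, so pathwise this walk sits between the two \emph{homogeneous} branching random walks with all increment variances equal to $a^{-}_{k}$, respectively to $a^{+}_{k}$.

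First I would turn this sandwich of increment variances into a sandwich of $\EX{\log Z}$ by a Gaussian interpolation (Slepian--Kahane type) comparison. For centred Gaussian fields $(G_u)$, $(G'_u)$ indexed by $\partial\T_M$ with $\EX{(G_u-G_v)^2}\le\EX{(G'_u-G'_v)^2}$ for all $u,v$, interpolating $G_u(t)=\sqrt{1-t}\,G_u+\sqrt{t}\,G'_u$ and differentiating by Gaussian integration by parts gives
\[
\frac{d}{dt}\,\EX{\log\sum_{u}e^{\beta G_u(t)}}\;=\;\frac{\beta^2}{4}\,\EX{\sum_{u,v}\nu_t(u)\,\nu_t(v)\,\frac{d}{dt}\EX{(G_u(t)-G_v(t))^2}}\;\ge\;0,
\]
where $\nu_t$ is the Gibbs measure associated with $G(t)$; consequently $\EX{\log\sum_u e^{\beta G_u}}\le\EX{\log\sum_u e^{\beta G'_u}}$. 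Because $\sum_{j=\ell+1}^{M}\sigma_{k,j}^{2}\le a^{+}_{k}(M-\ell)$, applying this with $G$ the walk above and $G'$ the homogeneous walk of increment variance $a^{+}_{k}$ — equivalently, $\sqrt{a^{+}_{k}}$ times a standard binary branching random walk $(B_u)$ with $\EX{B_uB_v}=\abs{u\wedge v}$ — and symmetrically with $a^{-}_{k}$, we obtain
\[
W_{\beta\sqrt{a^{-}_{k}},\,M}\;\le\;\EX{\log Z^{(kM)}_{\beta,M}}\;\le\;W_{\beta\sqrt{a^{+}_{k}},\,M},\qquad\text{where}\quad W_{\lambda,m}:=\EX{\log\sum_{\abs{u}=m}e^{\lambda B_u}}.
\]

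It then remains to estimate $W_{\lambda,m}$, and the plan is to show that, for every $\varepsilon>0$ (which we may assume small), there is $m_0$ so that $m\ge m_0$ implies $\big|\tfrac1m W_{\lambda,m}-f(\lambda)\big|\le\varepsilon\lambda$ for every $\lambda$ in the fixed compact interval $[\,0,\ \beta(\esssup_{[0,1]}a)^{1/2}\,]$, which contains all the values $\beta\sqrt{a^{\pm}_{k}}$ occurring above; combined with the display and with $M_N\to\infty$, this gives the lemma, uniformly over $k$, for all large $N$. For the estimate on $W_{\lambda,m}$: the convergence $\tfrac1m W_{\lambda,m}\to f(\lambda)$ is the classical branching random walk free energy limit (it is Fact~\ref{fact:Fe} applied to $A=\mathrm{id}$), and since $\lambda\mapsto\tfrac1m W_{\lambda,m}$ is convex, the convergence is uniform on compact $\lambda$-sets; for $\lambda$ bounded away from $0$ this already yields an error of the form $\varepsilon\lambda$. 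For $\lambda$ near $0$, where an additive error is too coarse, I would instead use the elementary two-sided bound $\log 2\le\tfrac1m W_{\lambda,m}\le\log 2+\tfrac{\lambda^2}{2}$ (Jensen's inequality, applied to $\exp$ for the lower bound and to $\log$ for the upper), which, together with the identity $f(\lambda)=\log 2+\tfrac{\lambda^2}{2}$ in the subcritical range, gives $\big|\tfrac1m W_{\lambda,m}-f(\lambda)\big|\le\tfrac{\lambda^2}{2}\le\varepsilon\lambda$ as soon as $\lambda\le 2\varepsilon$; the two ranges together cover the whole interval.

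The only genuinely new ingredient is the Gaussian comparison, which is short. The point requiring care — and the reason this lemma is relegated to an appendix — is the \emph{uniformity in $\lambda$} of the branching random walk estimate, in the multiplicative form $\varepsilon\lambda$: the error has to stay negligible simultaneously over all $\sim N/M_N$ blocks, including the degenerate blocks with $a^{\pm}_{k}$ close to $0$ and those with $\beta\sqrt{a^{\pm}_{k}}$ near the critical value $\sqrt{2\log 2}$, so that it does not contaminate the Riemann sum $\tfrac{M_N}{N}\sum_k f(\beta\sqrt{a^{\pm}_{k}})$ once Proposition~\ref{prop:renorm Fe} is deduced. Convexity of $\lambda\mapsto\tfrac1m W_{\lambda,m}$ is exactly what renders this uniformity cheap, sparing any quantitative estimate on the rate of convergence of the branching random walk free energy near criticality.
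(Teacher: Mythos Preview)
Your proof is correct, and the first step --- the Gaussian interpolation giving the sandwich
\[
W_{\beta\sqrt{a^{-}_{k}},\,M}\;\le\;\EX{\log Z^{(kM)}_{\beta,M}}\;\le\;W_{\beta\sqrt{a^{+}_{k}},\,M}
\]
--- is exactly the paper's argument: the paper invokes Kahane's inequality (Theorem~3.11 in Ledoux--Talagrand), which is precisely the interpolation you sketch.

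The second step is where you take a genuinely different route. The paper proves the uniform estimate $|f_M(\lambda)-f(\lambda)|\le\varepsilon\lambda$ for \emph{all} $\lambda\ge0$ (Lemma~\ref{lem:uniform}) by passing to $g_M(\lambda)=(f_M(\lambda)-\log 2)/\lambda$, showing via a Jensen argument that $g_M$ is non-decreasing in $\lambda$, via superadditivity and Fekete's lemma that $g_M\le g$, and via the branching random walk maximum asymptotics that $g_M(\infty)\to g(\infty)$; a Dini-type argument on $[0,\infty]$ then upgrades pointwise convergence to uniform. You instead exploit the boundedness of $a$ to work on the fixed compact $[0,\Lambda]$, split off small $\lambda$ with the elementary Jensen sandwich $\log 2\le f_m(\lambda)\le\log 2+\lambda^2/2$, and handle $[2\varepsilon,\Lambda]$ by the standard fact that pointwise convergence of convex functions is locally uniform. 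Your route is lighter --- it avoids superadditivity and the first-order maximum asymptotics entirely --- at the price of yielding the estimate only on a compact $\lambda$-range; since the application only needs $\lambda\le\beta(\esssup a)^{1/2}$, nothing is lost. The paper's route, on the other hand, gives the multiplicative bound uniformly over the whole half-line, which is a slightly stronger standalone statement.
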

The proof of Lemma~\ref{lem:sandwich} is based on comparing the free energy of the CREM with the free energy of the so-called branching random walk, which is a CREM with $A$ equal to the identity function. While the proof of Lemma~\ref{lem:sandwich} is rather standard, the proof requires some standard properties of the free energy of the branching random walk, so we postpone the proof of Lemma~\ref{lem:sandwich} to Appendix~\ref{ann:proof of lem:sandwich}.

We now proceed to the proof of Proposition~\ref{prop:renorm Fe}.

\begin{proof}[Proof of Proposition~\ref{prop:renorm Fe}]
Fix $\varepsilon>0$. Fix $M_N$ being a sequence such that $M_N\in \llbracket 1,N\rrbracket$ and $M_N\rightarrow\infty$. We denote $K_N=\floor{N/M_N}$ for simplicity. First of all, note that
\begin{align}
\frac{1}{N}\sum_{k=1}^{K_N} \EX{\log Z_{\beta,M_N}^{(kM_N)}}
= \frac{1}{N}\sum_{k=1}^{K_N-1} \EX{\log Z_{\beta,M_N}^{(kM_N)}} + \frac{1}{N}\EX{\log Z_{\beta,N-K_N M_N}^{(K_N M_N)}}. \label{eq:tilde Fe sum}
\end{align}
We claim that the second term of \eqref{eq:tilde Fe sum} converges to $0$. For any $\abs{u}=N-\floor{N/M_N}M_N$,
\begin{align*}
\EX{\log Z_{\beta,N-\floor{N/M_N}M_N}^{(\floor{N/M_N}M_N)}} \geq \EX{\beta X_u^{(\floor{N/M_N}M_N)}} = 0.
\end{align*}
Now, we turn to the upper bound. By Jensen's inequality,
\begin{align}
\frac{1}{N} \EX{\log Z_{\beta,N-\floor{N/M_N}M_N}^{\floor{N/M_N}M_N}}
&\leq \frac{1}{N}\log \EX{Z_{\beta,N-\floor{N/M_N}M_N}^{(\floor{N/M_N}M_N)}} \nonumber \\
&= \log 2 \left(1-\floor*{\frac{N}{M_N}}\frac{M_N}{N}\right) 
+ \left(A(1) - A\left(\floor*{\frac{N}{M_N}}\frac{M_N}{N}\right)\right)
\rightarrow 0, \nonumber
\end{align}
as $N\rightarrow\infty$, which proves that the second term of \eqref{eq:tilde Fe sum} converges to $0$.

It remains to show that the first term of \eqref{eq:tilde Fe sum} converges to $\tilde{F}_\beta$. By Lemma~\ref{lem:sandwich},
\begin{align}
\frac{M_N}{N}\sum_{k=1}^{K_N-1} f(\beta\sqrt{a}^-_k) - \varepsilon\beta\sqrt{a}^-_k
\leq \frac{1}{N} \sum_{k=1}^{K_N-1} \EX{\log Z^{(kM)}_{\beta,M}}
\leq 
\frac{M_N}{N}\sum_{k=1}^{K_N-1} f(\beta\sqrt{a}^+_k) + \varepsilon\beta\sqrt{a}^+_k.
\label{eq:second sandwich}
\end{align}
Because the function $a(\cdot)$ is Riemann integrable and $f(\beta \sqrt{\cdot})$ is continuous, their composition $f(\beta \sqrt{a(\cdot)})$ is also Riemann integrable. Similarly, $\sqrt{a(\cdot)}$ is also Riemann integrable. Thus, by taking $N\rightarrow\infty$, \eqref{eq:second sandwich} yields
\begin{align}
\lim_{N\rightarrow\infty}\abs{\frac{1}{N} \sum_{k=1}^{K_N-1} \EX{\log Z^{(kM)}_{\beta,M}} - \int_0^1 f(\beta\sqrt{a(s)})\dd{s}} \leq \varepsilon \beta \int_0^1 \sqrt{a(s)}\dd{s}. \label{eq:eps upp}
\end{align}
Since $\varepsilon>0$ is arbitrary chosen, \eqref{eq:eps upp} implies that
\begin{align*}
\lim_{N\rightarrow\infty}\frac{1}{N} \sum_{k=1}^{K_N-1} \EX{\log Z^{(kM)}_{\beta,M}}
= \int_0^1 f(\beta\sqrt{a(s)})\dd{s}
= \tilde{F}_\beta,
\end{align*}
as desired.
\end{proof}

\section{A property of the Gibbs measure in the supercritical regime} \label{sec:supercritical Gibbs}

From now on, we assume that $A$ is non-concave, so $\beta_G<\infty$. Also, we suppose that $\beta>\beta_G$. The goal of this section is to show that the Gibbs measure tends to sample a vertex that has an ancestor that jumps exceptionally high. The meaning of having an ancestor that jumps exceptionally high is quantified in the following definition.
\begin{definition}
\label{def:bad ancestor}
Given $z>0$, $K\in\N$ and a CREM $\mathbf{X}$, a vertex $v\in\T_N$ with $\abs{v}=n\in\llbracket 1, N\rrbracket$ is said to have a $(z,K,\mathbf{X})$-steep ancestor if there exists $k\in \llbracket 1,\floor{nK/N}\rrbracket$ such that 
\begin{align*}
X_{v[\floor{Nk/K}]} - X_{v[\floor{N(k-1)/K}]} > N \sqrt{2\log 2 (1+z) a_k},
\end{align*}
where $a_k = (A(k/K)-A((k-1)/K))/K$. 
\end{definition}

The goal of this section can now be phrased as the following proposition.
\begin{proposition}
\label{prop:Gibbs sampling}
Let $\beta>\beta_G$. There exist $z>0$, $K\in\N$ such that, for all $\delta>0$ sufficiently small,
\begin{align*}
\lim_{N\rightarrow\infty } \PR{\sum_{\abs{u}=N} \mu_{\beta,N}(u)\Ind\{\text{$u$ has a $(z,K,\mathbf{X})$-steep ancestor\}} > 1-e^{-\delta N} } = 1.
\end{align*}
\end{proposition}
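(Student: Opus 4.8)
\textbf{Proof proposal for Proposition~\ref{prop:Gibbs sampling}.}

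The plan is to show that the Gibbs mass carried by vertices \emph{without} a steep ancestor is exponentially small, by a first-moment (expected mass) computation together with the concentration of the free energy $\log Z_{\beta,N}$ around its mean $N F_\beta$. Write $G_N$ for the set of leaves $u$ with $\abs{u}=N$ having \emph{no} $(z,K,\mathbf{X})$-steep ancestor; we must bound $\sum_{\abs{u}=N}\mu_{\beta,N}(u)\Ind\{u\in G_N\} = Z_{\beta,N}^{-1}\sum_{u\in G_N} e^{\beta X_u}$. First I would estimate $\EX{\sum_{u\in G_N} e^{\beta X_u}}$ by conditioning block by block on the increments $X_{v[\floor{Nk/K}]}-X_{v[\floor{N(k-1)/K}]}$ along each of the $K$ renormalized levels: the constraint $u\in G_N$ forces each such increment to stay below $N\sqrt{2\log 2(1+z)a_k}$, which is \emph{below} the natural BRW ceiling when the local slope $a_k$ is large (this is exactly where non-concavity and $\beta>\beta_G$ enter). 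Using the Gaussian tail / truncated-exponential-moment estimate on each block and the independence furnished by the branching property, one obtains $\frac1N\log\EX{\sum_{u\in G_N}e^{\beta X_u}} \le \sum_{k=1}^{K} \tfrac1K\, g\!\left(\beta\sqrt{K a_k}\,;\,z\right) + o(1)$, where $g(\cdot\,;\,z)$ is the free-energy functional of a BRW with a hard truncation of its displacements; as $z\downarrow 0$ and $K\to\infty$ along the discretization of $A$, this converges to the ``truncated'' free energy, which is $\tilde F_\beta$ up to a small error (truncating at the identity slope $a(t)$ rather than $\hat a(t)$). The point is that for $\beta>\beta_G$ this quantity is \emph{strictly less} than $F_\beta$: indeed $F_\beta-\tilde F_\beta>0$ by Proposition~\ref{main:convergence of KL}, so we may fix $z>0$ small and $K$ large so that
\[
\frac1N\log\EX{\sum_{u\in G_N} e^{\beta X_u}} \le F_\beta - 2\eta
\]
for some $\eta>0$ and all large $N$.

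Next I would invoke Markov's inequality to turn the first-moment bound into a high-probability statement: $\PR{\sum_{u\in G_N} e^{\beta X_u} \ge e^{(F_\beta-\eta)N}} \le e^{-\eta N}\to 0$. On the other hand, the lower tail of the free energy gives $\PR{\log Z_{\beta,N} \le (F_\beta-\tfrac{\eta}{2})N}\to 0$; this follows from the Gaussian concentration inequality for $\log Z$ (Theorem~1.2 in \cite{SK_Panchenko}, already used in Section~\ref{sec:proof of concentration}) together with $\frac1N\EX{\log Z_{\beta,N}}\to F_\beta$ from \eqref{def:Fe}. On the intersection of the two favorable events, which has probability approaching $1$,
\[
\sum_{\abs{u}=N}\mu_{\beta,N}(u)\Ind\{u\in G_N\} = \frac{\sum_{u\in G_N} e^{\beta X_u}}{Z_{\beta,N}} \le \frac{e^{(F_\beta-\eta)N}}{e^{(F_\beta-\eta/2)N}} = e^{-\eta N/2}.
\]
Taking $\delta = \eta/2$ (and noting the statement allows any smaller $\delta$) yields the claim, since the complementary mass $\sum_{\abs{u}=N}\mu_{\beta,N}(u)\Ind\{u\text{ has a steep ancestor}\}$ is then at least $1-e^{-\delta N}$ with probability tending to $1$.

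I expect the main obstacle to be the first-moment estimate over $G_N$: one must carefully organize the block decomposition so that the truncation thresholds $N\sqrt{2\log 2(1+z)a_k}$ interact correctly with the per-block exponential moments, and verify that the resulting ``truncated free energy'' functional is continuous in $z$ at $z=0$ and converges, as $K\to\infty$, to something strictly below $F_\beta$ precisely when $\beta>\beta_G$. The cleanest route is probably to compare with a genuine BRW on each block (a CREM with $A=\mathrm{id}$, as in the proof of Lemma~\ref{lem:sandwich}): for a homogeneous BRW of length $m$ with increment variance $\sigma^2$ per step, truncating the total displacement at level $m\sqrt{2\log2(1+z)}\,\sigma$ changes the free energy by an amount that vanishes as $z\to 0$ when $\beta\sigma\le\sqrt{2\log 2}$ but is a genuine strict loss when $\beta\sigma>\sqrt{2\log 2}$ (the regime where the unconstrained BRW Gibbs measure lives on atypically high leaves). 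Summing these block contributions against the discretization $a_k\approx a(k/K)$ and using that $\{A\ne\hat A\}$ has positive measure with $\esssup_{\{A\ne\hat A\}}\sqrt{a(t)} = \sqrt{2\log2}/\beta_G < \sqrt{2\log 2}/\beta$ would then give the strict inequality $F_\beta-2\eta$ above. Everything else — Markov, the free-energy concentration, and assembling the two events — is routine given the tools already developed in the paper.
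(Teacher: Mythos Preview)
Your proposal is correct and follows essentially the same approach as the paper: bound the first moment of the restricted partition function $\sum_{u\in G_N} e^{\beta X_u}$ block by block via truncated Gaussian exponential moments, choose $z>0$ small and $K$ large so that this first moment grows like $e^{(F_\beta-2\eta)N}$ using the strict inequality $F_\beta>\tilde F_\beta$ for $\beta>\beta_G$, then combine Markov's inequality with the concentration of $\log Z_{\beta,N}$ around $NF_\beta$. The paper carries out the block computation directly by completing the square and a Chernoff bound on each increment (no BRW comparison is needed, since the truncation acts on a single Gaussian per block), arriving at the bound $\limsup_N \tfrac1N\log\EX{\sum e^{\beta X_u}\Ind_{A_u}}\le (1+z)\tfrac1K\sum_k\max_s f(\beta\sqrt{a(s)})$, which is exactly the quantity you need to be $<F_\beta$; your anticipated ``obstacle'' about BRW-with-truncation is thus more elaborate than what is actually required.
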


The proof of Proposition~\ref{prop:Gibbs sampling} is based on the following lemma which states the free energy converges to $F_\beta$ in probability.
\begin{lemma}
\label{lem:concentration of Fe}
For all $\beta>0$, for all $\varepsilon>0$, we have 
\begin{align*}
\lim_{N\rightarrow\infty}\PR{\abs{\frac{1}{N}\log Z_{\beta,N} - F_\beta}>\varepsilon} = 0.
\end{align*}
\end{lemma}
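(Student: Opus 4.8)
The plan is to combine the $L^p$-concentration of the free energy around its mean with the known convergence of the mean, $\frac1N\EX{\log Z_{\beta,N}}\to F_\beta$, which is exactly Fact~\ref{fact:Fe} (equation~\eqref{def:Fe}). Writing
\[
\frac1N\log Z_{\beta,N} - F_\beta
=
\frac1N\bigl(\log Z_{\beta,N}-\EX{\log Z_{\beta,N}}\bigr)
+
\Bigl(\frac1N\EX{\log Z_{\beta,N}}-F_\beta\Bigr),
\]
the second term on the right is deterministic and tends to $0$, so it suffices to show that the first term tends to $0$ in probability. For this I would invoke the Gaussian concentration inequality for the free energy (Theorem~1.2 in \cite{SK_Panchenko}, already used in Section~\ref{sec:proof of concentration}): since $X$ is a centered Gaussian field with $\max_{\abs u=N}\EX{X_u^2}=N A(1)=N$, one has
\[
\PR{\abs[\big]{\log Z_{\beta,N}-\EX{\log Z_{\beta,N}}}>t}
\le
2\exp\!\Bigl(-\frac{t^2}{4\beta^2 N}\Bigr).
\]
Taking $t=\eps N$ gives a bound $2\exp(-\eps^2 N/(4\beta^2))\to0$, which is more than enough. (Alternatively, the $p=1$ case of Theorem~\ref{main:concentration} combined with Chebyshev would also work, but applying the sub-Gaussian tail directly is cleanest.)

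Putting the two pieces together: fix $\eps>0$. By~\eqref{def:Fe} choose $N_1$ so that $\abs{\frac1N\EX{\log Z_{\beta,N}}-F_\beta}<\eps/2$ for $N\ge N_1$. Then for $N\ge N_1$,
\[
\PR{\abs[\Big]{\tfrac1N\log Z_{\beta,N}-F_\beta}>\eps}
\le
\PR{\abs[\big]{\log Z_{\beta,N}-\EX{\log Z_{\beta,N}}}>\tfrac{\eps N}{2}}
\le
2\exp\!\Bigl(-\frac{\eps^2 N}{16\beta^2}\Bigr),
\]
which converges to $0$ as $N\to\infty$. This proves the claim.

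There is essentially no obstacle here: the only mild point of care is making sure the constant in the Gaussian concentration bound is expressed in terms of the correct variance proxy, namely $\sup_{\abs u=N}\EX{X_u^2}=N$ (not $N$ times $A$ evaluated somewhere else), but this is immediate from $A(1)=1$. Everything else is a routine triangle-inequality split of a random quantity into "fluctuation plus deterministic drift," with the fluctuation killed by concentration and the drift killed by Fact~\ref{fact:Fe}.
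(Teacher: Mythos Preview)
Your proof is correct and follows essentially the same approach as the paper: apply the Gaussian concentration inequality for the free energy (Theorem~1.2 in \cite{SK_Panchenko}) to control the fluctuation term, and invoke Fact~\ref{fact:Fe} for the convergence of the mean. The paper's write-up is slightly terser but uses exactly these two ingredients in the same way.
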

\begin{proof}
For all $\beta>0$, for all $\varepsilon>0$, the concentration inequality of free energies (see, Theorem 1.2 in \cite{SK_Panchenko}) states that
\begin{align*}
\PR{\abs{\frac{1}{N}\log Z_{\beta,N} - \frac{1}{N}\EX{\log Z_{\beta,N}}}>\varepsilon} \leq 2 \exp(-\frac{\varepsilon^2}{4\beta^2}N).
\end{align*}
Then, the proof is completed by incorporating Fact~\ref{fact:Fe}.
\end{proof}

We now prove Proposition~\ref{prop:Gibbs sampling}.

\begin{proof}[Proof of Proposition~\ref{prop:Gibbs sampling}]
For all $u\in\partial\T_N$, let $A_u$ be the set where $u$ does not have a $(z,K,\mathbf{X})$-steep ancestor defined as
\begin{align}
A_u\coloneqq \left\{\forall k\in \llbracket 1, K\rrbracket : X_{u[\floor{Nk/K}]}-X_{u[\floor{N(k-1)/K}]} \leq N\sqrt{2\log 2(1+z) a_k}\right\}. \label{eq:def of A_u}
\end{align}
To prove Proposition~\ref{prop:Gibbs sampling}, it suffices to show that there exist $K\in\N$ and $z>0$ such that for all $\delta>0$ sufficiently small,
\begin{align*}
\limsup_{N\rightarrow\infty} \PR{\sum_{\abs{u}=N} \frac{e^{\beta X_u}}{Z_{\beta,N}}\Ind_{A_u} \geq e^{-\delta N} } = 0,
\end{align*}
Because the function $a(\cdot)$ is Riemann integrable and $f(\beta \sqrt{\cdot})$ is continuous, their composition $f(\beta \sqrt{a(\cdot)})$ is also Riemann integrable. On the other hand, since $\beta > \beta_G$, Proposition~\ref{prop:FE comparison} implies thats $F_\beta - \tilde{F}_\beta > 0$. Therefore, we can choose $z>0$ sufficiently small and $K\in\N$ sufficiently large such that
\begin{align}
F_\beta - (1+z)\frac{1}{K}\sum_{k=1}^K \max_{s\in [(k-1)/K,k/K]}f(\beta\sqrt{a(s)}) \geq C > 0, \label{eq:good parameters}
\end{align}
for some $C>0$. In the rest of the proof, we fix our choice of $z$ and $K$. We also fix $\delta>0$ and $c>0$ sufficiently small such that $C-\delta-c>0$.

Now,
\begin{align}
&\PR{\sum_{\abs{u}=N} \frac{e^{\beta X_u}}{Z_{\beta,N}}\Ind_{A_u} \geq e^{-\delta N}} \nonumber \\
&\leq \PR{Z_{\beta,N} < \exp(F_\beta(1-c)N)} \nonumber \\
&+\PR{\Bigg\{\sum_{\abs{u}=N} \frac{e^{\beta X_u}}{Z_{\beta,N}}\Ind_{A_u} \geq e^{-\delta N} \Bigg\} \cap \Bigg\{Z_{\beta,N} \geq \exp(F_\beta (1-c)N)\Bigg\}}
\nonumber \\
&\leq 
\PR{Z_{\beta,N} < \exp(F_{\beta}(1-c)N)}
+\PR{\sum_{\abs{u}=N} e^{\beta X_u}\Ind_{A_u} \geq e^{-\delta N}e^{F_\beta(1-c)N}}. \label{eq:Z^G upper bound}
\end{align}
By Lemma~\ref{lem:concentration of Fe}, the first term in \eqref{eq:Z^G upper bound} tends to $0$ as $N\rightarrow\infty$. Thus, it remains to prove the second probability in \eqref{eq:Z^G upper bound} converges to $0$ as $N\rightarrow\infty$. Let $Y_k \sim N(0,NKa_{k,N})$ and $a_{k,N} \coloneqq (A(\floor{kN/K}/N)-A(\floor{(k-1)N/K}/N))/K$ for all $k\in\llbracket 1,K\rrbracket$. By completing the square, we have
\begin{align}
&\EX{\sum_{\abs{u}=N} e^{\beta X_u}\Ind_{A_u}} \nonumber \\
&\leq 2^N \prod_{k=1}^K e^{\beta^2NKa_{k,N}/2} \PR{Y_k \leq N(\sqrt{2\log 2 (1+z) a_k}-\beta Ka_{k,N})} \nonumber \\
&= \prod_{k=1}^K 2^{N/K} e^{\beta^2NKa_{k,N}/2} \PR{Y_k \leq N(\sqrt{2\log 2 (1+z) a_k}-\beta K a_{k,N})}. \label{eq:expect B uppbnd} 
\end{align}
\paragraph{Case 1.} If $\sqrt{2\log 2 (1+z) a_k} < \beta K a_{k,N}$, the Chernoff bound yields 
\begin{align*}
&2^{N/K} e^{\beta^2N K a_{k,N}/2} \PR{Y_k \leq N(\sqrt{2\log 2 (1+z) a_k}-\beta K a_{k,N}} \\
&\leq 2^{N/K} e^{\beta^2N K a_{k,N}/2} \exp(-\frac{N^2(\sqrt{2\log 2 (1+z) a_k}-\beta K a_{k,N})^2}{2NKa_{k,N}}) \\
&= 2^{N/K} e^{\beta^2N K a_{k,N}/2} \exp(-N\log 2(1+z)a_k/Ka_{k,N}) \\
&\quad\quad\quad\quad\quad\quad\quad\quad\quad\quad\quad\quad\quad 
\cdot \exp(\beta N \sqrt{2\log 2 (1+z) a_k})\exp(-N\beta^2 K a_{k,N}/2) \\
&= \underbrace{\exp(-N(\log 2)za_k/Ka_{k,N})}_{\leq 1}\exp(N \log 2 (1 - a_k/a_{k,N})/K)\exp(N\beta\sqrt{2\log 2 (1+z) a_k}) \\
&\leq \exp(N \log 2 (1 - a_k/a_{k,N})/K) \exp(N\beta\sqrt{2\log 2 (1+z) a_k}).
\end{align*}
\paragraph{Case 2.} If $\sqrt{2\log 2 (1+z) a_k} \geq \beta K a_{k,N}$, we simply bound the probability in \eqref{eq:expect B uppbnd} by 1 and obtain
\begin{align*}
&2^{N/K} e^{\beta^2N K a_{k,N}/2} \PR{Y_k \leq N(\sqrt{2\log 2 (1+z) a_k}-\beta K a_k)} \\
&\leq 2^{N/K} e^{\beta^2N K a_{k,N}/2}
\end{align*}
Then, by \eqref{eq:expect B uppbnd} and the two cases above, we have
\begin{align}
&\limsup_{N\rightarrow\infty}\frac{1}{N}\log \EX{\sum_{\abs{u}=N} e^{\beta X_u}\Ind_{A_u}} \nonumber \\
&\leq \sum_{\sqrt{2\log 2 (1+z) a_k} < \beta K a_{k}} \beta\sqrt{2\log 2 (1+z) a_k}
+ \sum_{\sqrt{2\log 2 (1+z) a_k} \geq \beta K a_{k}} \frac{\log 2}{K} +  \frac{\beta^2 K a_k}{2} \nonumber \\
&\leq (1+z)\frac{1}{K}\sum_{k=1}^K f(\beta/(1+z) \sqrt{a_k}K) \nonumber \\
&\leq (1+z)\frac{1}{K}\sum_{k=1}^K \max_{s\in [(k-1)/K,k/K]}f(\beta\sqrt{a(s)}), \label{eq:riemann_sum}
\end{align}
where \eqref{eq:riemann_sum} follows from monotonicity of the function $f$.
By the Markov inequality, \eqref{eq:good parameters}, the second term in \eqref{eq:Z^G upper bound} satisfies the following
\begin{align}
\limsup_{N\rightarrow\infty}\frac{1}{N}\log\PR{\sum_{\abs{u}=N} e^{\beta X_u}\Ind_{A_u} \geq e^{-\delta N}e^{F_\beta(1-c)N}}
\leq -C +c+\delta < 0. \label{eq:negative exponent}
\end{align}
where $C$, $c$ and $\delta$ are chosen as in the first paragraph of the proof. Combining \eqref{eq:negative exponent} and Proposition~\ref{prop:FE comparison}, we conclude that
\begin{align*}
\limsup_{N\rightarrow\infty} \PR{\sum_{\abs{u}=N} e^{\beta X_u}\Ind_{A_u} \geq e^{-\delta N}e^{F_\beta(1-c)N}} = 0,
\end{align*}
and the proof is completed.
\end{proof}

\section{Hardness in the supercritical regime} \label{sec:hardness}

Assume that $A$ is non-concave and $\beta>\beta_G$. The goal of this section is to prove Theorem~\ref{main:hardness}. Before we dive in the section, we introduce a few definitions. The first is a chain of subtrees defined as follows.
\begin{definition}\label{def:chain of subtrees}
For $v\in\T$, let $\mathcal{C}_v$ be a chain of subtrees containing $v$ and all its ancestors defined by
\begin{align*}
\mathcal{C}_v = \bigcup_{k=0}^{\floor{N\abs{v}/K}} \T_{\floor{N(k+1)/K}-\floor{Nk/K}}^{v[\floor{Nk/K}]}.
\end{align*} 
\end{definition}

\begin{remark}
See Figure~\ref{fig:chain of trees} for an illustration of Definition~\ref{def:chain of subtrees}. Also, note that in particular, $v\in \mathcal C_v$ for every $v\in \T_N$.
\end{remark}

\begin{figure}[ht]
\centering
\begin{tikzpicture}[scale=1.2]

\draw [gray!80, fill=gray!30] (0,0) -- (-0.75,-0.5) -- (0.75,-0.5) -- cycle;
\draw [gray!80, fill=gray!30] (-0.4,-0.5) -- (-0.4-0.75,-1) -- (-0.4+0.75,-1) -- cycle;
\draw [gray!80, fill=gray!30] (-0.2,-1) -- (-0.2-0.75,-1.5) -- (-0.2+0.75,-1.5) -- cycle;

\draw [gray!80, fill=gray!30] (2,-2.75) -- (2-0.75,-3.25) -- (2+0.75,-3.25) -- cycle;

\draw (0,0) -- (-6,-4) -- (6,-4) -- cycle;

\draw [red, thick, rounded corners=5pt] (0,0) -- (-0.1,-0.2) -- (0.2,-0.3) -- (-0.4,-0.5);
\draw [red, thick, rounded corners=5pt] (-0.4,-0.5) -- (-0.6,-0.7) -- (-0.1,-0.9) -- (-0.2,-1);
\draw [red, thick, rounded corners=5pt] (-0.2,-1) -- (-0.1,-1.2) --  (0.2,-1.3) -- (-0.1,-1.5);
\draw [red, thick, rounded corners=5pt] (-0.1,-1.5) -- (-0.2,-1.8) -- (0.2,-2) -- (1.3,-2.3) -- (2,-2.75) ;
\draw [red, thick, rounded corners=5pt] (2,-2.75) -- (2.22,-3.05) -- (2.2,-3.15);

\fill[red] (0,0) circle (0.04) node[above, text=black] {$\varnothing$}; 
\fill[red] (-0.4,-0.5) circle (0.04);
\fill[red] (-0.2,-1) circle (0.04);
\fill[red] (2,-2.75) circle (0.04);
\fill (2.2,-3.15) circle (0.04) node[right] {$v$}; 
\end{tikzpicture}
\caption{A schematic illustration of the set $\mathcal{C}_v$ appearing in Definition~\ref{def:chain of subtrees}. The largest isosceles triangle represents the binary tree $\T_N$. The chain of isosceles triangles colored in gray represent the set $\mathcal{C}_v$, where the $k$-th subtree has depth $\floor{N(k+1)/K}-\floor{Nk/K}$. The red dots represent $v[\floor{Nk/K}]$, the ancestors of $v$ at depth $\floor{Nk/K}$.}
\label{fig:chain of trees}
\end{figure}
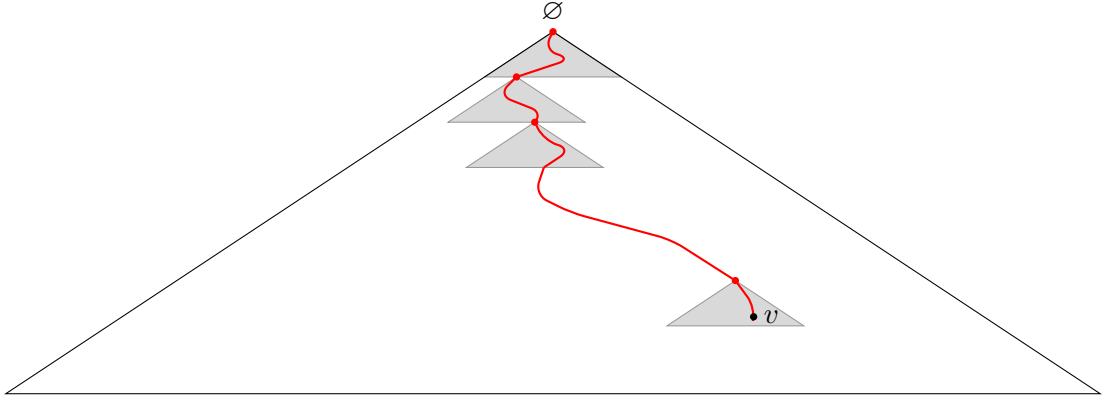

Next, we introduce the following stopping time.

\begin{definition}\label{def:tau'}
Let $(v(n))_{n\in\N}$ be an algorithm. Define the stopping time $\tau'$ as the first time $n$ when the algorithm finds a vertex in $\mathcal{C}_{v(n)}$ with a $(z,K,\mathbf{X})$-steep ancestor, given by:
\begin{align*}
\tau' = \inf\left\{n\in\N : \text{$\exists w\in \mathcal{C}_{v(n)}$ such that $w$ has a $(z,K,\mathbf{X})$-steep ancestor}\right\}.
\end{align*}
\end{definition}

Now, we come back to the proof of Theorem~\ref{main:hardness}. The proof is based on the following two propositions. The first proposition asserts that the running time of an algorithm that approximates the Gibbs measure dominates the stopping time $\tau'$ with probability approaching $1$.

\begin{proposition}
\label{prop:tau dominates tau'}
Suppose $A$ to be non-concave. Let $\beta>\beta_G$. If $\tau$ is the running time of an algorithm that approximates the Gibbs measure, then 
\begin{align*}
\lim_{N\rightarrow\infty} \PR{\tau\geq \tau'} = 1.
\end{align*}
\end{proposition}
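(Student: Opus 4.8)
The plan is to argue by contraposition via the output law: if the algorithm stops (at time $\tau$) before ever discovering a vertex with a $(z,K,\mathbf{X})$-steep ancestor in one of the chains $\mathcal{C}_{v(n)}$, then the output $v(\tau)$ cannot have a steep ancestor either, because $v(\tau)\in\mathcal{C}_{v(\tau)}$ by the remark following Definition \ref{def:chain of subtrees}. Consequently, on the event $\{\tau<\tau'\}$ the output is always a leaf with \emph{no} $(z,K,\mathbf{X})$-steep ancestor, i.e.\ $v(\tau)\in A_{v(\tau)}$ in the notation \eqref{eq:def of A_u}. The key point is then that, conditionally on $\mathbf{X}$, the mass that the algorithm's output law $\tilde{\mu}_{\beta,N}$ assigns to the set $\{u : u\in A_u\}$ equals $\PR{\tau<\tau' \mid \mathbf{X}}$ (up to the contribution of the complementary event, which only helps). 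If this probability stays bounded away from $0$ along a subsequence with positive probability, I want to deduce that $\frac1N\kld{\tilde{\mu}_{\beta,N}}{\mu_{\beta,N}}$ does not go to $0$ in probability, contradicting the hypothesis that the algorithm approximates the Gibbs measure.

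Concretely, the steps are: (1) Record the deterministic inclusion $\{\tau<\tau'\}\subseteq\{v(\tau)\in A_{v(\tau)}\}$, hence $\tilde{\mu}_{\beta,N}(\{u:u\in A_u\})\geq \PR{\tau<\tau'\mid\mathbf{X}}$ where the left side is the (random, $\mathbf{X}$-conditional) output law evaluated on the bad set. (2) Invoke Proposition \ref{prop:Gibbs sampling}: there are $z>0$, $K\in\N$ such that for all small $\delta>0$, with probability tending to $1$ the Gibbs measure satisfies $\mu_{\beta,N}(\{u:u\in A_u\})<e^{-\delta N}$, i.e.\ the Gibbs measure puts exponentially small mass on the set of leaves with no steep ancestor. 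Fix such $z,K$ (the \emph{same} ones used in Definition \ref{def:bad ancestor} and Definition \ref{def:tau'}). (3) Use the standard data-processing / Pinsker-type lower bound for KL divergence restricted to a set: for any event $B\subseteq\partial\T_N$,
\begin{align*}
\kld{\tilde{\mu}_{\beta,N}}{\mu_{\beta,N}} \;\geq\; \tilde{\mu}_{\beta,N}(B)\log\frac{\tilde{\mu}_{\beta,N}(B)}{\mu_{\beta,N}(B)} \;-\; \frac1e,
\end{align*}
obtained by comparing the two-point distributions $(\tilde{\mu}_{\beta,N}(B),1-\tilde{\mu}_{\beta,N}(B))$ and $(\mu_{\beta,N}(B),1-\mu_{\beta,N}(B))$ and bounding the elementary $x\log x$ term. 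Take $B=\{u:u\in A_u\}$. On the event from step (2), $\mu_{\beta,N}(B)<e^{-\delta N}$, so if moreover $\tilde{\mu}_{\beta,N}(B)\geq \eta>0$ then the right-hand side is at least $\eta\log(\eta e^{\delta N})-\tfrac1e \geq \tfrac{\eta\delta}{2}N$ for $N$ large, giving $\frac1N\kld{\tilde{\mu}_{\beta,N}}{\mu_{\beta,N}}\geq \tfrac{\eta\delta}{2}$. (4) Combine: suppose for contradiction $\PR{\tau\geq\tau'}\not\to 1$, so there is $\eta>0$ and a subsequence along which $\PR{\tau<\tau'}\geq 2\eta$. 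Since $\tilde{\mu}_{\beta,N}(B)\geq\PR{\tau<\tau'\mid\mathbf{X}}$, a Markov/averaging argument gives $\PR{\tilde{\mu}_{\beta,N}(B)\geq\eta}\geq\eta$ along that subsequence. Intersecting with the high-probability event of step (2), we get $\PR{\frac1N\kld{\tilde{\mu}_{\beta,N}}{\mu_{\beta,N}}\geq\tfrac{\eta\delta}{2}}\geq \eta/2$ for large $N$, which contradicts $\frac1N\kld{\tilde{\mu}_{\beta,N}}{\mu_{\beta,N}}\xrightarrow{\PP}0$ (Definition \ref{def:approximation} and the remark after it). Hence $\PR{\tau\geq\tau'}\to1$.

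The main obstacle I anticipate is step (1)/(3): making precise the relationship between the $\mathbf{X}$-conditional output law and the event $\{\tau<\tau'\}$, and then feeding a pointwise (in $\mathbf{X}$) lower bound on the restricted KL divergence through to a statement in probability. One has to be careful that $\tilde{\mu}_{\beta,N}$ is itself random (it depends on $\mathbf{X}$ and on the internal randomness $(U_k)$), that $\tau,\tau'$ are defined on the enlarged probability space of Definition \ref{def:algor}, and that the "restricted KL lower bound" of step (3) is applied for a fixed realization of $\mathbf{X}$ (and of the algorithm's randomness) — the bound $\kld{P}{Q}\geq P(B)\log(P(B)/Q(B)) - 1/e$ is purely deterministic once $P,Q$ are fixed probability measures, so the subtlety is purely bookkeeping of which quantities are conditioned on what. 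A secondary, minor point is checking that the constant $-1/e$ (or whatever clean constant one gets from bounding $h(x)=x\log x+(1-x)\log(1-x)+x\log\frac1{1-x}$ type terms on $[0,1]$) is harmless after dividing by $N$; any $O(1)$ error is absorbed since the main term is linear in $N$.
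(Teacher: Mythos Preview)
Your proof is correct and follows essentially the same route as the paper: both use the deterministic inclusion $\{\tau<\tau'\}\subseteq\{v(\tau)\text{ has no steep ancestor}\}$, Proposition~\ref{prop:Gibbs sampling}, and a Birg\'e-type data-processing lower bound on the KL divergence restricted to the steep/non-steep dichotomy. The only difference is organizational: the paper argues directly---packaging your step~(3) as a standalone Lemma~\ref{lem:approx} via Fact~\ref{Birge}, deducing Lemma~\ref{lem:approx Gibbs sampling}, and then concluding $\PR{\tau\geq\tau'}\geq\EX{\tilde\mu_N(\text{steep})}\to1$---whereas you run the same computation by contradiction.
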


The proof of Proposition~\ref{prop:tau dominates tau'} is provided in Section~\ref{sec:prop:tau dominates tau'}. Note that Addario-Berry and Maillard proved in \cite{AB&M20} a hardness result of finding a vertex $v\in\partial\T_N$ such that $X_v$ lies in a level set above a critical level, denoted by $x_*N$. In their case, $\tau\geq \tau'$ holds deterministically because they showed in Lemma 3.1 in their paper that any for any vertex $v\in\partial\T_N$ such that $X_v$ lies in a level set above $xN$, where $x>x_*$, $v$ must have a $(z,K,\mathbf{X})$-ancestor. Nevertheless, Proposition~\ref{prop:tau dominates tau'} is sufficient for our purpose.

The second proposition to prove Theorem~\ref{main:hardness} is the following. The proposition asserts that the $\tau'$ is exponentially large with probability approaching $1$.
\begin{proposition} \label{prop:sub problem is hard}
There exists $\gamma>0$ such that
\begin{align*}
\lim_{N\rightarrow\infty}\PR{\tau' > e^{\gamma N}} = 1.
\end{align*}
\end{proposition}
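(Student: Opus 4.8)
The plan is to bound, uniformly in the algorithm's strategy, the probability that the algorithm locates any steep vertex in only subexponentially many queries, and then to sum over those queries. The key observation is that the event ``$v$ has a $(z,K,\mathbf X)$-steep ancestor'' for a \emph{single} vertex $v$ with $|v|=n$ requires that some one of the at most $K$ Gaussian increments $X_{v[\floor{Nk/K}]}-X_{v[\floor{N(k-1)/K}]}$ exceed $N\sqrt{2\log 2\,(1+z)\,a_k}$. Each such increment is centered Gaussian with variance of order $N$, so the Gaussian tail gives
\begin{align*}
\PR{X_{v[\floor{Nk/K}]}-X_{v[\floor{N(k-1)/K}]} > N\sqrt{2\log 2(1+z)a_k}} \leq \exp\!\left(-(1+z)(\log 2)\tfrac{N}{K}\cdot\tfrac{a_k}{a_{k,N}}\right),
\end{align*}
which for $N$ large is at most $\exp(-(1+z/2)(\log 2)N/K)=2^{-(1+z/2)N/K}$ up to the Riemann-sum correction $a_k/a_{k,N}\to 1$. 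Taking a union bound over the $k\le K$ scales, a fixed vertex $v$ has a steep ancestor with probability at most $K\cdot 2^{-(1+z/3)N/K}$ once $N$ is large.

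Next I would convert this into a statement about the chain $\mathcal C_{v(n)}$ and the stopping time $\tau'$. At each step the algorithm, being $\tilde\F$-adapted, reveals at most one new vertex $v(n)$; the chain $\mathcal C_{v(n)}$ adds at most $K\cdot 2^{M^*}$ vertices where $M^*=\max_k(\floor{N(k+1)/K}-\floor{Nk/K})\le N/K+1$ is the depth of the deepest renormalized block, but crucially, whether a \emph{newly queried} block contains a steep ancestor depends only on increments that are independent of $\tilde\F_{n-1}$ (here I would invoke the branching property and the fact that a steep ancestor is witnessed by increments of the CREM along edges not yet explored). Actually, the cleaner route is a straight first-moment bound: on the event $\{\tau'\le T\}$, the set $\bigcup_{n\le T}\mathcal C_{v(n)}$ of explored-or-adjacent vertices has size at most $T\cdot K\cdot 2^{M^*}$, and it contains some vertex with a steep ancestor. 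Hence
\begin{align*}
\PR{\tau'\le T} \leq \sum_{\text{relevant } v}\PR{\text{$v$ has a steep ancestor}} \leq T\cdot K\cdot 2^{M^*}\cdot K\cdot 2^{-(1+z/3)N/K}.
\end{align*}
Choosing $K$ large enough that $M^*/N<1/K + o(1)$ is dominated by $(1+z/3)/K$ — more precisely so that $2^{M^*-(1+z/3)N/K}\le 2^{-zN/(4K)}$ for large $N$ — this bound is at most $T\cdot K^2\cdot 2^{-zN/(4K)}$. Taking $T=e^{\gamma N}$ with $\gamma<\tfrac{z}{4K}\log 2$ makes the right-hand side tend to $0$, which is exactly the claim.

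The main obstacle is making the union bound legitimate despite the adaptivity of the algorithm: the set of vertices over which one sums is itself random and depends on the disorder. I would handle this by noting that the bound ``size of explored region $\le T K 2^{M^*}$'' is deterministic given $\tau'\le T$, so one may first union-bound over \emph{all} $v\in\T_N$ whose chain $\mathcal C_v$ could be reached within $T$ steps — but that reintroduces a factor $|\T_N|\le 2^{N+1}$ which would swamp the gain. The correct fix, and the delicate point, is to exploit that a steep ancestor is detected only when the algorithm actually queries the relevant block, so one should sum instead over the (at most $T$) blocks the algorithm chooses to open, using that conditionally on the past the fresh increments in a newly opened block are independent Gaussians; a stopping-time/optional-stopping argument or a union bound over the $T$ steps with the conditional tail estimate above then yields $\PR{\tau'\le T}\le T\cdot K\cdot 2^{M^*}\cdot K\cdot 2^{-(1+z/3)N/K}$ without the spurious $2^N$. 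I expect this careful accounting — identifying exactly which Gaussian data is revealed at each step and checking independence from $\tilde\F$ — to be where the real work lies; the choice of $K$ and $\gamma$ afterwards is bookkeeping, and it is here that the non-concavity and $\beta>\beta_G$ enter only through the earlier Proposition~\ref{prop:Gibbs sampling} (which is used in Proposition~\ref{prop:tau dominates tau'}, not here), so in fact Proposition~\ref{prop:sub problem is hard} is a purely geometric/Gaussian statement about the tree.
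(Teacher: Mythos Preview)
Your approach is the one the paper takes. The paper's Lemma~\ref{lem:subproblem} gives the chain bound $\PR{\exists v\in\mathcal C_w\text{ steep}}\le e^{-\gamma N}$ for any $\gamma<(z\log 2)/K$, which matches your union-bound estimate $K^2\cdot 2^{-(z/3)N/K}$; and the adaptivity issue you correctly flag is resolved exactly along the lines you suggest, via a step-by-step conditional argument rather than a union bound over the random explored set. The one piece you leave open as ``where the real work lies'' is executed in the paper by passing from $\tilde\F_k$ to the \emph{enlarged} filtration $\G_k\supset\tilde\F_k$ that reveals the entire chain $(X_w)_{w\in\mathcal C_{v(i)}}$ for each $i\le k$, not just the queried values $X_{v(i)}$. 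With this enlargement, the branching property (Lemma~\ref{lem:independence}) says that the increments in $\mathcal R_n\setminus\mathcal R_{n-1}$ are, conditionally on $\G_{n-1}$, distributed as in an independent copy $\mathbf X'$; hence $\cPR{\tau'=n}{\G_{n-1}}\le \sup_{w\in\T_N}\PR{\exists v\in\mathcal C_w:\text{$v$ is $(z,K,\mathbf X')$-steep}}\le e^{-\gamma N}$, which gives directly that $\tau'$ stochastically dominates a Geometric$(e^{-\gamma N})$ random variable --- slightly cleaner than summing over $T$ steps but equivalent in substance. Your closing observation, that the proposition is a purely Gaussian/combinatorial statement about the tree and does not use $\beta>\beta_G$ or the non-concavity of $A$, is correct.
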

Proposition~\ref{prop:sub problem is hard} is proven following the same argument as in  \cite{AB&M20}, and the proof is included in Section~\ref{sec:sub problem is hard} for completeness.

Conditioned on Proposition~\ref{prop:tau dominates tau'} and Proposition~\ref{prop:sub problem is hard}, the proof of Theorem~\ref{main:hardness} is fairly short, so we provide it here.

\begin{proof}[Proof of Theorem~\ref{main:hardness}]
Fix $\beta>\beta_G$. Let $(v(n))_{n\in\N}$ be an algorithm that approximates the Gibbs measure with probability approaching $1$. Suppose that $\tau$ is its running time and $\tilde{\mu}_{N}$ is its output law, which is the law of $v(\tau)$ conditioned on the CREM.

Now, combining Proposition~\ref{prop:tau dominates tau'} with Proposition~\ref{prop:sub problem is hard}, we conclude that there exists $\gamma>0$ such that
\begin{align*}
\lim_{N\rightarrow\infty}\PR{\tau\geq e^{\gamma N}} \geq \lim_{N\rightarrow\infty} \PR{\{\tau\geq \tau'\}\cap \{\tau'\geq e^{\gamma N}\}} = 1,
\end{align*}
and the proof is completed.
\end{proof}

%
%

\subsection{Proof of Proposition~\ref{prop:tau dominates tau'}} \label{sec:prop:tau dominates tau'}

The proof of Proposition~\ref{prop:tau dominates tau'} follows from the following lemma which states if an algorithm approximates the Gibbs measure, with probability approaching $1$, its output law also tends to sample a vertex with a $(z,K,\mathbf{X})$-steep ancestor. 

\begin{lemma}
\label{lem:approx Gibbs sampling}
Let $\beta>\beta_G$. Suppose that $\tilde{\mu}_N$ is the output law of an algorithm that approximates the Gibbs measure. Then, there exist $z>0$, $K\in\N$ such that there exists $\varepsilon_N\rightarrow 0$ such that, with probability approaching $1$,
\begin{align*}
\lim_{N\rightarrow\infty} \PR{\sum_{\abs{u}=N} \tilde{\mu}_{N}(u)\Ind\{\text{$u$ has a $(z,K,\mathbf{X})$-steep ancestor\}} > 1-\varepsilon_N} = 1,
\end{align*}
\end{lemma}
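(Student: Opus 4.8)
The plan is to transfer the conclusion of Proposition~\ref{prop:Gibbs sampling} from the true Gibbs measure $\mu_{\beta,N}$ to the output law $\tilde\mu_N$ using the smallness of the KL divergence. First I would fix the pair $(z,K)$ (and the associated $\delta>0$) furnished by Proposition~\ref{prop:Gibbs sampling}, so that, writing $S_N := \sum_{\abs u = N}\mu_{\beta,N}(u)\Ind\{u\text{ has a }(z,K,\mathbf X)\text{-steep ancestor}\}$, we have $\PR{S_N > 1 - e^{-\delta N}} \to 1$. Equivalently, if $B := \{u\in\partial\T_N : u\text{ has no }(z,K,\mathbf X)\text{-steep ancestor}\}$, then $\mu_{\beta,N}(B) \le e^{-\delta N}$ with probability approaching $1$. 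The goal is to show $\tilde\mu_N(B) \le \varepsilon_N$ for some deterministic $\varepsilon_N\to 0$, again with probability approaching $1$.

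The key tool is a change-of-measure / Pinsker-type inequality controlling $\tilde\mu_N(B)$ in terms of $\mu_{\beta,N}(B)$ and $\kld{\tilde\mu_N}{\mu_{\beta,N}}$. The cleanest route is the ``high-probability'' bound: for any event $B$ and any probability measures $P,Q$ on a discrete space,
\begin{align*}
P(B)\log\frac{P(B)}{Q(B)} \le P(B)\log\frac{P(B)}{Q(B)} + (1-P(B))\log\frac{1-P(B)}{1-Q(B)} \le \kld{P}{Q},
\end{align*}
where the first inequality is the data-processing/convexity inequality (the two-point KL divergence between $(P(B),1-P(B))$ and $(Q(B),1-Q(B))$ is at most $\kld{P}{Q}$), using that $(1-P(B))\log\frac{1-P(B)}{1-Q(B)}\ge -\log 2 \cdot \text{(something bounded)}$ — more precisely, the two-point divergence is nonnegative, hence $P(B)\log\frac{P(B)}{Q(B)} \le \kld{P}{Q} + P(B)\log\frac{1}{1-P(B)}\cdot$... . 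I would instead use the sharper and simpler consequence: since the two-point divergence is nonnegative,
\begin{align*}
\log\frac{1}{Q(B)} \le \frac{1}{P(B)}\Big(\kld{P}{Q} + \log 2\Big),
\end{align*}
which rearranges the inequality $P(B)\log\frac{P(B)}{Q(B)} + (1-P(B))\log\frac{1-P(B)}{1-Q(B)}\le \kld{P}{Q}$ together with $P(B)\log P(B) + (1-P(B))\log(1-P(B)) \ge -\log 2$. Applying this with $P = \mu_{\beta,N}$, $Q = \tilde\mu_N$, $B$ as above: on the event $\{\mu_{\beta,N}(B)\le e^{-\delta N}\}$ we get $\log\frac{1}{\tilde\mu_N(B)}$... — wait, this bounds $\tilde\mu_N(B)$ from \emph{below}, which is the wrong direction. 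So instead I would apply it with $P = \tilde\mu_N$, $Q = \mu_{\beta,N}$, giving
\begin{align*}
\tilde\mu_N(B)\log\frac{\tilde\mu_N(B)}{\mu_{\beta,N}(B)} \le \kld{\tilde\mu_N}{\mu_{\beta,N}} + \log 2,
\end{align*}
hence $\tilde\mu_N(B)\,\big(\log\tilde\mu_N(B) + \delta N\big) \le \kld{\tilde\mu_N}{\mu_{\beta,N}} + \log 2$ on the event that $\mu_{\beta,N}(B) \le e^{-\delta N}$. Since $x\log x \ge -e^{-1}$, this yields $\delta N\,\tilde\mu_N(B) \le \kld{\tilde\mu_N}{\mu_{\beta,N}} + \log 2 + e^{-1}$, i.e. $\tilde\mu_N(B) \le \frac{\kld{\tilde\mu_N}{\mu_{\beta,N}} + \log 2 + e^{-1}}{\delta N}$.

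Finally I would combine this with the approximation hypothesis: since $\tilde\mu_N$ approximates the Gibbs measure, $\frac1N\kld{\tilde\mu_N}{\mu_{\beta,N}} \stackrel{\PP}{\to} 0$, so there is a deterministic sequence $\eta_N\to 0$ with $\PR{\frac1N\kld{\tilde\mu_N}{\mu_{\beta,N}} \le \eta_N}\to 1$. On the intersection of this event with $\{\mu_{\beta,N}(B)\le e^{-\delta N}\}$ — which has probability approaching $1$ by Proposition~\ref{prop:Gibbs sampling} and a union bound — we obtain $\tilde\mu_N(B) \le \frac{\eta_N}{\delta} + \frac{\log 2 + e^{-1}}{\delta N} =: \varepsilon_N \to 0$. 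This proves $\PR{\sum_{\abs u = N}\tilde\mu_N(u)\Ind\{u\text{ has a }(z,K,\mathbf X)\text{-steep ancestor}\} > 1 - \varepsilon_N}\to 1$, as desired. The main obstacle is getting the direction and the constants right in the transfer inequality; everything else is a routine combination of Proposition~\ref{prop:Gibbs sampling} and the definition of approximation, and the ``$\varepsilon_N$'' produced is automatically deterministic because it is built only from $\eta_N$, $\delta$, and $N$.
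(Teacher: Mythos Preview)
Your proof is correct and follows essentially the same route as the paper: the paper isolates the transfer step as a separate Lemma~\ref{lem:approx}, proved via Birg\'{e}'s inequality (which is exactly the data-processing/two-point KL bound you invoke), and then applies it with $P_N=\tilde\mu_N$, $Q_N=\mu_{\beta,N}$ and $A_N=\{u:\text{$u$ has no steep ancestor}\}$, combining Proposition~\ref{prop:Gibbs sampling} with the approximation hypothesis just as you do. Your final bound $\tilde\mu_N(B)\le \frac{1}{\delta N}\kld{\tilde\mu_N}{\mu_{\beta,N}}+\frac{\log 2+e^{-1}}{\delta N}$ matches the paper's $P_N(A_N)\le \frac{1}{cN}\kld{P_N}{Q_N}+\frac{2e^{-1}}{cN}$ up to an irrelevant constant.
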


We now proceed to the proof of Proposition~\ref{prop:tau dominates tau'}.

\begin{proof}[Proof of Proposition~\ref{prop:tau dominates tau'}]
Fix $\beta>\beta_G$. Let $(v(n))_{n\in\N}$ be an algorithm that approximates the Gibbs measure with probability approaching $1$. Suppose that $\tau$ is its running time and $\tilde{\mu}_{N}$ is its output law, which is the law of $v(\tau)$ conditioned on the CREM.
Recall that $\tau'$ defined in Definition~\ref{def:tau'} is the first time where the algorithm finds a vertex with a $(z,K,\mathbf{X})$-steep ancestor. Therefore, the output $v(\tau)$ has a $(z,K,\mathbf{X})$-steep ancestor implies that $\tau\geq \tau'$. 
Defining $\mathcal{G}_N$ the event 
\begin{align*}
\mathcal{G}_N \coloneqq \left\{\sum_{\abs{u}=N}\tilde{\mu}_N(u) \Ind\{\text{$u$ has a $(z,K,\mathbf{X})$-steep ancestor}\} > 1-\varepsilon_N\right\},
\end{align*}
we have
\begin{align*}
\PR{\tau\geq \tau'}
&\geq \PR{\text{$v(\tau)$ has a $(z,K,\mathbf{X})$-steep ancestor}} \\
&= \EX{\sum_{\abs{u}=N} \tilde{\mu}_N(u)\Ind\{\text{$u$ has a $(z,K,\mathbf{X})$-steep ancestor}\}} && \text{(Definition of $\tilde{\mu}_N$)} \\
&\geq \EX{\Ind_{\mathcal{G}_N}\sum_{\abs{u}=N} \tilde{\mu}_N(u)\Ind\{\text{$u$ has a $(z,K,\mathbf{X})$-steep ancestor}\}} \\
&\geq \PR{\mathcal{G}_N} (1-\varepsilon_N) \rightarrow 1, \quad N\rightarrow\infty, && \text{(By Lemma~\ref{lem:approx Gibbs sampling})}
\end{align*}
and the proof is completed.
\end{proof}


\subsection{Proof of Lemma~\ref{lem:approx Gibbs sampling}} \label{sec:proof of approx Gibbs sampling}
This section is devoted to the proof of Proposition~\ref{lem:approx Gibbs sampling}. The proof relies on Proposition~\ref{prop:Gibbs sampling} and the following lemma which states that if the KL divergence between two sequences of random probability measures are close to each other with probability approaching $1$, and if the measures of certain events in the second sequence decay exponentially to $0$ with probability approaching $1$, then the measures of the corresponding events in the first sequence also converge to $0$ with probability approaching $1$.

\begin{lemma}
\label{lem:approx}
Suppose $(P_N)_{N\in\N}$ and $(Q_N)_{N\in\N}$ be two sequences of \emph{random} probability measures defined on a discrete space $S$ such that the sequence $(P_N)_{N\in\N}$ approximates the sequence $(Q_N)_{N\in\N}$ with probability approaching $1$. If $(A_N)_{N\in\N}$ is a sequence of events on $S$ such that with probability approaching $1$, $Q_N(A_N)$ converges to $0$ exponentially fast as $N\rightarrow\infty$, i.e., there exists $c>0$ such that 
\begin{align*}
\lim_{N\rightarrow\infty}\PR{Q_N(A_N) \leq e^{-cN}}=1,
\end{align*}
then $P_N(A_N)$ converges to $0$ with probability approaching $1$ as $N\rightarrow\infty$, i.e., there exists $\varepsilon_N\rightarrow 0$ such that
\begin{align*}
\lim_{N\rightarrow\infty} \PR{P_N(A_N) \leq \varepsilon_N} = 1.
\end{align*}
\end{lemma}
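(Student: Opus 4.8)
The plan is to extract a deterministic inequality relating $P_N(A_N)$ to $Q_N(A_N)$ and $\kld{P_N}{Q_N}$, and then feed in the two probabilistic hypotheses. The key tool is the standard data-processing / compression inequality for KL divergence: for any event $A$ with $P(A)>0$,
\begin{align*}
\kld{P}{Q} \geq P(A)\log\frac{P(A)}{Q(A)} + (1-P(A))\log\frac{1-P(A)}{1-Q(A)} \geq P(A)\log\frac{1}{Q(A)} - \log 2,
\end{align*}
where the last step uses $\log\frac{1-P(A)}{1-Q(A)} \geq \log(1-P(A)) \geq -\log 2$ when $P(A)\leq 1/2$ (and the claim is trivial when $P(A)>1/2$ since we will be proving $P(A)$ is small) together with $P(A)\log P(A)\geq -1/e \geq -\log 2$. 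Rearranging gives the pointwise bound
\begin{align*}
P_N(A_N) \leq \frac{\kld{P_N}{Q_N} + \log 2}{\log(1/Q_N(A_N))},
\end{align*}
valid on the event $\{Q_N(A_N) < 1\}$.

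First I would fix the constant $c>0$ from the hypothesis and define the good event $\mathcal E_N = \{Q_N(A_N)\leq e^{-cN}\}\cap\{\tfrac1N\kld{P_N}{Q_N}\leq \eta_N\}$, where $\eta_N\to 0$ is chosen so that $\tfrac1N\kld{P_N}{Q_N}\to 0$ in probability implies $\PR{\tfrac1N\kld{P_N}{Q_N}\leq\eta_N}\to 1$; such a sequence exists by a standard diagonal argument (e.g.\ take $\eta_N$ decaying slowly enough). By the approximation hypothesis and the exponential-decay hypothesis together with a union bound, $\PR{\mathcal E_N}\to 1$. On $\mathcal E_N$, the displayed pointwise bound yields
\begin{align*}
P_N(A_N) \leq \frac{N\eta_N + \log 2}{cN} = \frac{\eta_N}{c} + \frac{\log 2}{cN} =: \varepsilon_N \to 0.
\end{align*}
Hence $\PR{P_N(A_N)\leq\varepsilon_N}\geq \PR{\mathcal E_N}\to 1$, which is exactly the conclusion.

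The only real subtlety is bookkeeping: making sure the chosen $\varepsilon_N$ is genuinely deterministic and tends to $0$ regardless of how slowly the two hypotheses kick in, which is handled by the routine fact that convergence in probability lets one pick a deterministic envelope sequence $\eta_N\to 0$ with $\PR{\cdot\leq\eta_N}\to 1$. The entropy inequality itself is classical and requires only elementary manipulation of the nonnegativity of $\kld{\mathrm{Ber}(P(A))}{\mathrm{Ber}(Q(A))}$; I do not expect any obstacle there. No step in this argument uses the tree structure, the Gaussian field, or $\beta>\beta_G$ — it is a purely measure-theoretic lemma, which is why it is stated in this generality.
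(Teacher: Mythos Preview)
Your proposal is correct and is essentially the same argument as the paper's. The paper names your ``data-processing/compression inequality'' \emph{Birg\'e's inequality} (it cites Theorem~4.20 in \cite{ConIneq13}) and, using the bound $x\log x\ge -e^{-1}$ on $[0,1]$ for both entropy terms, obtains the constant $2e^{-1}$ rather than your $\log 2$; it then arrives at the identical pointwise estimate $P_N(A_N)\le \frac{1}{cN}\kld{P_N}{Q_N}+\frac{2e^{-1}}{cN}$ and finishes exactly as you do by intersecting with the event $\{\tfrac1N\kld{P_N}{Q_N}\le\eta_N\}$.
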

Lemma~\ref{lem:approx} follows from the so-called Birg\'{e}'s inequality which, roughly speaking, says that if for two probability measures $P$ and $Q$ defined on the same probability space such that $P$ is dominated by $Q$, for any event $A$, the difference of between $P(A)$ and $Q(A)$ is gauged by the KL divergence from $P$ to $Q$.

\begin{fact}[Birg\'{e}'s inequality, Theorem~4.20 in \cite{ConIneq13}]\label{Birge}
Let $P$ and $Q$ be two probability measures defined on probability space $(S,\mathcal{S})$ such that $P$ is dominated by $Q$, i.e., for all event $A\in\mathcal{S}$, $Q(A)=0$ implies $P(A)=0$. Then,
\begin{align*}
\sup_{A\in\mathcal{A}} h(P(A),Q(A)) \leq \kld{P}{Q},
\end{align*}
where $h(p,q)=p\log(p/q)+(1-p)\log((1-p)/(1-q))$ is the relative entropy between two Bernoulli distribution with parameters $p$ and $q$, respectively.
\end{fact}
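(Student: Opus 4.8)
The plan is to reduce the claim, for each fixed event $A\in\mathcal S$, to a two-point instance of the data-processing inequality for relative entropy, and to prove that instance by hand from convexity. The conceptual point is that $h(P(A),Q(A))$ is precisely $\kld{P\circ\Ind_A^{-1}}{Q\circ\Ind_A^{-1}}$, the KL divergence between the laws of the indicator $\Ind_A$ under $P$ and under $Q$ (Bernoulli with parameters $P(A)$ and $Q(A)$); so the inequality to be shown says that coarse-graining by $\Ind_A$ cannot increase the KL divergence. I would establish this directly rather than invoke a general monotonicity theorem, and then take the supremum over $A$.

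First I would dispose of degenerate cases. If $\kld PQ=\infty$ there is nothing to prove, so assume it is finite. If $Q(A)\in\{0,1\}$, the domination hypothesis $P\ll Q$ forces $P(A)=Q(A)$ (from $Q(A)=0$ one gets $P(A)=0$; from $Q(A)=1$ one gets $Q(A^c)=0$, hence $P(A^c)=0$), and then $h(P(A),Q(A))=0$ under the usual conventions $0\log 0=0$ and $0\log(0/0)=0$; similarly any term with $P(A)=0$ or $P(A^c)=0$ is read as $0$. So assume $0<Q(A)<1$. Let $f=\dd{P}/\dd{Q}$ be the Radon--Nikodym derivative, which exists by domination, is nonnegative, and satisfies $\int_S f\,\dd{Q}=1$; finiteness of $\kld PQ$ guarantees $f\log f$ is $Q$-integrable, hence integrable on $A$ and on $A^c$. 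With $\phi(t)=t\log t$, convex and continuous on $[0,\infty)$, split
\[
\kld PQ=\int_S\phi(f)\,\dd{Q}=\int_A\phi(f)\,\dd{Q}+\int_{A^c}\phi(f)\,\dd{Q}.
\]
Applying Jensen's inequality to $\phi$ against the probability measure $Q(\,\cdot\cap A)/Q(A)$ gives $\frac{1}{Q(A)}\int_A\phi(f)\,\dd{Q}\ge \phi\bigl(\frac{1}{Q(A)}\int_A f\,\dd{Q}\bigr)=\phi\bigl(P(A)/Q(A)\bigr)$, that is,
\[
\int_A\phi(f)\,\dd{Q}\ \ge\ P(A)\log\frac{P(A)}{Q(A)},
\]
and symmetrically $\int_{A^c}\phi(f)\,\dd{Q}\ge P(A^c)\log\frac{P(A^c)}{Q(A^c)}=(1-P(A))\log\frac{1-P(A)}{1-Q(A)}$. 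Adding the two lower bounds gives $\kld PQ\ge h(P(A),Q(A))$, and since $A\in\mathcal S$ was arbitrary, taking the supremum over $A$ completes the proof.

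The argument has no serious obstacle; it is convexity plus a coarse-graining identity. The only places that require care are the bookkeeping of the boundary values $0$ and $1$ of $P(A)$ and $Q(A)$ with the attendant conventions for $0\log 0$, and the verification that the two Jensen steps are legitimate, i.e.\ that $f$ and $\phi(f)$ are $Q$-integrable on $A$ and on $A^c$ — which is immediate from $f\ge 0$, $\int_S f\,\dd{Q}=1$, and the assumed finiteness of $\kld PQ$ (the case $\kld PQ=\infty$ being trivial). An equivalent high-level phrasing is that one pushes $P$ and $Q$ forward under $\Ind_A$ and invokes monotonicity of KL divergence under measurable maps, but unwinding that monotonicity is exactly the Jensen computation above.
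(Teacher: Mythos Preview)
Your proof is correct. The paper does not actually supply its own proof of this fact: it states Birg\'e's inequality as a quoted result (Theorem~4.20 in \cite{ConIneq13}) and uses it as a black box, so there is no in-paper argument to compare against. Your route---pushing $P$ and $Q$ forward through $\Ind_A$ and bounding the resulting two-point KL divergence by the original one via Jensen applied to $\phi(t)=t\log t$ on the conditional measures $Q(\cdot\mid A)$ and $Q(\cdot\mid A^c)$---is the standard proof of the data-processing inequality specialised to a binary partition, and your handling of the degenerate cases $Q(A)\in\{0,1\}$ and $\kld PQ=\infty$ is appropriate.
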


\begin{remark}
Note that the positions of $P$ and $Q$ are swapped comparing with the statement of Theorem~4.20 in \cite{ConIneq13}.
\end{remark}

Next, we state a simple but handy fact of the function $x\mapsto x\log x$ where the proof is omitted.

\begin{fact}\label{fact:xlogx}
The range of the function $g$ defined by $g(0)=0$ and $g(x)=x\log x$ on $(0,1]$ equals $[-e^{-1},0]$. 
\end{fact}

We are now ready to prove Lemma~\ref{lem:approx}.

\begin{proof}[Proof of Lemma~\ref{lem:approx}]
Let $(P_N)_{N\in\N}$ and $(Q_N)_{N\in\N}$ be two sequences of \emph{random}  probability measures defined on a discrete space $S$. Suppose that the sequence $(P_N)_{N\in\N}$ approximates the sequence $(Q_N)_{N\in\N}$ with probability approaching $1$, and there exists a sequence of event $(A_N)_{N\in\N}$ such that there exists $c>0$ such that for all $N\in\N$,
\begin{align}
\PR{Q_N(A_N) \leq e^{-cN}}=1-o_N(1). \label{eq:expo decay}
\end{align}
For all $N\in\N$, on the event $\{Q_N(A_N) \leq e^{-cN}\}$, Fact~\ref{fact:xlogx} yields
\begin{align}
h(P_N(A_N),Q_N(A_N)) 
&= P_N(A_N)(\log P_N(A_N) - \log Q_N(A_N)) \nonumber \\
&+ (1-P_N(A_N))(\log (1-P_N(A_N)) - \underbrace{\log (1-Q_N(A_N))}_{\leq 0}) \nonumber \\
&\geq P_N(A_N)cN - 2e^{-1}. \label{eq:entropy lowerbound} 
\end{align}
Therefore, combining \eqref{eq:entropy lowerbound} and Birg\'{e}'s inequality, we have
\begin{align}
P_N(A_N) 
\leq \frac{1}{cN} h(P_N(A_N),Q_N(A_N)) + \frac{1}{cN}2e^{-1}
\leq \frac{1}{cN} \kld{P_N}{Q_N} + \frac{1}{cN}2e^{-1}.
\label{eq:P(A) upper bound}
\end{align}
On the other hand, since $P_N$ approximates $Q_N$ with probability approaching $1$, there exists $\varepsilon_N\rightarrow 0$ such that
\begin{align*}
\PR{\frac{1}{N}\kld{P_N}{Q_N} \leq \varepsilon_N} = 1 - o_N(1).
\end{align*}
Thus, by \eqref{eq:P(A) upper bound}, with probability approaching $1$, 
\begin{align*}
P_N(A_N) \leq \frac{\varepsilon_N}{c} + \frac{1}{cN} 2e^{-1} \rightarrow 0, \quad N\rightarrow\infty
\end{align*}
as desired.
\end{proof}

We now prove Proposition~\ref{lem:approx Gibbs sampling}.

\begin{proof}[Proof of Proposition~\ref{lem:approx Gibbs sampling}]
Fix $\beta > \beta_G$. Let $\tilde{\mu}_N$ be the output law of an algorithm that approximates the Gibbs measure. We apply Lemma~\ref{lem:approx} with $P_N\coloneqq \mu_{\beta,N}$, $Q_N\coloneqq \tilde{\mu}_N$ and $A_N$ defined in \eqref{eq:def of A_u} where its complement equals
\[A_N^c \coloneqq \left\{u\in\partial\T_N : \text{$u$ has a $(z,K,\mathbf{X})$-steep ancestor}\right\}.\] 
Since Proposition~\ref{prop:Gibbs sampling} implies that there exists $\delta>0$ such that \begin{align*}
\lim_{N\rightarrow\infty} \PR{P_N(A_N^c)\geq 1-e^{-\delta N}} = 1,
\end{align*}
we then conclude from Lemma~\ref{lem:approx} that there exists $\varepsilon_N\rightarrow 0$ such that
\begin{align*}
&\lim_{N\rightarrow\infty} \PR{\sum_{\abs{u}=N} \tilde{\mu}_{N}(u)\Ind\{\text{$u$ has a $(z,K,\mathbf{X})$-steep ancestor\}} > 1-\varepsilon_N} \\
&=\lim_{N\rightarrow\infty} \PR{Q_N(A_N^c)\geq 1-\varepsilon_N} = 1,
\end{align*}
and proof is completed.
\end{proof}

\subsection{Proof of Proposition~\ref{prop:sub problem is hard}} \label{sec:sub problem is hard}

This section is devoted to the proof of Proposition~\ref{prop:sub problem is hard}. As the proof is modified from the proof for the second part of Theorem 1.1 in \cite{AB&M20}, we start by recalling some relevant notation and lemmas from that article.

\paragraph{Notation.} 
For $v\in\T_N$, recall the definition of $\mathcal{C}_v$ in Definition~\ref{def:chain of subtrees}.
We then define the filtration
\[
\G_k = \sigma\left(v(1),\ldots,v(k);\,(X_{w})_{w\in\mathcal C_{v(1)}},\ldots,(X_{w})_{w\in\mathcal C_{v(k)}};\,U_1,\ldots,U_{k+1}\right).
\]
Note that $\tilde{\F}_k \subset \G_k$ for all $k\ge0$ --- heuristically, $\G_k$ adds to $\tilde{\F}_k$ the information about the values in the branching random walk of all vertices contained in $\mathcal C_{v(i)}$, $i=1,\ldots,k$. Note that trivially, the stochastic process $v(n)_{n\ge0}$ is still measurable with respect to this larger filtration $\G$.
For $n \ge 1$, let $\mathcal{R}_n = \bigcup_{i=1}^n \mathcal{C}_{v(i)}$ be the union of $\mathcal C_{v(i)}$, $i=1,\ldots,k$. 
Also, let $\hat{v}(n)$ be the most recent ancestor of $v(n)$ in $\mathcal R_{n-1}$ if $n > 1$, and let $\hat{v}(n)$ be the root of $\mathbb{T}_n$ if $n=1$.
Finally, $\mathbf{X}'\coloneqq (X'_v)_{v \in \T_n}$ is a i.i.d. copy of $\mathbf{X}=(X_v)_{v \in \T_n}$ and is independent of $\G_{n-1}$.

Now, we recall the statements of two lemmas in \cite{AB&M20} which will be useful in the proof of Proposition~\ref{prop:sub problem is hard}. The first lemma is a direct implication of the branching property.

\begin{lemma}[Lemma 3.2 in \cite{AB&M20}]
\label{lem:independence}
Fix any randomized search algorithm $\mathrm{v}=(v(n))_{n \ge 1}$. Then conditioned on $\G_{n-1}$, the family of random variables $(X_v-X_{\hat{v}(n)})_{v\in \mathcal{R}_n\setminus \mathcal{R}_{n-1} }$ has the same law as $(X'_v-X'_{\hat{v}(n)})_{v\in \mathcal{R}_n\setminus \mathcal{R}_{n-1}}$. 
\end{lemma}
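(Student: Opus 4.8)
The statement to prove is Lemma 3.2 from \cite{AB&M20}: conditioned on $\G_{n-1}$, the increments $(X_v - X_{\hat v(n)})_{v \in \mathcal R_n \setminus \mathcal R_{n-1}}$ have the same law as $(X'_v - X'_{\hat v(n)})_{v \in \mathcal R_n \setminus \mathcal R_{n-1}}$, where $\mathbf{X}'$ is an independent copy of $\mathbf{X}$.

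The plan is to reduce this to the branching property of the CREM stated in the introduction. First I would observe that, given $\G_{n-1}$, the vertex $v(n)$ is determined (it is $\G_{n-1}$-measurable, since $v(n)$ is $\tilde\F_{n-1}$-measurable and $\tilde\F_{n-1} \subset \G_{n-1}$), hence so is the set $\mathcal C_{v(n)}$ and therefore the set $\mathcal R_n \setminus \mathcal R_{n-1} \subseteq \mathcal C_{v(n)}$, as well as the vertex $\hat v(n)$. So on each atom of $\G_{n-1}$ all the combinatorial objects in the statement are frozen, and the only randomness left is in the CREM field itself. The key point is then that $\hat v(n)$ is, by construction, the most recent ancestor of $v(n)$ that lies in $\mathcal R_{n-1}$, so every vertex $v \in \mathcal R_n \setminus \mathcal R_{n-1}$ is a strict descendant of $\hat v(n)$ (or equal to it, contributing the trivial increment $0$), and none of these vertices — nor any information about the field below $\hat v(n)$ strictly beyond $\mathcal R_{n-1}$ — has been revealed in $\G_{n-1}$.

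Concretely, I would write $m = |\hat v(n)|$ and invoke the branching property: conditionally on $\F_m$, the subtree process $\mathbf{X}^{\hat v(n)} = (X^{\hat v(n)}_w)_{w}$ with $X^{\hat v(n)}_w = X_{\hat v(n) w} - X_{\hat v(n)}$ is distributed as $\mathbf{X}^{(m)}$ and is independent of $\F_m$. The increments $X_v - X_{\hat v(n)}$ for $v$ a descendant of $\hat v(n)$ are exactly the coordinates of this subtree process. The information in $\G_{n-1}$ about the field restricted to the subtree rooted at $\hat v(n)$ consists only of the values at vertices in $\mathcal R_{n-1}$; by the definition of $\hat v(n)$ as the most recent such ancestor, these revealed vertices inside the subtree are confined to a region that does not meet $\mathcal R_n \setminus \mathcal R_{n-1}$. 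A little care is needed here: I would argue that the sigma-algebra $\G_{n-1}$, when restricted to its action on the subtree process $\mathbf{X}^{\hat v(n)}$, only ever depends on coordinates indexed by vertices already in $\mathcal R_{n-1}$, and since $v(n)$ (hence $\mathcal C_{v(n)}$ and $\mathcal R_n \setminus \mathcal R_{n-1}$) is determined before any of the increments $(X_v - X_{\hat v(n)})_{v \in \mathcal R_n \setminus \mathcal R_{n-1}}$ are queried, the conditional law of the latter is the unconditioned law of the corresponding coordinates of a fresh CREM — which is precisely what $(X'_v - X'_{\hat v(n)})_{v \in \mathcal R_n \setminus \mathcal R_{n-1}}$ provides, $\mathbf{X}'$ being an independent copy.

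The main obstacle is bookkeeping rather than conceptual: one must be scrupulous about what exactly $\G_{n-1}$ knows versus what it does not, because $v(n)$ is chosen adaptively using the revealed field, yet the increments along the newly explored portion $\mathcal R_n \setminus \mathcal R_{n-1}$ must remain "fresh." The cleanest way I would organize this is to condition successively: first fix $v(1), \dots, v(n)$ and the revealed field $(X_w)_{w \in \mathcal R_{n-1}}$ (an atom of $\G_{n-1}$), note that this pins down $\hat v(n)$ and $\mathcal R_n \setminus \mathcal R_{n-1}$, then appeal to the Gaussian (or tree-Markov) structure — equivalently the branching property applied at $\hat v(n)$ together with the fact that the already-revealed vertices in the subtree below $\hat v(n)$ are disjoint from $\mathcal R_n \setminus \mathcal R_{n-1}$ — to conclude that the joint conditional law of the sought increments is the stated unconditional one. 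Since the paper explicitly says Lemma~\ref{lem:independence} is ``a direct implication of the branching property,'' I would keep the write-up short, emphasizing the measurability of $v(n)$, $\hat v(n)$ and $\mathcal R_n\setminus\mathcal R_{n-1}$ with respect to $\G_{n-1}$, and the disjointness that makes the branching property directly applicable.
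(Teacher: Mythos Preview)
The paper does not give its own proof of this lemma; it simply quotes Lemma~3.2 of \cite{AB&M20} and remarks that it is ``a direct implication of the branching property.'' Your outline therefore already goes further than the paper, and its overall shape --- freezing the combinatorics by $\G_{n-1}$-measurability of $v(n)$, $\hat v(n)$, $\mathcal R_n\setminus\mathcal R_{n-1}$, then invoking the branching property at $\hat v(n)$ --- is the right one.

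There is, however, a genuine gap. You justify the final step by ``the branching property applied at $\hat v(n)$ together with the fact that the already-revealed vertices in the subtree below $\hat v(n)$ are disjoint from $\mathcal R_n\setminus\mathcal R_{n-1}$.'' Mere disjointness is not sufficient: if some $w\in\mathcal R_{n-1}$ were a \emph{strict descendant} of $\hat v(n)$, then $X_w$ (which is $\G_{n-1}$-measurable) would in general be correlated with $(X_v-X_{\hat v(n)})_{v\in\mathcal R_n\setminus\mathcal R_{n-1}}$ via common ancestors below $\hat v(n)$, and the branching property at the single vertex $\hat v(n)$ says nothing about that. What actually makes the argument go through is the stronger structural fact that (whenever $\mathcal R_n\setminus\mathcal R_{n-1}\neq\emptyset$) \emph{no} vertex of $\mathcal R_{n-1}$ lies strictly below $\hat v(n)$. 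This uses the block structure of the chains $\mathcal C_v$: one first checks that $|\hat v(n)|$ must equal one of the level boundaries $\lfloor Nk/K\rfloor$, and then that if some $\mathcal C_{v(j)}$ with $j<n$ reached strictly below $\hat v(n)$, the whole block $\T^{\hat v(n)}$ would already lie in $\mathcal R_{n-1}$, forcing a strictly deeper ancestor of $v(n)$ into $\mathcal R_{n-1}$ and contradicting the maximality of $\hat v(n)$. The same block argument is what justifies your claim that every $v\in\mathcal R_n\setminus\mathcal R_{n-1}$ is a descendant of $\hat v(n)$; that inference does not follow from the definition of $\hat v(n)$ alone, since $\mathcal C_{v(n)}$ contains entire subtrees, not just ancestors of $v(n)$. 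Once both facts are in hand, $\G_{n-1}$ is generated by the $U_i$'s together with field values at vertices none of which is a strict descendant of $\hat v(n)$, and the branching property at $\hat v(n)$ yields the conclusion directly.
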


The next lemma states, roughly speaking, that $(z,K,\mathbf{X})$-steep vertices are rare.

\begin{lemma}[Lemma 3.3 in \cite{AB&M20}]
\label{lem:subproblem}
For all $K\in\N$ and $z>0$, for any $\gamma \in (0,(z \log 2)/K)$, for all $N$ sufficiently large, for any $w\in\T_N$, 
\[
\PR{\exists~v\in\mathcal C_{w}~:~v\mbox{ is }\mbox{$(z,K,\mathbf{X})$-steep}} \le e^{-\gamma N}
\] 
\end{lemma}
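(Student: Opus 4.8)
The plan is to decompose $\mathcal{C}_w$ into its $K$ hanging subtrees, reduce the event that some $v\in\mathcal{C}_w$ is $(z,K,\mathbf{X})$-steep to a finite union of one-sided Gaussian exceedances, and control each term by the Gaussian tail bound, preceded where needed by a union bound over leaves. Write $\ell_j=\floor{Nj/K}$ for $j=0,1,\dots,K$, so that the $k$-th subtree of $\mathcal{C}_w$ (for $k=0,\dots,K-1$) is $\T^{w[\ell_k]}_{\ell_{k+1}-\ell_k}$: a binary tree rooted at $w[\ell_k]$, of depth $\ell_{k+1}-\ell_k\le\ceil{N/K}$, whose $2^{\ell_{k+1}-\ell_k}\le 2^{N/K+1}$ leaves sit at depth $\ell_{k+1}$ of $\T_N$.

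The first step is to observe that the steepness of a vertex $v\in\mathcal{C}_w$ is witnessed by a single block-increment, and that every such increment is of one of two kinds: if $v$ lies in the $k$-th subtree and the witnessing block sits at depth $\le\ell_k$, then $v[\ell_j]=w[\ell_j]$ and the increment is $X_{w[\ell_j]}-X_{w[\ell_{j-1}]}$ along the spine of $w$; otherwise $v$ must be a leaf of the $k$-th subtree and the increment is $X_v-X_{w[\ell_k]}$. Hence the event of the lemma is contained in
\begin{align*}
&\bigcup_{j=1}^{K}\left\{X_{w[\ell_j]}-X_{w[\ell_{j-1}]}>N\sqrt{2\log 2\,(1+z)\,a_j}\right\} \\
&\qquad\cup\ \bigcup_{k=0}^{K-1}\ \bigcup_{\substack{w[\ell_k]\le v\\ \abs{v}=\ell_{k+1}}}\left\{X_v-X_{w[\ell_k]}>N\sqrt{2\log 2\,(1+z)\,a_{k+1}}\right\},
\end{align*}
a union of at most $K(1+2^{N/K+1})$ events (with some redundancy, which is harmless). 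In each of them the increment is a centered Gaussian whose variance, read off from \eqref{eq:cov of CREM}, equals $N(A(\ell_j/N)-A(\ell_{j-1}/N))$, respectively $N(A(\ell_{k+1}/N)-A(\ell_k/N))$; since $A$ is continuous and $\ell_i/N\to i/K$, these variances are $NKa_j(1+o(1))$, uniformly in the block index, as $N\to\infty$.

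Granting this, the estimate runs as follows. By $\PR{\cN(0,\sigma^2)>t}\le e^{-t^2/(2\sigma^2)}$, each spine term has probability at most $e^{-(1+o(1))(1+z)(\log 2)N/K}$, and for each fixed $k$ the union over the $\le 2^{N/K+1}$ leaf terms has probability at most $2^{N/K+1}e^{-(1+o(1))(1+z)(\log 2)N/K}=e^{-(\log 2)zN/K+o(N)}$. Summing over the $K$ spine events and the $K$ leaf-groups yields a bound of at most $3K\,e^{-(\log 2)zN/K+o(N)}$, which for $N$ large is $\le e^{-\gamma N}$ for every fixed $\gamma<(z\log 2)/K$. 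Note that the bottleneck exponent $(z\log 2)/K$ comes entirely from the leaf terms, where the gain $2^{N/K}$ from the number of leaves is beaten by the factor $2^{-(1+z)N/K}$ from the Gaussian tail; the spine terms are exponentially smaller and impose no constraint.

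The step I expect to be the main obstacle is the discretization bookkeeping: since $\ell_j/N=\floor{Nj/K}/N$ need not equal $j/K$, the true increment variances are $N(A(\ell_j/N)-A(\ell_{j-1}/N))$ rather than $NKa_j$. For blocks with $a_j>0$ this is harmless — continuity of $A$ gives the uniform $(1+o(1))$ factors used above. The delicate case is a degenerate block with $a_j=0$, where the threshold is $0$ and one therefore needs the matching increment to be (essentially) degenerate as well: this is immediate when $K\mid N$, in which case $A$ constant on $[(j-1)/K,j/K]$ forces the variance to vanish exactly, and for general $N$ one controls $A(\ell_j/N)-A(\ell_{j-1}/N)$ by the variation of $A$ on an interval of width $O(1/N)$ adjacent to the flat region $\{a_j=0\}$. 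This is precisely the verification carried out in the proof of Lemma~3.3 of \cite{AB&M20}, whose argument we follow.
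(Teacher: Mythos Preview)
The paper does not provide its own proof of this lemma; it is quoted verbatim as Lemma~3.3 of \cite{AB&M20} and used as a black box in the proof of Proposition~\ref{prop:sub problem is hard}. Your reconstruction is the standard argument, and it matches what is done in \cite{AB&M20}: decompose $\mathcal{C}_w$ into its $K$ block-subtrees, observe that a steepness witness for any $v\in\mathcal{C}_w$ is either a spine increment $X_{w[\ell_j]}-X_{w[\ell_{j-1}]}$ or a leaf increment $X_v-X_{w[\ell_k]}$ with $\abs{v}=\ell_{k+1}$, and then combine a union bound with the Gaussian tail estimate. Your identification of the leaf terms as the bottleneck, yielding the exponent $(z\log 2)/K$, is exactly right.

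One point to flag in your last paragraph: the fix you sketch for the degenerate case $a_j=0$ does not work as written. Controlling $A(\ell_j/N)-A(\ell_{j-1}/N)$ by the variation of $A$ on an interval of width $O(1/N)$ only shows that the increment variance is $O(1)$, not that it vanishes; but any non-degenerate centered Gaussian exceeds the threshold $N\sqrt{2\log 2(1+z)a_j}=0$ with probability exactly $1/2$, which is certainly not $\le e^{-\gamma N}$. With the definitions exactly as stated here (where $a_k$ is computed at the grid points $k/K$ while the increments live at $\lfloor Nk/K\rfloor/N$), this edge case is genuinely delicate and, strictly speaking, requires either an additional hypothesis (e.g.\ $a>0$ a.e., or restriction to $K\mid N$) or a variant of the definition that aligns the threshold with the actual increment variance. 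This is a discretization technicality orthogonal to the main idea; since the paper itself defers the entire proof to \cite{AB&M20}, there is no discrepancy to report between your approach and the paper's.
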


We now proceed to the proof of Proposition~\ref{prop:sub problem is hard}.

\begin{proof}[Proof of Proposition~\ref{prop:sub problem is hard}]
The goal is to show that $\tau'$  stochastically dominates a geometric random variable with an exponentially small parameter, which follows from an argument slightly adapted from the proof for the second part of Theorem 1.1 in \cite{AB&M20}. The argument goes as follows.

By Lemma~\ref{lem:independence}, 
\begin{align}
& \cPR{\exists v\in\mathcal{R}_n\setminus \mathcal{R}_{n-1} : \text{$v$ is $(z,K,\mathbf{X})$-steep}}{\G_{n-1}} \nonumber \\
 = \ &\cPR{\exists v\in\mathcal{R}_n\setminus \mathcal{R}_{n-1} : \text{$v$ is $(z,K,\mathbf{X}')$-steep}}{\G_{n-1}} \nonumber \\
 \le \ &
\cPR{\exists v\in\mathcal{C}_{v(n)} : \text{$v$ is $(z,K,\mathbf{X}')$-steep}}{\G_{n-1}} \nonumber \\
\le \ &
\sup_{w\in\T_N} 
\PR{\exists v\in\mathcal{C}_{w} : \text{$v$ is $(z,K,\mathbf{X}')$-steep}}. \label{eq:steep sup}
\end{align}
The first inequality uses the fact that $\mathcal{R}_n\setminus \mathcal{R}_{n-1} \subset \mathcal{C}_{v(n)}$ and the second inequality uses the independence of $\mathbf{X}'$ and $\G_{n-1}$.
Since $\mathbf{X}'$ and $\mathbf{X}$ have the same law, 
by Lemma~\ref{lem:subproblem}, with $\gamma=\gamma(K,z)$ as in that lemma, \eqref{eq:steep sup} yields that 
\[
\cPR{\exists v\in\mathcal{R}_n\setminus \mathcal{R}_{n-1} : \text{$v$ is $(z,K,\mathbf{X})$-steep}}{\G_{n-1}} \le  e^{-\gamma N}.
\]
We thus obtain 
\begin{align*}
\PR{\tau'=n}
& = \EX{\Ind_{\tau > n-1}\cdot \cPR{\tau=n}{\G_{n-1}}} \\
& = \EX{\Ind_{\tau > n-1}\cdot  \cPR{\exists v\in\mathcal{R}_n\setminus \mathcal{R}_{n-1} : \text{$v$ is $(z,K,\mathbf{X})$-steep}}{\G_{n-1}}} \\
& \le \PR{\tau > n-1} \cdot e^{-\gamma N} \, ,
\end{align*}
from which we conclude that $\tau'$ stochastically dominates a geometric random variable with success probability $e^{-\gamma N}$. In particular, this implies that for any positive constant $\gamma' \in (0,\gamma)$, 
\begin{align*}
\PR{\tau' \geq e^{\gamma'N}} \geq \sum_{n=\ceil{e^{\gamma'N}}}^\infty e^{-\gamma N} (1-e^{-\gamma N})^n \sim \exp(-e^{-(\gamma-\gamma')N})\rightarrow 1,
\end{align*}
as $N\rightarrow\infty$.
\end{proof}

\appendix

\section{A lower bound of the free energy \texorpdfstring{$F_\beta$}{}} \label{ann:lowbnd of Fe}

Recall that the free energy of the CREM is defined in \eqref{def:Fe} as 
\begin{align*}
F_\beta
&\coloneqq
\lim_{N\rightarrow\infty}\frac{1}{N} \EX{\log Z_{\beta,N}}.
\end{align*}
Recall also that we defined in \eqref{eq:tilde F_beta} the quantity 
\begin{align*}
\tilde{F}_{\beta} = \int_0^1 f(\beta\sqrt{a(s)}) \dd{s},
\end{align*}
where
\begin{align*}
f(\beta)
=
\begin{cases}
\displaystyle
\log 2 + \frac{\beta^2}{2}, & \beta < \sqrt{2\log 2} \\
\\
\sqrt{2\log 2} \beta, & \beta \geq \sqrt{2\log 2}.
\end{cases}
\end{align*}
The main goal of this section is to prove the following proposition, which asserts that $F_\beta\geq \tilde{F}_{\beta}$, and that equality holds if and only if $\beta\leq \beta_G$.
\begin{proposition}
\label{prop:FE comparison}
Suppose that $A$ is non-concave. For all $\beta\in [0,\infty)$, define 
\begin{align*}
G_\beta \coloneqq F_\beta - \tilde{F}_\beta.
\end{align*}
Then,
\begin{enumerate}[(i)]
\item \label{prop:FE comparison.1} For all $\beta \in [0,\beta_G]$, $G_\beta=0$.
\item \label{prop:FE comparison.2} For all $\beta > \beta_G$, $G_\beta'>0$. In particular, this implies that $G_\beta>0$ for all $\beta>\beta_G$.
\end{enumerate}
\end{proposition}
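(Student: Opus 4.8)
The plan is to work directly with the formula $F_\beta=\int_0^1 f(\beta\sqrt{\hat a(s)})\dd s$ from \eqref{def:Fe} and $\tilde F_\beta=\int_0^1 f(\beta\sqrt{a(s)})\dd s$ from \eqref{eq:tilde F_beta}, and to analyze the pointwise difference of the two integrands as a function of $s$. Write $G_\beta=\int_0^1\big(f(\beta\sqrt{\hat a(s)})-f(\beta\sqrt{a(s)})\big)\dd s$. The key structural fact I would extract first is that $\hat a\ge a$ almost nowhere in the naive sense, but rather: on the set $\{A=\hat A\}$ one has $\hat a=a$ a.e., while on each maximal open interval $I$ of $\{A<\hat A\}$, $\hat a$ is constant (since $\hat A$ is affine there) and equals the average slope $(\hat A(b)-\hat A(a))/(b-a)$ over $I=(a,b)$; moreover $\hat A\ge A$ everywhere and $\int_I\hat a=\int_I a$. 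On such an interval $\hat a$ is a constant $\ge$ the average of $a$, but $a$ itself fluctuates; the point is that $\esssup_{t\in\{A\ne\hat A\}}\sqrt{a(t)}$ controls how large $\beta\sqrt{a(t)}$ can get on the flat regions.

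Step one: prove (i), i.e. $G_\beta=0$ for $\beta\le\beta_G$. On $\{A=\hat A\}$ the integrands agree a.e., so only the flat intervals $I=(a,b)$ of $\{A<\hat A\}$ contribute. On such an $I$, for all $t\in I$ we have $\beta\sqrt{a(t)}\le\beta_G\sqrt{a(t)}\le\beta_G\esssup_{\{A\ne\hat A\}}\sqrt a=\sqrt{2\log 2}$ by the definition \eqref{eq:threshold} of $\beta_G$, so the integrand $f(\beta\sqrt{a(t)})$ equals the quadratic branch $\log 2+\beta^2 a(t)/2$ a.e.\ on $I$. Likewise $\hat a$ is constant on $I$ with $\hat a=\frac1{|I|}\int_I a\le\esssup_I a\le (\sqrt{2\log2}/\beta_G)^2$ (using $\beta\le\beta_G$ again as needed), so $f(\beta\sqrt{\hat a})=\log 2+\beta^2\hat a/2$ on $I$ as well. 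Then $\int_I f(\beta\sqrt{\hat a})=|I|\log 2+\tfrac{\beta^2}{2}\int_I\hat a=|I|\log2+\tfrac{\beta^2}{2}\int_I a=\int_I f(\beta\sqrt a)$, using $\int_I\hat a=\int_I a$. Summing over the (countably many) flat intervals gives $G_\beta=0$. Here $\beta\le\beta_G$ is used precisely to keep \emph{both} integrands in the quadratic regime where $f$ is affine in $a$, so that $\int_I\hat a=\int_I a$ can be applied.

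Step two: prove (ii), $G_\beta'>0$ for $\beta>\beta_G$, hence $G_\beta>0$ (since $G_{\beta_G}=0$ by (i) and continuity). I would differentiate under the integral sign: $f$ is convex, $C^1$, with $f'(x)=x$ for $x<\sqrt{2\log2}$ and $f'(x)=\sqrt{2\log2}$ for $x\ge\sqrt{2\log2}$, so $f'$ is nondecreasing, and $f(\beta x)$ has $\beta$-derivative $x f'(\beta x)$. Thus $G_\beta'=\int_0^1\big(\sqrt{\hat a(s)}f'(\beta\sqrt{\hat a(s)})-\sqrt{a(s)}f'(\beta\sqrt{a(s)})\big)\dd s$. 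The map $u\mapsto u f'(\beta u)$ is convex and nondecreasing in $u\ge0$ (it is $\beta u^2$ then linear). On $\{A=\hat A\}$ the integrand vanishes a.e. On a flat interval $I=(a,b)$ with constant value $\hat a=\frac1{|I|}\int_I a$: by Jensen applied to the convex function $u\mapsto u f'(\beta u)$ composed with $u=\sqrt{a(s)}$... — more cleanly, apply Jensen directly to the convex increasing function $\phi(v):=\sqrt v\,f'(\beta\sqrt v)$ of $v\ge0$, giving $\phi(\hat a)=\phi\big(\tfrac1{|I|}\int_I a\big)\le\tfrac1{|I|}\int_I\phi(a(s))\dd s$, so the integrand contribution over $I$ is $\le 0$ — that's the wrong sign. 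So instead I would track the quantity whose derivative is correct: note $f(\beta\sqrt v)$ as a function of $v$ is \emph{not} convex in general, but one checks $v\mapsto f(\beta\sqrt v)$ is concave for $\beta v$ small... This sign bookkeeping is the crux; the clean route is: for $\beta>\beta_G$ there is a flat interval $I$ and a positive-measure subset on which $\beta\sqrt{a(t)}>\sqrt{2\log2}>\beta\sqrt{\hat a}$ is \emph{impossible} — rather, on which $\beta\sqrt{a(t)}$ straddles $\sqrt{2\log2}$, making $v\mapsto f(\beta\sqrt v)$ strictly concave there, so Jensen gives $\int_I f(\beta\sqrt{a})<|I|f(\beta\sqrt{\hat a})$ whenever $a$ is genuinely non-constant on $I$; summing yields $G_\beta>0$, and the same Jensen run with the strict inequality differentiated in $\beta$ gives $G_\beta'>0$.

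The main obstacle I anticipate is exactly this last step: making the concavity/convexity argument on the flat intervals rigorous and getting the \emph{strict} inequality. One must identify a flat interval $I$ of $\{A<\hat A\}$ on which (for $\beta$ just above $\beta_G$) the rescaled slope $\beta\sqrt{a(\cdot)}$ spends positive measure strictly above $\sqrt{2\log2}$ and, by $\int_I\hat a=\int_I a$, also positive measure below; on the union of these regions $v\mapsto f(\beta\sqrt v)$ is strictly concave, and since $a$ is non-constant on $I$ (otherwise $\hat A$ would not strictly exceed $A$ there, or $a$ would already be its own average) Jensen is strict. Packaging "$\beta>\beta_G$ forces a straddling flat interval" cleanly from the definition $\beta_G=\sqrt{2\log2}/\esssup_{\{A\ne\hat A\}}\sqrt a$ — and differentiating the strict Jensen bound in $\beta$ to land $G_\beta'>0$ rather than merely $G_\beta\ge0$ — is where the real care is needed; everything else (Leibniz rule, decomposition into $\{A=\hat A\}$ and the flat intervals, $\int_I\hat a=\int_I a$) is routine.
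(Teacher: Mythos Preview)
For part~(i) your argument is essentially the paper's: decompose into $\{A=\hat A\}$ (where $a=\hat a$ a.e.) and the flat intervals of $\{A<\hat A\}$, observe that for $\beta\le\beta_G$ both $\beta\sqrt{a}$ and $\beta\sqrt{\hat a}$ stay in the quadratic branch of $f$ there, and invoke $\int_I\hat a=\int_I a$.

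For part~(ii) there is a genuine gap, caused by a miscalculation of convexity. The map $u\mapsto uf'(\beta u)$ is \emph{not} convex: its derivative is $2\beta u$ for $u<\sqrt{2\log2}/\beta$ and then drops to the constant $\sqrt{2\log2}$. More importantly, the map $\phi(v)=\sqrt v\,f'(\beta\sqrt v)$ that you call ``convex increasing'' is in fact \emph{concave}: it equals $\beta v$ for $v\le 2\log2/\beta^2$ and $\sqrt{2\log2}\sqrt v$ thereafter, with the derivative dropping from $\beta$ to $\beta/2$ at the join. So Jensen applied to $\phi$ and the probability measure $\frac{1}{|I|}\dd s$ on a flat interval $I$ (where $\hat a_I=\frac{1}{|I|}\int_I a$) gives the \emph{correct} sign $\phi(\hat a_I)\ge\frac{1}{|I|}\int_I\phi(a(s))\dd s$, i.e.\ each flat interval contributes nonnegatively to $G_\beta'$. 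For $\beta>\beta_G$ the inequality is strict on the flat interval where $a>2\log2/\beta^2$ on positive measure, since $\phi$ is affine only on $[0,2\log2/\beta^2]$. Your subsequent detour through ``concavity of $v\mapsto f(\beta\sqrt v)$'' and the hand-wave about ``differentiating the strict Jensen bound'' is therefore unnecessary; once $\phi$ is correctly seen to be concave, $G_\beta'>0$ follows immediately.

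The paper bypasses Jensen altogether and argues by a direct case split on whether $\beta\sqrt{\hat a_I}$ is below or above $\sqrt{2\log2}$. In the first case it uses $\int_I\hat a=\int_I a$ together with $f'(x)\le x$; in the second it uses the strict Cauchy--Schwarz inequality $\int_I\sqrt a<\int_I\sqrt{\hat a}$ (which holds because $a$ cannot be constant on a flat interval of $\{A<\hat A\}$) together with $f'(x)\le\sqrt{2\log2}$. Either route is short; your Jensen idea is arguably slicker but only works once the concavity is computed correctly.
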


Before starting the proof, we recall that the free energy of the CREM has the following formula which can be found in Bovier and Kurkova \cite{CREM04} based on previous results by Capocaccia et al. \cite{GREMFreeEnergy}.
\begin{fact}
\label{fact:Fe}
Given $\beta>0$, let $t_0(\beta) = \sup\{t\in [0,1]:\hat{a}(t)>2\log 2/\beta^2\}$. Then, the free energy of the CREM is given as follows
\begin{align}
F_{\beta}
&= \beta \sqrt{2\log 2}\int_0^{t_0(\beta)} \sqrt{\hat{a}(t)} \dd{t}
+ \frac{\beta^2}{2} (1-\hat{A}(t_0(\beta))) + \log 2 (1-t_0(\beta)) \label{eq:Fe.CREM04} \\
&= \int_0^1 f(\beta\sqrt{\hat{a}(s)})\dd{s}, \label{eq:Fe.speed changed}
\end{align}
where $f(\beta)$ is defined as \eqref{eq:BRW Fe}.
\end{fact}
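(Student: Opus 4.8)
The plan is to read the statement the way it is phrased: \eqref{eq:Fe.CREM04} is the Bovier--Kurkova evaluation of the CREM free energy --- it is the part attributed to \cite{CREM04}, itself resting on the GREM computation of \cite{GREMFreeEnergy} --- and what remains to be proved ``for completeness'', and what yields \eqref{def:Fe}, is the purely deterministic identity that the right-hand side of \eqref{eq:Fe.CREM04} equals $\int_0^1 f(\beta\sqrt{\hat a(s)})\dd{s}$. So I would open the proof by recalling \eqref{eq:Fe.CREM04} with its reference, and then devote the argument to this algebraic equivalence.

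The one structural input is that $\hat a$, being the right derivative of the concave function $\hat A$, is non-increasing on $[0,1]$. Hence the super-level set $\{s : \hat a(s) > 2\log 2/\beta^2\}$ is an initial sub-interval of $[0,1]$ and $t_0(\beta) = \sup\{t : \hat a(t) > 2\log 2/\beta^2\}$ (with $\sup\varnothing = 0$) is exactly its right endpoint: $\hat a(s) > 2\log 2/\beta^2$ for $s < t_0(\beta)$ and $\hat a(s) \le 2\log 2/\beta^2$ for $s > t_0(\beta)$. Since by \eqref{eq:BRW Fe} the inequality $\hat a(s) > 2\log 2/\beta^2$ is equivalent to $\beta\sqrt{\hat a(s)} > \sqrt{2\log 2}$, this means $f(\beta\sqrt{\hat a(s)}) = \sqrt{2\log 2}\,\beta\sqrt{\hat a(s)}$ on $[0,t_0(\beta))$ and $f(\beta\sqrt{\hat a(s)}) = \log 2 + \tfrac{\beta^2}{2}\hat a(s)$ on $(t_0(\beta),1]$.

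Splitting $\int_0^1 f(\beta\sqrt{\hat a(s)})\dd{s}$ at $t_0(\beta)$ and substituting the two branches then gives
\[
\int_0^1 f(\beta\sqrt{\hat a(s)})\dd{s} = \beta\sqrt{2\log 2}\int_0^{t_0(\beta)}\sqrt{\hat a(s)}\dd{s} + \log 2\,(1-t_0(\beta)) + \frac{\beta^2}{2}\int_{t_0(\beta)}^1\hat a(s)\dd{s},
\]
and I would finish by using $\int_{t_0(\beta)}^1\hat a(s)\dd{s} = \hat A(1) - \hat A(t_0(\beta)) = 1 - \hat A(t_0(\beta))$ (because $\hat A(1) = A(1) = 1$), which matches the right-hand side of \eqref{eq:Fe.CREM04} term by term. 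I would remark that the degenerate cases $t_0(\beta) = 0$ (small $\beta$: the first integral and $\hat A(t_0(\beta))$ both vanish, leaving $\log 2 + \beta^2/2$) and $t_0(\beta) = 1$ (large $\beta$: the second integral vanishes and $\hat A(t_0(\beta)) = 1$) are covered by the same computation, and that the value of the integrand at the single point $s = t_0(\beta)$ is irrelevant since $f$ is continuous, indeed $C^1$, at the junction $\sqrt{2\log 2}$. This completes \eqref{def:Fe}.

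If one wanted \eqref{eq:Fe.CREM04} itself rather than citing it, I would reduce to the finite-level case: approximate $A$ uniformly on a refining partition by its piecewise-linear interpolant $A_K$, apply Kahane's Gaussian interpolation inequality to obtain $|\E[\log Z_{\beta,N}] - \E[\log Z_{\beta,N}']| \le \beta^2 N\,\|A - A_K\|_\infty$ where $Z_{\beta,N}'$ is the partition function built from $A_K$ (so the two free energies share a common limit); then check that $A \mapsto \int_0^1 f(\beta\sqrt{\hat a(s)})\dd{s}$ is continuous for $\|\cdot\|_\infty$ (the concave-hull map is $1$-Lipschitz for $\|\cdot\|_\infty$, the $\hat a_K$ are uniformly bounded monotone functions whose antiderivatives converge, so $\hat a_K \to \hat a$ almost everywhere by Helly's selection theorem, and dominated convergence applies); and invoke the classical GREM free energy formula for $A_K$. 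In that route the genuine work is the GREM evaluation, whose ``concavification'' is the combinatorial core --- but since the statement as written already attributes \eqref{eq:Fe.CREM04} to \cite{CREM04,GREMFreeEnergy}, the proof I would actually write is just the short deterministic computation of the preceding two paragraphs.
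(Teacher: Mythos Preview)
Your proposal is correct and follows essentially the same route as the paper: cite \eqref{eq:Fe.CREM04} from \cite{CREM04,GREMFreeEnergy}, use the monotonicity of $\hat a$ to split $\int_0^1 f(\beta\sqrt{\hat a(s)})\dd{s}$ at $t_0(\beta)$, apply the two branches of $f$, and identify $\int_{t_0(\beta)}^1 \hat a(s)\dd{s} = 1 - \hat A(t_0(\beta))$. Your write-up is in fact more thorough than the paper's --- the paper does not spell out the degenerate cases $t_0(\beta)\in\{0,1\}$ or the irrelevance of the boundary point, nor does it sketch the Kahane/GREM approximation argument you outline at the end (it only remarks in one sentence that the extension from $C^1$ to Riemann-integrable $a$ goes through).
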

\begin{proof}
The proof of \eqref{eq:Fe.CREM04} can be found in Theorem 3.3 of Bovier and Kurkova \cite{CREM04}. While the authors of that paper assumed that the function $A$ has to be continuously differentiable, their result can be extended to the case where $A$ is merely Riemann integrable. This extension is possible because their argument is based on the following two ingredients. The first ingredient is the free energy formula for the GREM, given by Capocaccia et al. \cite{GREMFreeEnergy}. As a remark, the definition of the GREM is identical to the definition of the CREM except that the function $A$ is a step function. The second ingredient is a Gaussian comparison argument that only requires the Riemann integrability of $a$.

Now, by \eqref{eq:Fe.CREM04} and the fact that $\hat{a}$ is non-increasing, 
\begin{align*}
&\beta \sqrt{2\log 2}\int_0^{t_0(\beta)} \sqrt{\hat{a}(t)} \dd{t}
+ \frac{\beta^2}{2} (1-\hat{A}(t_0(\beta))) + \log 2 (1-t_0(\beta)) \nonumber \\
&= \beta \sqrt{2\log 2}\int_0^{t_0(\beta)} \sqrt{\hat{a}(t)} \dd{t}
+ \int_{t_0(\beta)}^1  \left(\frac{\beta^2}{2}\hat{a}(t) + \log 2\right) \dd{t} \\
&= \int_0^{t_0(\beta)} f(\beta\sqrt{\hat{a}(t)})\dd{t} + \int_{t_0(\beta)}^1 f(\beta\sqrt{\hat{a}(t)})\dd{t} = \int_0^1 f(\beta\sqrt{\hat{a}(t)})\dd{t},
\end{align*}
which proves \eqref{eq:Fe.speed changed}.
\end{proof}

To prove Proposition~\ref{prop:FE comparison}, we require the following three lemmas. The first lemma provides some useful properties of the function $A$ and its concave hull $\hat{A}$. 

\begin{lemma}
\label{lem:prop of concave hull}
The following are true.
\begin{enumerate}[(i)]
\item \label{lem:prop of concave hull.1} On the set $\{A=\hat{A}\}$, $a=\hat{a}$ almost everywhere. 
\item \label{lem:prop of concave hull.2} Suppose that $A$ is non-concave. Let $\mathcal{I}$ be a connected component of $\{t\in [0,1]:A(t)<\hat{A}(t)\}$. Then, $\hat{a}$ is equal to a positive constant on the interior of $\mathcal{I}$, denoted by $\hat{a}_\mathcal{I}$. Moreover, 
\begin{align*}
\int_\mathcal{I} a(s) \dd{s} = \int_\mathcal{I} \hat{a}(s) \dd{s} = \hat{a}_\mathcal{I}\abs{\mathcal{I}},
\end{align*}
where $\abs{\mathcal{I}}$ denotes the Lebesgue measure of $\mathcal{I}$.
\item \label{lem:prop of concave hull.3} With the same assumptions as in \eqref{lem:prop of concave hull.2}, we have
\begin{align*}
\int_\mathcal{I} \sqrt{a(s)} \dd{s} < \int_\mathcal{I} \sqrt{\hat{a}(s)} \dd{s} = \sqrt{\hat{a}_\mathcal{I}}\abs{\mathcal{I}}.
\end{align*}
\end{enumerate}
\end{lemma}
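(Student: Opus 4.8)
The plan is to prove (ii) first, since both (i) and (iii) rely on it. Since $A$ and $\hat A$ are continuous, $\{A<\hat A\}$ is open, hence a countable disjoint union of open intervals; fix one component $\mathcal I=(\ell,r)$. At each endpoint the function meets its hull: $A(0)=\hat A(0)=0$ and $A(1)=\hat A(1)=1$, while for an interior endpoint $t\in\{\ell,r\}\cap(0,1)$ one has $t\notin\{A<\hat A\}$ together with $\hat A\ge A$, so $A(t)=\hat A(t)$. I would then use the standard description of the least concave majorant as a concave hull: for every $t$, $\hat A(t)=\sup\{\lambda A(u)+(1-\lambda)A(v): u\le t\le v,\ \lambda u+(1-\lambda)v=t,\ \lambda\in[0,1]\}$, the supremum being attained by compactness and continuity, and (by a one-dimensional Carathéodory argument) when $\hat A(t)>A(t)$ it is attained at a pair $u<t<v$ lying on the contact set $\{A=\hat A\}$ --- otherwise one could improve the convex combination. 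For such a pair, $\hat A$ coincides on $[u,v]$ with the affine chord through $(u,A(u))$ and $(v,A(v))$: it dominates that chord by concavity and agrees with it at the three collinear points $u,t,v$, which forces equality throughout $[u,v]$. Applying this to an arbitrary $t\in\mathcal I$ and noting that the contact points must satisfy $u\le\ell$ and $v\ge r$, we get that $\hat A$ is affine on $[\ell,r]$, so $\hat a\equiv\hat a_{\mathcal I}$ on the interior of $\mathcal I$ for a constant $\hat a_{\mathcal I}\ge0$ (non-negativity because $\hat A\le 1=\hat A(1)$ makes $\hat A$ non-decreasing). Positivity: if $\hat a_{\mathcal I}=0$ then $\hat A$ is constant on $[\ell,r]$, so $A(\ell)=\hat A(\ell)=\hat A(r)=A(r)$, and monotonicity of $A$ forces $A\equiv\hat A$ on $\mathcal I$, a contradiction. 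The two integral identities follow from the fundamental theorem of calculus and the contact relations: $\int_{\mathcal I}\hat a=\hat A(r)-\hat A(\ell)=\hat a_{\mathcal I}\abs{\mathcal I}$ and $\int_{\mathcal I}a=A(r)-A(\ell)=\hat A(r)-\hat A(\ell)$.

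For (i), let $E=\{A=\hat A\}$. Both $A$ and $\hat A$ are absolutely continuous (primitives of bounded functions; $\hat A$ also concave), hence differentiable almost everywhere with $A'=a$ and $\hat A'=\hat a$ a.e.\ (the right derivative of the concave $\hat A$ agreeing with its derivative at any point of differentiability). By the Lebesgue density theorem, almost every $t\in E$ is a point of density $1$ of $E$; intersecting with the full-measure set where $A$ and $\hat A$ are both differentiable, put $g=\hat A-A\ge0$. Then $g(t)=0$, and the density property yields sequences $s_n\downarrow t$ and $u_n\uparrow t$ in $E$ along which $g$ vanishes, so $g'(t)=0$, i.e.\ $a(t)=\hat a(t)$. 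This holds for a.e.\ $t\in E$, which is the assertion.

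For (iii), item (ii) gives $\int_{\mathcal I}\sqrt{\hat a}=\sqrt{\hat a_{\mathcal I}}\abs{\mathcal I}$ and $\frac{1}{\abs{\mathcal I}}\int_{\mathcal I}a=\hat a_{\mathcal I}$. Jensen's inequality applied to the strictly concave map $x\mapsto\sqrt x$ gives $\frac{1}{\abs{\mathcal I}}\int_{\mathcal I}\sqrt a\le\sqrt{\frac{1}{\abs{\mathcal I}}\int_{\mathcal I}a}=\sqrt{\hat a_{\mathcal I}}$, with equality if and only if $a$ equals a constant a.e.\ on $\mathcal I$; that constant must be $\hat a_{\mathcal I}$ by the identity just used. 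But $a\equiv\hat a_{\mathcal I}$ a.e.\ on $\mathcal I$ forces $A$ to be affine on $[\ell,r]$ with the same slope and endpoint values as the affine function $\hat A|_{[\ell,r]}$, hence $A\equiv\hat A$ on $\mathcal I$ --- contradicting $A<\hat A$ on $\mathcal I$. So the inequality is strict, completing the proof.

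The main obstacle is item (ii): establishing rigorously that the least concave majorant is affine on each connected component of $\{A<\hat A\}$, that is, controlling the contact-point structure of the concave hull. Once (ii) is in place, item (i) is a routine Lebesgue-differentiation argument and item (iii) a strict-Jensen argument whose equality case is ruled out by the explicit affine description of $\hat A$ on $\mathcal I$.
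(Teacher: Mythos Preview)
Your proof is correct and follows the same overall structure as the paper's, with only cosmetic differences in the tools invoked. For (ii) you spell out in more detail why $\hat A$ is affine on a component of $\{A<\hat A\}$ (via the sup-over-chords description and a contact-point argument), whereas the paper simply appeals to ``the minimality of $\hat A$''; your version is the more rigorous of the two. For (i) the paper observes directly that every point of $\{A=\hat A\}$ is a global minimum of the nonnegative function $\hat A-A$, so Fermat's theorem gives $\hat a=a$ at any interior differentiability point --- this is slightly shorter than your density-point argument, which is also correct but not needed since a global minimum already forces a zero derivative without any sequence construction. For (iii) the paper uses Cauchy--Schwarz on $\sqrt{a}=\sqrt{a/\hat a_{\mathcal I}}\cdot\sqrt{\hat a_{\mathcal I}}$ while you use Jensen for $x\mapsto\sqrt x$; the two are equivalent and the strict-inequality case analysis is identical.
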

\begin{proof}
We prove this lemma by addressing each point separately.

\noindent Proof of \eqref{lem:prop of concave hull.1}. The set $\{A=\hat{A}\}$ is Lebesgue measurable because $\{A=\hat{A}\}=(\hat{A}-A)^{-1}(\{ 0\})$ and the function $\hat{A}-A$ is continuous. If $\{A=\hat{A}\}$ is of measure zero, the statement trivially holds

Suppose now that $\{A=\hat{A}\}$ has positive measure. Note that $\{A=\hat{A}\}$ contains all the global maximum points of $\hat{A}-A$ as $\hat{A}\geq A$.  Thus, by Fermat's theorem of stationary points, for all $t\in \{A=\hat{A}\}\cap \{\text{$\hat{A}-A$ is differentiable}\}$, we have $\hat{a}(t)=a(t)$. It remains to show that $t\in \{A=\hat{A}\}\cap \{\text{$\hat{A}-A$ is not differentiable}\}$ is of measure zero. By the fundamental theorem of calculus and the fact that $\hat{a}-a$ is continuous almost everywhere on $[0,1]$, the function $\hat{A}-A$ is differentiable almost everywhere on $[0,1]$. Therefore, $\{A=\hat{A}\}\cap \{\text{$\hat{A}-A$ is not differentiable}\}$ is of measure zero.

\noindent Proof of \eqref{lem:prop of concave hull.2}. Let $\mathcal{I}$ be a connected component of $\{A\neq \hat{A}\}$ with endpoints $t_1$ and $t_2$. By the continuity of $A$ and $\hat{A}$, $A(t_1)=\hat{A}(t_1)$ and $A(t_2)=\hat{A}(t_2)$. By the minimality of $\hat{A}$, for all $t\in\text{int}(\mathcal{I})$, $\hat{A}(t)$ is equals to the linear interpolation between $A(t_1)$ and $A(t_2)$. In particular, this implies that the $\hat{a}$ is constant on $\text{int}(\mathcal{I})$. Moreover, $\hat{a}$ has to be positive. Otherwise, by the fundamental theorem of calculus, $A(t)=\hat{A}(t)$ for any $t\in\text{int}(\mathcal{I})$ which contradicts the assumption that $\mathcal{I}$ is a connected component of $\{A\neq \hat{A}\}$.

To prove the second statement of \eqref{lem:prop of concave hull.2}, note that 
\begin{align*}
\int_\mathcal{I} a(s) \dd{s} = A(t_2) - A(t_1) = \hat{A}(t_2) - \hat{A}(t_2) = \int_\mathcal{I} \hat{a}(s) \dd{s} = \hat{a}_\mathcal{I}\abs{\mathcal{I}}.
\end{align*}

\noindent Proof of \eqref{lem:prop of concave hull.3}. By \eqref{lem:prop of concave hull.2}, $\hat{a}_\mathcal{I}$ is positive. Then, by the Cauchy--Schwarz inequality, 
\begin{align}
\int_\mathcal{I} \sqrt{a(s)} \dd{s}
= \int_\mathcal{I} \frac{\sqrt{a(s)}}{\sqrt{\hat a_\mathcal{I}}}\sqrt{\hat{a}_{\mathcal{I}}} \dd{s}
< \sqrt{\int_\mathcal{I}\frac{a(s)}{\hat{a}_{\mathcal{I}}}\dd{s}} \sqrt{\int_{\mathcal{I}}\hat{a}_{\mathcal{I}\dd{s}}}
=\sqrt{\int_\mathcal{I}\frac{a(s)}{\hat{a}_{\mathcal{I}}}\dd{s}}\sqrt{\hat{a}_\mathcal{I}\abs{\mathcal{I}}}. \label{eq:C-S}
\end{align}
Note that the inequality above is strict as the equality holds if and only if there exists $c\in\R$ such that $a=c\hat{a}_\mathcal{I}$. If that was the case, then by \eqref{lem:prop of concave hull.2}, $c=1$, and therefore $A=\hat{A}$ on $\mathcal{I}$ which is a contradiction.

Now, by \eqref{lem:prop of concave hull.2} and \eqref{eq:C-S},
\begin{align*}
\int_\mathcal{I} \sqrt{a(s)} \dd{s}
< \underbrace{\sqrt{\frac{\int_\mathcal{I}a(s)\dd{s}}{\hat{a}_{\mathcal{I}}\abs{\mathcal{I}}}}}_{=1} \sqrt{\hat{a}_{\mathcal{I}}}\abs{\mathcal{I}} 
= \int_\mathcal{I} \sqrt{\hat{a}(s)} \dd{s},
\end{align*}
and the proof is completed.
\end{proof}

The second lemma collects two useful implications from the definition of $\beta_G$. The first one characterizes the $\beta$ such that $\beta\sqrt{a(t)}\leq \sqrt{2\log 2}$ for almost every $t\in\{A\neq \hat{A}\}$, and the second one shows that when $\beta\leq \beta_G$, $\beta\sqrt{\hat{a}(t)}\leq \sqrt{2\log 2}$ for all $t\in\{A\neq \hat{A}\}$.

\begin{lemma}
\label{lem:beta_G}
Suppose that $A$ is non-concave. Then the following statements hold.
\begin{enumerate}[(i)]
\item \label{lem:beta_G.1} $\beta\leq \beta_G$ if and only if the set 
\begin{align*}
\{A\neq\hat{A}\}\cap\{s\in [0,1]:\beta\sqrt{a(s)} > \sqrt{2\log 2}\}
\end{align*}
is of measure zero.
\item \label{lem:beta_G.2} If $\beta\leq \beta_G$, then for every connected component $\mathcal{I}$ of $\{A\neq \hat{A}\}$, we have $\beta\sqrt{\hat{a}_{\mathcal{I}}}\leq \sqrt{2\log 2}$.
\end{enumerate}
\end{lemma}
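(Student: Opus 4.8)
The plan is to deduce both statements directly from the definition of $\beta_G$ in \eqref{eq:threshold}: part~\eqref{lem:beta_G.1} is a measure-theoretic reformulation of the essential supremum, and part~\eqref{lem:beta_G.2} then follows by combining part~\eqref{lem:beta_G.1} with Lemma~\ref{lem:prop of concave hull}\eqref{lem:prop of concave hull.2}.

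For \eqref{lem:beta_G.1}, I would proceed as follows. First note that $\{A\neq\hat A\}=\{A<\hat A\}$ is a nonempty open set, since $A$ is non-concave and $\hat A-A$ is continuous, and hence has positive Lebesgue measure; combined with the boundedness of $a$ and with Lemma~\ref{lem:prop of concave hull}\eqref{lem:prop of concave hull.2} (which forces the average of $a$ over any connected component to be positive, so that $a$ cannot vanish a.e.\ there), this shows $S\coloneqq\esssup_{t\in\{A\neq\hat A\}}\sqrt{a(t)}\in(0,\infty)$, so $\beta_G=\sqrt{2\log 2}/S$ is well defined and positive. For $\beta>0$, the inequality $\beta\le\beta_G$ is then equivalent to $S\le\sqrt{2\log 2}/\beta$. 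I would next invoke the elementary fact that, for any measurable function $h$ and constant $c$, one has $\esssup h\le c$ if and only if $\{h>c\}$ is Lebesgue-null --- the ``if'' direction being the definition of the essential supremum, and the ``only if'' direction following from $\{h>c\}=\bigcup_{n\ge 1}\{h>c+1/n\}$ together with countable subadditivity. Applying this with $h=\sqrt a$ on $\{A\neq\hat A\}$ and $c=\sqrt{2\log 2}/\beta$, and then multiplying the inner inequality by $\beta>0$, yields exactly that $\beta\le\beta_G$ if and only if $\{A\neq\hat A\}\cap\{s:\beta\sqrt{a(s)}>\sqrt{2\log 2}\}$ is null. (The case $\beta=0$ is trivial, both sides being vacuously true.)

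For \eqref{lem:beta_G.2}, assume $\beta\le\beta_G$ and fix a connected component $\mathcal I$ of $\{A\neq\hat A\}$. By Lemma~\ref{lem:prop of concave hull}\eqref{lem:prop of concave hull.2}, $\hat a$ is equal to the positive constant $\hat a_{\mathcal I}$ on $\mathrm{int}(\mathcal I)$ and $\int_{\mathcal I}a(s)\dd{s}=\hat a_{\mathcal I}\abs{\mathcal I}$, so $\hat a_{\mathcal I}$ is the average of $a$ over $\mathcal I$. By part~\eqref{lem:beta_G.1}, the hypothesis $\beta\le\beta_G$ gives $\beta\sqrt{a(s)}\le\sqrt{2\log 2}$, i.e.\ $a(s)\le 2\log 2/\beta^2$, for a.e.\ $s\in\{A\neq\hat A\}$ and in particular for a.e.\ $s\in\mathcal I$. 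Integrating this bound over $\mathcal I$ and dividing by $\abs{\mathcal I}$,
\[
\hat a_{\mathcal I}=\frac{1}{\abs{\mathcal I}}\int_{\mathcal I}a(s)\dd{s}\le\frac{2\log 2}{\beta^2},
\qquad\text{and hence}\qquad
\beta\sqrt{\hat a_{\mathcal I}}\le\sqrt{2\log 2},
\]
which is the claim. (When $\beta=0$ the conclusion is immediate.)

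I do not expect a genuine obstacle here: the argument is entirely routine. The only point that calls for a little care is the strict-versus-non-strict bookkeeping in part~\eqref{lem:beta_G.1}, namely checking that ``$\esssup h\le c$'' matches ``$\{h>c\}$ is null'' rather than ``$\{h\ge c\}$ is null''; this is precisely what forces the strict inequality $\beta\sqrt{a}>\sqrt{2\log 2}$ in the statement.
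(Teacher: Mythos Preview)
Your proposal is correct and follows essentially the same approach as the paper: part~\eqref{lem:beta_G.1} is unpacked from the definition of $\beta_G$ via the characterization of the essential supremum, and part~\eqref{lem:beta_G.2} is obtained by combining part~\eqref{lem:beta_G.1} with the averaging identity $\hat a_{\mathcal I}\abs{\mathcal I}=\int_{\mathcal I}a(s)\dd s$ from Lemma~\ref{lem:prop of concave hull}\eqref{lem:prop of concave hull.2}. The only cosmetic difference is that the paper phrases part~\eqref{lem:beta_G.2} as a proof by contradiction, whereas you integrate the pointwise bound directly; the content is the same.
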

\begin{proof}
We start with the proof of \eqref{lem:beta_G.1}. By the definition of $\beta_G$, $\beta\leq \beta_G$ is true if and only if almost every $s\in\{A\neq \hat{A}\}$,
\[\beta \sqrt{a(s)} \leq \sqrt{2\log 2}.\]
This immediately implies that the set $\beta\leq \beta_G$ if and only if $\{A\neq\hat{A}\}\cap\{s\in [0,1]:\beta\sqrt{a(s)} > \sqrt{2\log 2}\}$ is of measure zero.

We proceed to the proof of \eqref{lem:beta_G.2}, and our strategy is to prove it by contradiction. Suppose that there exists a connected component $\mathcal{I}$ of $\{A\neq \hat{A}\}$ such that
\begin{align}
\beta \sqrt{\hat{a}_{\mathcal{I}}}> \sqrt{2\log 2}. \label{eq:also bad.3}
\end{align} 
Then,
\begin{align*}
\int_\mathcal{I} \hat{a}_\mathcal{I} \dd{s}
&> \int_\mathcal{I} \frac{2\log 2}{\beta^2} \dd{s} && \text{(By \eqref{eq:also bad.3} and the fact that $\abs{\mathcal{I}}>0$)} \\
&\geq \int_\mathcal{I} a(s) \dd{s} && \text{(By \eqref{lem:beta_G.1} and the assumption that $\beta\leq\beta_G$)} \\
&= \int_\mathcal{I} \hat{a}_\mathcal{I} \dd{s}, && \text{(By \eqref{lem:prop of concave hull.2} of Lemma~\ref{lem:prop of concave hull})}
\end{align*}
which yields a contradiction.
\end{proof}

The third lemma compares the difference between two integrals, one using $a$ and the other using $\hat{a}$.

\begin{lemma}
\label{lem:compare}
Recall that the derivative of $f$ equals
\begin{align}
f'(x) 
=
\begin{cases}
\displaystyle
x, & x < \sqrt{2\log 2} \\
\\
\sqrt{2\log 2}, & x \geq \sqrt{2\log 2}.
\end{cases}
\label{eq:f'}
\end{align} 
\begin{enumerate}[(i)]
\item \label{lem:compare.1} Suppose that $\mathcal{I}$ is a connected component of $\{A\neq \hat{A}\}$. Then for all $\beta\geq 0$,
\begin{align*}
\int_\mathcal{I} \left(f'(\beta \sqrt{\hat{a}(s)}) \sqrt{\hat{a}(s)} - f'(\beta \sqrt{a(s)}) \sqrt{a(s)}\right) \dd{s} \geq 0.
\end{align*}
\item \label{lem:compare.2} Moreover, if $\beta>\beta_G$, there exists a connected component $\mathcal{I}$ of $\{A\neq \hat{A}\}$ such that 
\begin{align*}
\int_\mathcal{I} \left(f'(\beta \sqrt{\hat{a}(s)}) \sqrt{\hat{a}(s)} - f'(\beta \sqrt{a(s)}) \sqrt{a(s)}\right) \dd{s} > 0.
\end{align*}
\end{enumerate}
\end{lemma}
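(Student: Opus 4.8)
The plan is to prove both parts by exploiting the structure of $\hat a$ on a connected component $\mathcal I$ of $\{A\neq\hat A\}$ provided by Lemma~\ref{lem:prop of concave hull}. By part~\eqref{lem:prop of concave hull.2} of that lemma, $\hat a$ equals a positive constant $\hat a_{\mathcal I}$ on $\mathrm{int}(\mathcal I)$, and $\int_{\mathcal I} a(s)\dd{s} = \hat a_{\mathcal I}\abs{\mathcal I}$. Write $g(x) \coloneqq f'(\beta\sqrt{x})\sqrt{x}$ for $x\geq 0$; the claim of part~\eqref{lem:compare.1} is then $\int_{\mathcal I}\bigl(g(\hat a(s)) - g(a(s))\bigr)\dd{s}\geq 0$, i.e.\ $g(\hat a_{\mathcal I})\abs{\mathcal I}\geq \int_{\mathcal I} g(a(s))\dd{s}$. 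Since $\int_{\mathcal I} a(s)\dd{s} = \hat a_{\mathcal I}\abs{\mathcal I}$, this is precisely the statement that $\frac{1}{\abs{\mathcal I}}\int_{\mathcal I} g(a(s))\dd{s} \leq g\bigl(\frac{1}{\abs{\mathcal I}}\int_{\mathcal I} a(s)\dd{s}\bigr)$, which would follow from \emph{concavity of $g$} on $[0,\infty)$ via Jensen's inequality. So the first step is to check that $g(x) = f'(\beta\sqrt x)\sqrt x$ is concave in $x\geq 0$. Using the explicit form \eqref{eq:f'}: for $\beta\sqrt x < \sqrt{2\log 2}$ one has $g(x) = \beta x$, which is linear; for $\beta\sqrt x \geq \sqrt{2\log 2}$ one has $g(x) = \sqrt{2\log 2}\,\sqrt x$, which is concave; and at the junction point $x_\beta = 2\log2/\beta^2$ both pieces take the value $\sqrt{2\log 2}\,\sqrt{x_\beta} = 2\log2/\beta$, so $g$ is continuous there, and the left derivative $\beta$ exceeds the right derivative $\tfrac12\sqrt{2\log2}/\sqrt{x_\beta} = \beta/2$, so $g$ is concave across the junction as well. (If $\beta = 0$, $g\equiv 0$ and the statement is trivial.) This establishes part~\eqref{lem:compare.1}.

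For part~\eqref{lem:compare.2}, I need a component $\mathcal I$ on which the Jensen inequality above is \emph{strict}. Since $\beta > \beta_G$, by Lemma~\ref{lem:beta_G}\eqref{lem:beta_G.1} the set $\{A\neq\hat A\}\cap\{s:\beta\sqrt{a(s)} > \sqrt{2\log 2}\}$ has positive measure; hence there is a connected component $\mathcal I$ of $\{A\neq\hat A\}$ on which $\beta\sqrt{a(s)} > \sqrt{2\log 2}$ holds on a subset of positive measure. On that component I then want to argue strict inequality in Jensen. Here the cleanest route is: strict inequality in Jensen for a concave $g$ and a non-constant argument holds provided $g$ is \emph{strictly} concave on the relevant range, or more carefully, provided $a(\cdot)$ is not a.e.\ constant on $\mathcal I$ \emph{and} $g$ is not affine on the convex hull of the essential range of $a|_{\mathcal I}$. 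The function $g$ fails to be strictly concave precisely on the linear piece $\{x : \beta\sqrt x \le \sqrt{2\log2}\}$; on the piece $\{x: \beta\sqrt x > \sqrt{2\log2}\}$ it is strictly concave. Since $a(s) > 2\log2/\beta^2$ on a positive-measure subset of $\mathcal I$, while $\hat a_{\mathcal I} = \frac1{\abs{\mathcal I}}\int_{\mathcal I} a \le$ (something that may or may not exceed $2\log2/\beta^2$) — I will need to handle the mean carefully. The robust argument: by strict concavity on the upper piece together with the fact that $a|_{\mathcal I}$ genuinely varies (it exceeds $2\log2/\beta^2$ on a positive-measure set; if it were a.e.\ equal to a constant $c$ then $c = \hat a_{\mathcal I}$ and $A=\hat A$ on $\mathcal I$, contradiction), one gets that $\int_{\mathcal I} g(a(s))\dd s < g(\hat a_{\mathcal I})\abs{\mathcal I}$ unless $g$ is affine on an interval containing the essential range of $a|_{\mathcal I}$; but affineness of $g$ forces $\beta\sqrt{a(s)}\le\sqrt{2\log2}$ a.e.\ on $\mathcal I$, contradicting the choice of $\mathcal I$. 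Thus the inequality is strict on this $\mathcal I$, proving part~\eqref{lem:compare.2}.

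The main obstacle I anticipate is the strictness bookkeeping in part~\eqref{lem:compare.2}: one must carefully separate the case where the essential range of $a|_{\mathcal I}$ straddles the kink $x_\beta = 2\log2/\beta^2$ (so $g$ is strictly concave on part of the range and Jensen is automatically strict) from the case where $a|_{\mathcal I} \ge x_\beta$ a.e.\ (so $g$ is strictly concave on the whole range, and strictness comes from non-constancy of $a|_{\mathcal I}$), while ruling out the degenerate case $a|_{\mathcal I}\le x_\beta$ a.e.\ using the defining property of $\mathcal I$ and Lemma~\ref{lem:beta_G}\eqref{lem:beta_G.1}. A clean way to package all three cases uniformly is to invoke the standard fact that for a concave function $g$ and an integrable $\varphi$ on a probability space with $g$ \emph{not} affine on any neighborhood of the essential range of $\varphi$, Jensen's inequality $\E[g(\varphi)]\le g(\E\varphi)$ is strict whenever $\varphi$ is non-a.e.-constant; verifying the hypothesis then reduces to the elementary observations above about where $g$ is affine. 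Everything else — computing $f'$, the concavity check, the use of $\int_{\mathcal I} a = \hat a_{\mathcal I}\abs{\mathcal I}$ — is routine.
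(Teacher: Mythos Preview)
Your proof is correct and takes a genuinely different route from the paper. The paper argues part~\eqref{lem:compare.1} by a direct case split on whether $\beta\sqrt{\hat a_{\mathcal I}}$ lies below or above $\sqrt{2\log 2}$: in the first case it uses $f'(x)\le x$ together with $\int_{\mathcal I} a = \hat a_{\mathcal I}\,|\mathcal I|$ from Lemma~\ref{lem:prop of concave hull}\eqref{lem:prop of concave hull.2}; in the second it uses $f'(x)\le\sqrt{2\log 2}$ together with the strict inequality $\int_{\mathcal I}\sqrt a < \int_{\mathcal I}\sqrt{\hat a}$ from Lemma~\ref{lem:prop of concave hull}\eqref{lem:prop of concave hull.3}, which already yields strictness for that case in part~\eqref{lem:compare.2}. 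Part~\eqref{lem:compare.2} is then finished, in the remaining case $\beta\sqrt{\hat a_{\mathcal I}}<\sqrt{2\log 2}$, by exhibiting directly a positive integrand $\beta a(s)-\sqrt{2\log 2}\sqrt{a(s)}$ on the positive-measure set $\{s\in\mathcal I:\beta\sqrt{a(s)}>\sqrt{2\log 2}\}$. Your approach instead packages both cases into a single Jensen inequality for the concave function $g(x)=f'(\beta\sqrt x)\sqrt x$, using only $\hat a_{\mathcal I}=\frac{1}{|\mathcal I|}\int_{\mathcal I} a$; the case split reappears only implicitly in the concavity check of $g$ and in the equality analysis. This is more conceptual and bypasses Lemma~\ref{lem:prop of concave hull}\eqref{lem:prop of concave hull.3} altogether. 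The paper's version, by contrast, stays at the level of the two elementary bounds $f'(x)\le x$ and $f'(x)\le\sqrt{2\log 2}$, so the strictness in part~\eqref{lem:compare.2} is read off from an explicit positive integrand rather than from the equality case of Jensen. Your handling of that equality case is correct: $g$ is affine exactly on $[0,2\log 2/\beta^2]$, so equality would force $a\le 2\log 2/\beta^2$ a.e.\ on $\mathcal I$, contradicting the choice of $\mathcal I$ via Lemma~\ref{lem:beta_G}\eqref{lem:beta_G.1}; and the non-constancy of $a|_{\mathcal I}$ (needed to rule out the degenerate single-point range) follows as you say from $\int_{\mathcal I} a=\hat a_{\mathcal I}|\mathcal I|$ and the definition of $\mathcal I$.
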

\begin{proof}
We prove this lemma by addressing each point separately. 

\noindent Proof of \eqref{lem:compare.1}. Let $\mathcal{I}$ be a connected component of $\{A\neq \hat{A}\}$. By \eqref{lem:prop of concave hull.2} of Lemma~\ref{lem:prop of concave hull}, $\hat{a}$ is equal to a positive constant $\hat{a}_\mathcal{I}$ on $\text{int}(\mathcal{I})$. We now distinguish the two cases of $\hat{a}_{\mathcal{I}}$.

\noindent \textbf{Case 1: $\beta\sqrt{\hat{a}_\mathcal{I}} < \sqrt{2\log 2}$.} We have
\begin{align*}
& \int_\mathcal{I} \left(f'(\beta \sqrt{\hat{a}(s)}) \sqrt{\hat{a}(s)} - f'(\beta \sqrt{a(s)}) \sqrt{a(s)}\right) \dd{s}  \\
&= \int_\mathcal{I} \left( \beta \hat{a}_{\mathcal{I}} - f'(\beta \sqrt{a(s)}) \sqrt{a(s)}\right) \dd{s} && \text{(By \eqref{eq:f'})} \\
&= \int_\mathcal{I} \left(\beta a(s) - f'(\beta \sqrt{a(s)}) \sqrt{a(s)}\right) \dd{s} && \text{(By \eqref{lem:prop of concave hull.2} of Lemma~\ref{lem:prop of concave hull})} \\
&\geq \int_\mathcal{I} \underbrace{(\beta a(s) - \beta a(s))}_{=0} \dd{s} && \text{(Because $f'(x)\leq x$ for all $x\geq 0$)} \\
&= 0.
\end{align*}

\noindent \textbf{Case 2: $\beta\sqrt{\hat{a}_\mathcal{I}} \geq \sqrt{2\log 2}$.} We have 
\begin{align*}
& \int_\mathcal{I} \left(\sqrt{2\log 2} \sqrt{\hat{a}(s)} - f'(\beta \sqrt{a(s)}) \sqrt{a(s)}\right) \dd{s} \\
&> \int_\mathcal{I} \left(\sqrt{2\log 2} \sqrt{a(s)} - f'(\beta \sqrt{a(s)}) \sqrt{a(s)}\right) \dd{s} && \text{(By \eqref{lem:prop of concave hull.3} of Lemma~\ref{lem:prop of concave hull})} \\
&\geq \int_\mathcal{I} \underbrace{\left(\sqrt{2\log 2} \sqrt{a(s)} - \sqrt{2\log 2} \sqrt{a(s)}\right)}_{=0} \dd{s} && \text{(Because $f'(x)\leq \sqrt{2\log 2}$ for all $x\geq 0$)} \\
&=0.
\end{align*} 
Proof of \eqref{lem:compare.2}. Suppose that $\beta>\beta_G$. We distinguish again the two cases of $\hat{a}_\mathcal{I}$.

\noindent \textbf{Case 1: $\beta\sqrt{\hat{a}_\mathcal{I}} < \sqrt{2\log 2}$.} By Lemma~\ref{lem:beta_G}, there exists a connected component $\mathcal{I}$ of $\{A\neq \hat{A}\}$ such that 
\begin{align}
\abs{\mathcal{I}\cap \{\beta\sqrt{a(s)} > \sqrt{2\log 2}\}}>0. \label{eq:positive measure}
\end{align}
Then we have
\begin{align*}
& \int_\mathcal{I} \left(f'(\beta \sqrt{\hat{a}(s)}) \sqrt{\hat{a}(s)} - f'(\beta \sqrt{a(s)}) \sqrt{a(s)}\right) \dd{s}  \\
&= \int_\mathcal{I} \left( \beta \hat{a}_{\mathcal{I}} - f'(\beta \sqrt{a(s)}) \sqrt{a(s)}\right) \dd{s} && \text{(By \eqref{eq:f'})} \\
&= \int_\mathcal{I} \left(\beta a(s) - f'(\beta \sqrt{a(s)}) \sqrt{a(s)}\right) \dd{s} && \text{(By \eqref{lem:prop of concave hull.2} of Lemma~\ref{lem:prop of concave hull})} \\
&= \int_{\mathcal{I}\cap\{\beta \sqrt{a(s)} \leq \sqrt{2\log 2}\}} \underbrace{(\beta a(s) - \beta a(s))}_{=0} \dd{s} \\
&+ \int_{\mathcal{I}\cap\{\beta \sqrt{a(s)} > \sqrt{2\log 2}\}} \underbrace{(\beta a(s) - \sqrt{2\log 2}\sqrt{a(s)})}_{> 0} \dd{s} >0. && \text{(By \eqref{eq:positive measure})} \\
\end{align*}

\noindent \textbf{Case 2: $\beta\sqrt{\hat{a}_\mathcal{I}} \geq \sqrt{2\log 2}$.}
In this case, as shown in Case 2 in the proof of \eqref{lem:compare.1},
for any connected component $\mathcal{I}$ of $\{A\neq\hat{A}\}$, 
\begin{align*}
& \int_\mathcal{I} \left(f'(\beta \sqrt{\hat{a}(s)}) \sqrt{\hat{a}(s)} - f'(\beta \sqrt{a(s)}) \sqrt{a(s)}\right) \dd{s} > 0.
\end{align*}
This completes the proof.
\end{proof}

We are now ready to prove Proposition~\ref{prop:FE comparison}.

\begin{proof}[Proof of Proposition~\ref{prop:FE comparison}]
In the following, let $\{\mathcal{I}_i\}_{i=1}^\infty$ be the collection of the connected components of $\{A\neq \hat{A}\}$. 

We start with the proof of \eqref{prop:FE comparison.1}. Assume that $\beta\leq \beta_G$. We have
\begin{align*}
G_\beta 
&= \int_0^1 \left( f(\beta \sqrt{\hat{a}(s)}) - f(\beta \sqrt{a(s)}) \right) \dd{s} \\
&= \sum_{i=1}^\infty \int_{\mathcal{I}_i} \left( f(\beta \sqrt{\hat{a}_{\mathcal{I}_i}}) - f(\beta \sqrt{a(s)}) \right) \dd{s} && \text{(By Lemma~\ref{lem:prop of concave hull})}\\
&= \sum_{i=1}^\infty \int_{\mathcal{I}_i} \left( \frac{\beta^2}{2}\hat{a}_{\mathcal{I}_i} - \frac{\beta^2}{2}a(s) \right) \dd{s} && \text{(By \eqref{lem:beta_G.1} and \eqref{lem:beta_G.2} of Lemma~\ref{lem:beta_G})} \\
&= 0, && \text{(By \eqref{lem:prop of concave hull.2} of Lemma~\ref{lem:prop of concave hull})}
\end{align*}
and the proof of \eqref{prop:FE comparison.1} is completed.

We now proceed to the proof of \eqref{prop:FE comparison.2}. Assume that $\beta > \beta_G$. Differentiating $G_\beta$ yields
\begin{align}
G_\beta'
= \int_0^1 \left(f'(\beta \sqrt{\hat{a}(s)}) \sqrt{\hat{a}(s)} - f'(\beta \sqrt{a(s)}) \sqrt{a(s)}\right) \dd{s}. 
\end{align}
Again by Lemma~\ref{lem:prop of concave hull}, $G'_\beta$ satisfies the following
\begin{align}
&\int_0^1 \left(f'(\beta \sqrt{\hat{a}(s)}) \sqrt{\hat{a}(s)} - f'(\beta \sqrt{a(s)}) \sqrt{a(s)}\right) \dd{s} \nonumber \\
&= \sum_{i=1}^\infty \int_{\mathcal{I}_i} \left(f'(\beta \sqrt{\hat{a}_{\mathcal{I}_i}}) \sqrt{\hat{a}_{\mathcal{I}_i}} - f'(\beta \sqrt{a(s)}) \sqrt{a(s)}\right) \dd{s} > 0, \label{eq:g' integral two terms}
\end{align}
where \eqref{eq:g' integral two terms} is true because of Lemma~\ref{lem:compare} and the assumption that $\beta > \beta_G$. This proves \eqref{prop:FE comparison.2}.
\end{proof}

\section{Proof of Lemma~\ref{lem:sandwich}} \label{ann:proof of lem:sandwich}

This section is devoted to the proof of Lemma~\ref{lem:sandwich}, and the strategy is to compare the free energy of the CREM with the free energy of the branching random walk, which is defined as follows. 
\paragraph{The branching random walk.} Let $(\tilde{X}_u)_{u\in\T_M}$ be a centered Gaussian process indexed by $\T_M$ with the covariance function
\begin{align*}
\EX{\tilde{X}_u\tilde{X}_w} = \abs{u\wedge w}
\end{align*}
for all $u,w\in\T_M$. This Gaussian process is called the branching random walk with standard Gaussian increments, which will be abbreviated as the branching random walk. Define
\begin{align*}
f_M(\beta) = \frac{1}{M} \EX{\log \tilde{Z}_{\beta,M}}, \quad \text{where} \quad \tilde{Z}_{\beta,M} = \sum_{\abs{u}=M} e^{\beta \tilde{X}_u}.
\end{align*}
It is known that (see, \cite{chauvinBoltzmannGibbsWeightsBranching1997}) the function $\eqref{eq:BRW Fe}$ is the pointwise limit of $f_M(\beta)$, i.e., for all $\beta\in [0,\infty)$,
\begin{align*}
f = \lim_{M\rightarrow\infty} f_M(\beta).
\end{align*}
The proof of Lemma~\ref{lem:sandwich} relies on a quantitative estimate of the convergence above, which is stated in detail in Lemma~\ref{lem:uniform}. Before we proceed to Lemma~\ref{lem:uniform}, we state the following lemma that is handy to prove Lemma~\ref{lem:uniform}.

\begin{lemma}
\label{lem:prop of g_M}
Define $g_M:[0,\infty)\rightarrow\R$ as $g_M(0) \coloneqq 0$ and $g_M(\beta) \coloneqq f_M(\beta)/\beta - 2\log 2/\beta$. Define $g(\beta) \coloneqq f(\beta)/\beta - 2\log 2/\beta$ which equals
\begin{align*}
g(\beta)\coloneqq
\begin{cases}
\displaystyle
\frac{\beta}{2}, & \beta \in [0,\sqrt{2\log 2}] \\
\displaystyle
\sqrt{2\log 2} - \frac{\log 2}{\beta}, & \beta > \sqrt{2\log 2}
\end{cases}
\end{align*}
Then, the following statements are true.
\begin{enumerate}[(i)]
\item For all $\beta\in [0,\infty)$, $\lim_{M\rightarrow\infty} g_M(\beta)=g(\beta)$. \label{g_M -> g}
\item For all $\beta\in [0,\infty)$ and $M\in\N$, $g_M(\beta)\leq g(\beta)$. \label{g_M small}
\item For all $M\in\N$, the function $g_M$ is non-decreasing. Moreover, $g_M(\infty)\coloneqq \lim_{\beta\rightarrow\infty} g_M(\beta)$ exists and $\lim_{M\rightarrow\infty} g_M(\infty) = \sqrt{2\log 2} = g(\infty)$, where $g(\infty)\coloneqq \lim_{\beta\rightarrow\infty} g(\beta)$.  \label{g_M limit exists}
\item The sequence of functions $g_M$ converges uniformly to $g$. \label{g_M -> g uniformly}
\end{enumerate}
\end{lemma}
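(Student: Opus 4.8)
The plan is to work throughout with the representation $g_M(\beta)=\frac{1}{M\beta}\EX{\log\left(2^{-M}\sum_{\abs{u}=M}e^{\beta\tilde X_u}\right)}$ for $\beta>0$ (and $g_M(0)=0$), and correspondingly $g(\beta)=\frac{1}{\beta}(f(\beta)-\log 2)$. With this identity in hand, part \eqref{g_M -> g} is immediate: for $\beta>0$ it is merely a rewriting of the cited pointwise convergence $f_M(\beta)\to f(\beta)$, and it holds at $\beta=0$ by definition.

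For \eqref{g_M small} I would show that $M\mapsto\EX{\log\tilde Z_{\beta,M}}$ is superadditive, so that Fekete's lemma gives $f(\beta)=\lim_M f_M(\beta)=\sup_M f_M(\beta)$, whence $f_M\le f$ and $g_M\le g$. The superadditivity is a standard consequence of the branching property of the branching random walk: writing $\tilde Z_{\beta,M+M'}=\tilde Z_{\beta,M}\sum_{\abs{u}=M}p_u\tilde Z^u_{\beta,M'}$ with weights $p_u=e^{\beta\tilde X_u}/\tilde Z_{\beta,M}$ and with i.i.d.\ subtree partition functions $\tilde Z^u_{\beta,M'}$ that are independent of the generation-$M$ filtration, concavity of $\log$ gives $\log\tilde Z_{\beta,M+M'}\ge\log\tilde Z_{\beta,M}+\sum_u p_u\log\tilde Z^u_{\beta,M'}$ pathwise; conditioning on generation $M$ and taking expectations yields $\EX{\log\tilde Z_{\beta,M+M'}}\ge\EX{\log\tilde Z_{\beta,M}}+\EX{\log\tilde Z_{\beta,M'}}$. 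All expectations here are finite since $\beta\tilde X_{u_0}\le\log\tilde Z_{\beta,M}$ for any fixed leaf $u_0$ and $\EX{\log\tilde Z_{\beta,M}}\le\log\EX{\tilde Z_{\beta,M}}<\infty$.

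For \eqref{g_M limit exists}: for a fixed realization of $(\tilde X_u)_{\abs{u}=M}$, the map $\beta\mapsto\left(2^{-M}\sum_{\abs{u}=M}(e^{\tilde X_u})^\beta\right)^{1/\beta}$ is non-decreasing in $\beta>0$, being the $L^\beta$-norm of $u\mapsto e^{\tilde X_u}$ with respect to the uniform probability measure on the $2^M$ leaves; hence $\beta\mapsto\frac1\beta\log(2^{-M}\sum_u e^{\beta\tilde X_u})$, and therefore $g_M$, is non-decreasing. Since $0\le g_M\le g\le\sqrt{2\log 2}$ by \eqref{g_M small}, the limit $g_M(\infty)=\sup_\beta g_M(\beta)$ exists in $[0,\sqrt{2\log 2}]$. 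Moreover, for each fixed $\beta$ one has $\liminf_M g_M(\infty)\ge\lim_M g_M(\beta)=g(\beta)$, so taking the supremum over $\beta$ gives $\liminf_M g_M(\infty)\ge\sqrt{2\log 2}$; combined with $g_M(\infty)\le\sqrt{2\log 2}$ this gives $g_M(\infty)\to\sqrt{2\log 2}=g(\infty)$. (One may also identify $g_M(\infty)=\frac1M\EX{\max_{\abs{u}=M}\tilde X_u}$ by monotone convergence, but this is not needed.)

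For \eqref{g_M -> g uniformly} I would run a Pólya-type argument. Given $\eps>0$, continuity, monotonicity and boundedness of $g$ let us choose $0=\beta_0<\beta_1<\dots<\beta_r$ with $g(\beta_{j+1})-g(\beta_j)<\eps$ for $j<r$ and $\sqrt{2\log 2}-g(\beta_r)<\eps$; by \eqref{g_M -> g} choose $M_0$ with $\abs{g_M(\beta_j)-g(\beta_j)}<\eps$ for all $j\le r$ and $M\ge M_0$. On each interval $[\beta_j,\beta_{j+1}]$ monotonicity of $g_M$ and of $g$ confines both to $[g(\beta_j)-\eps,\,g(\beta_{j+1})+\eps]$, of length at most $3\eps$; on $[\beta_r,\infty)$ monotonicity together with $g_M\le g\le\sqrt{2\log 2}$ (from \eqref{g_M small}) confines both $g_M$ and $g$ to $[\sqrt{2\log 2}-2\eps,\sqrt{2\log 2}]$. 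Hence $\sup_{\beta\ge0}\abs{g_M(\beta)-g(\beta)}\le 3\eps$ for $M\ge M_0$, which is \eqref{g_M -> g uniformly}. The main obstacle is precisely this last step: the domain $[0,\infty)$ is noncompact, so Dini's theorem is not directly available, and one must truncate at a large $\beta_r$ and control the tail — which is exactly where the $\beta$-monotonicity and the bound $g_M\le g$ from \eqref{g_M limit exists} and \eqref{g_M small} are used; the remaining steps are routine once the leaf-average representation of $g_M$ is fixed.
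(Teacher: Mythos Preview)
Your proof is correct and follows essentially the same route as the paper: superadditivity plus Fekete for \eqref{g_M small}, $L^p$-norm monotonicity for \eqref{g_M limit exists}, and a P\'olya/Dini-type subdivision argument for \eqref{g_M -> g uniformly}. The one noteworthy difference is in \eqref{g_M limit exists}: the paper explicitly identifies $g_M(\infty)=\frac{1}{M}\EX{\max_{\abs{u}=M}\tilde X_u}$ and invokes the known first-order asymptotics of the maximum to conclude $g_M(\infty)\to\sqrt{2\log 2}$, whereas your soft argument ($\liminf_M g_M(\infty)\ge\lim_M g_M(\beta)=g(\beta)$ for every $\beta$, then take $\sup_\beta$) is more self-contained and avoids that external input. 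Correspondingly, in \eqref{g_M -> g uniformly} the paper places the last subdivision point at $\beta_k=\infty$ and uses the convergence $g_M(\infty)\to g(\infty)$ from \eqref{g_M limit exists}, while you truncate at a finite $\beta_r$ and control the tail directly via $g_M\le g\le\sqrt{2\log 2}$ from \eqref{g_M small}; both work, and yours has the minor advantage of not actually needing the second half of \eqref{g_M limit exists}.
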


\begin{remark}
As we will see below, the proof of \eqref{g_M small} in Lemma~\ref{lem:prop of g_M} is a standard argument in the context of statistical physics. The argument to prove \eqref{g_M -> g uniformly} is a slight modification of the proof of Dini's second theorem\footnote{This simple but handy result appears in some French textbooks under the name ``deuxi\`{e}me th\'{e}or\`{e}me de Dini''. One can find the proof in Solution 127 in Part II, Chapter 3 of \cite{polyaProblemsTheoremsAnalysis1972}.} which states that if a sequence of monotone (continuous or discontinuous) functions converges on a closed interval to a continuous function, the sequence converges uniformly. The second statement of \eqref{g_M limit exists} allows us to generalize Dini's second theorem to our setting.
\end{remark}

\begin{proof}[Proof of Lemma~\ref{lem:prop of g_M}]
We prove this lemma by addressing each point separately.

\noindent Proof of \eqref{g_M -> g}. This follows directly from the definition of $g_M$ and $g$, and the pointwise convergence of $f_M$ to $f$.

\noindent Proof of \eqref{g_M small}. It suffices to show that for all $\beta\in [0,\infty)$ and $M\in\N$, $f_M(\beta)\leq f(\beta)$. To this purpose, we claim that for all $\beta\in [0,\infty)$ the sequence $Mf_M$ is super-additive. If this is true, then Fekete's lemma implies that $f_M(\beta) \leq f(\beta)$.

Now, fixing $M_1,M_2\in\N$, we have 
\begin{align*}
\EX{\log \tilde{Z}_{\beta,M_1+M_2}}
&= \EX{\log \sum_{\abs{u}=M_1+M_2} e^{\beta \tilde{X}_{u}}} \\
&= \EX{\log \sum_{\abs{u_1}=M_1}\sum_{\abs{u_2}=M_2} e^{\beta (\tilde{X}_{u_1}+\tilde{X}_{u_2}^{u_1})}} \\
&= \EX{\log \sum_{\abs{u_1}=M_1}e^{\beta \tilde{X}_{u_1}} \tilde{Z}_{\beta,M_2}^{u_1}} \\
&= \EX{\log \tilde{Z}_{\beta,M_1}} + \EX{\log \sum_{\abs{u_1}=M_1}\frac{e^{\beta X_{u_1}}}{\tilde{Z}_{\beta,M_1}} \tilde{Z}_{\beta,M_2}^{u_1}}.
\end{align*}
By Jensen's inequality and the branching property, we have 
\begin{align*}
\EX{\log \sum_{\abs{u_1}=M_1}\frac{e^{\beta X_{u_1}}}{\tilde{Z}_{\beta,M_1}} \tilde{Z}_{\beta,M_2}^{u_1}} \geq \EX{ \sum_{\abs{u_1}=M_1}\frac{e^{\beta X_{u_1}}}{\tilde{Z}_{\beta,M_1}} \log \tilde{Z}_{\beta,M_2}^{u_1}} = \EX{\log \tilde{Z}_{\beta,M_2}}.
\end{align*}
Therefore, we conclude that 
\begin{align*}
(M_1+M_2)g_{M_1+M_2} 
&= \EX{\log \tilde{Z}_{\beta,M_1+M_2}} \\
&\geq \EX{\log \tilde{Z}_{\beta,M_1}} + \EX{\log \tilde{Z}_{\beta,M_2}}
= M_1 g_{M_1} + M_2 g_{M_2}.
\end{align*}

\noindent Proof of \eqref{g_M limit exists}. Fix $M\in\N$. For all $\beta>0$, by Jensen's inequality and the fact that $x\mapsto \log x$ is concave,
\begin{align*}
\log \sum_{\abs{u}=M} \frac{1}{2^M} e^{\beta \tilde{X}_u} 
\geq \sum_{\abs{u}=M} \frac{1}{2^M} \log e^{\beta \tilde{X}_u}
= \sum_{\abs{u}=M} \frac{1}{2^M} \beta \tilde{X}_u.
\end{align*}
Thus, by the fact that $(\tilde{X}_u)_{\abs{u}=M}$ is centered,
\begin{align*}
g_M(\beta)
\geq \frac{1}{\beta M}\EX{\log \sum_{\abs{u}=M} \frac{1}{2^M} e^{\beta \tilde{X}_u}}
\geq \EX{\sum_{\abs{u}=M} \frac{1}{2^M} \beta \tilde{X}_u}
= 0 = g(0).
\end{align*}
For all $0<\beta < \beta'$, the function $x\mapsto x^{\beta'/\beta}$ is convex. Therefore, by Jensen's inequality,
\begin{align*}
\left(\sum_{\abs{u}=M}\frac{1}{2^M} e^{\beta \tilde{X}_u}\right)^{\beta'/\beta} \leq \sum_{\abs{u}=M}\frac{1}{2^M} e^{\beta' \tilde{X}_u}.
\end{align*}
It then yields immediately that
\begin{align*}
g_M(\beta) 
= \frac{1}{\beta M} \EX{\log \left(\sum_{\abs{u}=M}\frac{1}{2^M} e^{\beta \tilde{X}_u}\right)} 
\leq \frac{1}{\beta' M} \EX{\log \left(\sum_{\abs{u}=M}\frac{1}{2^M} e^{\beta' \tilde{X}_u}\right) }
= g_M(\beta'),
\end{align*}
which proves that $g_M$ is non-decreasing. Now, by \eqref{g_M small} and the fact that $g\leq \sqrt{2\log 2}$, monotone convergence theorem implies that $\lim_{\beta\rightarrow\infty} g_M(\beta)$ exists and is bounded from above by $\sqrt{2\log 2}$. Finally, note that 
\begin{align*}
\frac{1}{M}\EX{\max_{\abs{u}=M}X_u} - \frac{\log 2}{\beta} \leq g_M(\beta) \leq \frac{1}{M}\EX{\max_{\abs{u}=M}X_u}.
\end{align*}
Taking $\beta\rightarrow\infty$, we obtain the equality
\begin{align*}
g_M(\infty) = \frac{1}{M}\EX{\max_{\abs{u}=M}X_u}.
\end{align*}
It is well-known that
\begin{align*}
\lim_{M\rightarrow\infty}\frac{1}{M}\EX{\max_{\abs{u}=M}X_u} = \sqrt{2\log 2},
\end{align*}
which is an implication of Theorem 3.1 in \cite{CREM04} by letting the covariance function to be the identity function. Therefore, taking $M\rightarrow\infty$, we conclude that
\begin{align*}
\lim_{M\rightarrow\infty} g_M(\infty) = \sqrt{2\log 2} = \lim_{\beta\rightarrow\infty} g(\beta) = g(\infty).
\end{align*}

\noindent Proof of \eqref{g_M -> g uniformly}. Fix $\varepsilon>0$. By its definition, the function $g$ is continuous and non-decreasing on $[0,\infty)$. Moreover, by \eqref{g_M limit exists}, $g(\infty)\coloneqq \lim_{\beta\rightarrow\infty} g(\beta)$ and $g_M(\infty)\coloneqq \lim_{\beta\rightarrow\infty} g_M(\beta)$ exist and $\lim_{M\rightarrow\infty} g_M(\infty)=g(\infty)$. By the intermediate value theorem, there exists a subdivision $0=\beta_0<\beta_1<\cdots<\beta_{k-1}<\beta_k=\infty$ such that $g(\beta_{i+1})-g(\beta_i)<\varepsilon$, for all $i=0,\ldots,k-1$. Thus, for all $\beta\in [0,\infty)$ and $i=0,\ldots,k-1$, we have
\begin{align}
g_M(\beta) - g(\beta) 
&\leq g_M(\beta_{i+1}) - g(\beta_i) && \text{(Because $g_M$ and $g$ are non-decreasing)} \nonumber \\
&\leq g_M(\beta_{i+1}) - g(\beta_{i+1}) + \varepsilon && \text{(By the choice of subdivision)} \label{eq:g_M(beta) - g(beta).1} 
\end{align}
and 
\begin{align}
g_M(\beta) - g(\beta) 
&\geq g_M(\beta_{i}) - g(\beta_{i+1}) && \text{(Because $g_M$ and $g$ are non-decreasing)}\nonumber \\
&\geq g_M(\beta_{i}) - g(\beta_{i}) - \varepsilon. && \text{(By the choice of subdivision)}
\label{eq:g_M(beta) - g(beta).2}   
\end{align}
By \eqref{g_M -> g} and \eqref{g_M limit exists}, there exists $M_\varepsilon\in\N$ such that for all $M\geq M_\varepsilon$ and $i=0,\ldots,k-1$,
\begin{align}
\abs{g_M(\beta_i) - g(\beta_i)} < \varepsilon. \label{eq:|g_M(beta_i) - g_M(beta_i)| < varepsilon}
\end{align}
Combining \eqref{eq:g_M(beta) - g(beta).1}, \eqref{eq:g_M(beta) - g(beta).2} and \eqref{eq:|g_M(beta_i) - g_M(beta_i)| < varepsilon}, we conclude that for all $M\geq M_\varepsilon$,
\begin{align*}
\abs{g_M(\beta) - g(\beta)} < 2\varepsilon,
\end{align*}
which proves that $g_M$ converges to $g$ uniformly as, desired.
\end{proof}

Lemma~\ref{lem:prop of g_M} implies the following quantitative convergence of $f_M$.

\begin{lemma}
\label{lem:uniform}
For all $\varepsilon>0$, there exists $M\in\N$ independent of $\beta\in [0,\infty)$ such that
\begin{align}
\abs{f_M(\beta)-f(\beta)} \leq \beta \varepsilon. \label{eq:uniform}
\end{align}
\end{lemma}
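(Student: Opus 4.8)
The plan is to obtain \eqref{eq:uniform} as an immediate corollary of the uniform convergence $g_M \to g$ established in Lemma~\ref{lem:prop of g_M}\eqref{g_M -> g uniformly}. The first step is a one-line algebraic observation: by the very definitions of $g_M$ and $g$, the $\beta$-dependent additive constant appearing in both cancels upon subtraction, so that for every $\beta > 0$
\begin{align*}
g_M(\beta) - g(\beta) = \frac{f_M(\beta) - f(\beta)}{\beta}, \qquad\text{equivalently}\qquad f_M(\beta) - f(\beta) = \beta\bigl(g_M(\beta) - g(\beta)\bigr).
\end{align*}
The case $\beta = 0$ is handled separately and trivially: $f_M(0) = \tfrac{1}{M}\EX{\log \tilde{Z}_{0,M}} = \tfrac{1}{M}\log 2^{M} = \log 2 = f(0)$, so both sides of \eqref{eq:uniform} vanish.

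Given $\varepsilon > 0$, the next step is to invoke Lemma~\ref{lem:prop of g_M}\eqref{g_M -> g uniformly} to produce an $M \in \N$, independent of $\beta$, for which $\sup_{\beta \in [0,\infty)} \abs{g_M(\beta) - g(\beta)} \le \varepsilon$. Substituting this estimate into the identity above then gives, for all $\beta > 0$,
\begin{align*}
\abs{f_M(\beta) - f(\beta)} = \beta\,\abs{g_M(\beta) - g(\beta)} \le \beta\varepsilon,
\end{align*}
and together with the $\beta = 0$ case this is exactly \eqref{eq:uniform}, completing the argument.

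I do not expect any genuine obstacle, since all of the analytic content has been absorbed into Lemma~\ref{lem:prop of g_M}: its part \eqref{g_M -> g uniformly} is the substantive step, a Dini-type argument carried out on the noncompact interval $[0,\infty)$ by exploiting the existence of the endpoint limits $g_M(\infty)$ and the convergence $g_M(\infty) \to g(\infty) = \sqrt{2\log 2}$ from part \eqref{g_M limit exists}. The only thing worth flagging here is structural rather than technical: it is precisely the explicit factor $\beta$ in the displayed identity that converts a bound on $g_M - g$ which is uniform in $\beta$ into the estimate \eqref{eq:uniform}, and this also explains why the error term is permitted to grow linearly in $\beta$ --- since $f$ and the $f_M$ themselves grow linearly in $\beta$ as $\beta \to \infty$ (with $f(\beta) = \sqrt{2\log 2}\,\beta$ in that regime), a bound on $\abs{f_M - f}$ that is uniform over all of $[0,\infty)$ is not available, so a linear-in-$\beta$ error is the natural formulation.
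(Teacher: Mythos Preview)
Your proof is correct and follows essentially the same approach as the paper: both use the identity $\abs{f_M(\beta)-f(\beta)} = \beta\,\abs{g_M(\beta)-g(\beta)}$ together with the uniform convergence $g_M\to g$ from Lemma~\ref{lem:prop of g_M}\eqref{g_M -> g uniformly}. Your version is slightly more detailed in that you handle the case $\beta=0$ explicitly, which the paper leaves implicit.
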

\begin{proof}
By \eqref{g_M -> g uniformly} of Lemma~\ref{lem:prop of g_M}, $g_M$ converges uniformly to $g$ on $[0,\infty)$. Combining this with the fact
\begin{align*}
\abs{f_M(\beta)-f(\beta)} = \beta \abs{g_M(\beta)-g(\beta)}.
\end{align*}
the proof is completed.
\end{proof}

We now proceed to the proof of Lemma~\ref{lem:sandwich}.

\begin{proof}[Proof of Lemma~\ref{lem:sandwich}]
Fix $M_N$ a sequence such that $M_N\in\llbracket 1,N\rrbracket$ and $M_N\rightarrow\infty$.

For all $N\in\N$, by Kahane's inequality (see, Theorem 3.11 in \cite{ledouxProbabilityBanachSpaces1991}), for all $k\in\llbracket 0,\floor{N/M_N}\rrbracket$, 
\begin{align}
\EX{\log \tilde{Z}_{\beta\sqrt{a}^-_k,M_N}}
\leq \EX{\log Z^{(kM_N)}_{\beta,M_N}}
\leq \EX{\log \tilde{Z}_{\beta\sqrt{a}^+_k,M_N}},
\label{eq:first sandwich}
\end{align}
where
\begin{align*}
a_k^- \coloneqq \essinf_{t\in [\frac{kM_N}{N},\frac{(k+1)M_N}{N}]} a(t)
\quad \text{and}
\quad a_k^+ \coloneqq \esssup_{t\in [\frac{kM_N}{N},\frac{(k+1)M_N}{N}]} a(t).
\end{align*}

Now, fix $\varepsilon>0$. By Lemma~\ref{lem:uniform}, there exists $N_0\in\N$ such that for all $N\geq N_0$,
\begin{align}
\frac{1}{M_N} \EX{\log \tilde{Z}_{\beta\sqrt{a}^-_k,M}}
\geq f(\beta\sqrt{a}^-_k) - \varepsilon\beta\sqrt{a}^-_k
\quad
\text{and}
\quad
\frac{1}{M_N} \EX{\log \tilde{Z}_{\beta\sqrt{a}^+_k,M}}
\leq f(\beta\sqrt{a}^+_k) + \varepsilon\beta\sqrt{a}^+_k. \label{eq:approx ineq}
\end{align}
Combining \eqref{eq:first sandwich} and \eqref{eq:approx ineq}, for all $N\geq N_0$, we conclude that 
\begin{align*}
\EX{\log Z^{(kM_N)}_{\beta,M_N}}
\geq
\EX{\log \tilde{Z}_{\beta\sqrt{a}^-_k,M_N}}
\geq
f(\beta\sqrt{a}^-_k) - \varepsilon\beta\sqrt{a}^-_k
\end{align*}
and 
\begin{align*}
\EX{\log Z^{(kM_N)}_{\beta,M_N}}
\leq \EX{\log \tilde{Z}_{\beta\sqrt{a}^+_k,M_N}}
\leq f(\beta\sqrt{a}^+_k) + \varepsilon\beta\sqrt{a}^+_k.
\end{align*}
These complete the proof.
\end{proof}

\paragraph{Acknowledgments.} I want to thank Pascal Maillard for his guidance throughout the whole project. I am also grateful to Alexandre Legrand and Michel Pain for stimulating discussions.

\printbibliography
\end{document}